\title{
C*-envelopes of tensor algebras arising from stochastic matrices
}
\keywords{C*-envelope; boundary representations; classification; Cuntz-Pimsner algebra; stochastic matrix}
\subjclass[2000]{Primary: 47L30, 46L55, 46L35. Secondary: 46L80, 60J10}
\author{Adam Dor-On}
\address{Adam Dor-On, Pure Mathematics Department, University of Waterloo, Waterloo, ON, Canada N2L 3G1}
\email{adoron@uwaterloo.ca}
\author[Daniel Markiewicz]{Daniel Markiewicz}
\address{Daniel Markiewicz,
Department of Mathematics,
Ben-Gurion University of the Negev, 
P.O.B. 653, Be'er Sheva 8410501,
Israel.} 
\email{danielm@math.bgu.ac.il}
\date{\today}
\thanks{This work was partially supported by the ISF with the ISF-UGC joint research program framework (grant No. 1775/14). The work of the first author was also partially supported by an Ontario Trillium Scholarship. 
}
\newtheorem{theorem}{Theorem}[section]
\newtheorem{prop}[theorem]{Proposition}
\newtheorem{cor}[theorem]{Corollary}
\newtheorem{lemma}[theorem]{Lemma}
\newtheorem{remark}[theorem]{Remark}
\newtheorem{notation}[theorem]{Notation}
\theoremstyle{definition}
\newtheorem{defi}[theorem]{Definition}
\newtheorem{example}[theorem]{Example}
\numberwithin{equation}{section}
\newcommand{\nn}{\mathbb{N}}
\newcommand{\cc}{\mathbb{C}}
\newcommand{\torus}{\mathbb{T}}
\renewcommand{\epsilon}{\varepsilon}
\DeclareMathOperator{\Ker}{Ker}
\DeclareMathOperator{\Prod}{Prod}
\newcommand{\Ran}{\mathop{\mathrm{Ran}}\nolimits}
\newcommand{\ind}{\mathop{\mathrm{ind}}\nolimits}
\newcommand{\Rmnum}[1]{\expandafter\@\romancap\romannumeral #1@}
\newcommand{\la}{\langle}
\newcommand{\ra}{\rangle}
\newcommand{\St}{\Omega}
\newcommand{\tensor}{\mathcal{T}_+}
\newcommand{\toeplitz}{\mathcal{T}}
\newcommand{\cuntz}{\mathcal{O}}
\DeclareMathOperator{\Diag}{Diag}
\newcommand{\cala}{\mathcal{A}}
\newcommand{\calb}{\mathcal{B}}
\newcommand{\calc}{\mathcal{C}}
\newcommand{\cenv}{C^*_\mathrm{env}}
\newcommand{\LL}{\mathcal{L}}
\newcommand{\JJ}{\mathcal{J}}
\newcommand{\FF}{\mathcal{F}}
\newcommand{\KK}{\mathcal{K}}
\newcommand{\QQ}{\mathcal{Q}}
\newcommand{\Prim}{Prim}
\DeclareMathOperator{\Ext}{Ext}
\begin{document}

\begin{abstract}
In this paper we study the C*-envelope of the (non-self-adjoint) tensor algebra associated via subproduct systems to a finite irreducible stochastic matrix $P$.

Firstly, we identify the boundary representations  of the tensor algebra inside the Toeplitz algebra,
also known as its non-commutative Choquet boundary.  As an application, we provide examples of C*-envelopes that are not *-isomorphic to either the Toeplitz algebra or the Cuntz-Pimsner algebra.  This characterization required a new proof for the fact that the Cuntz-Pimsner algebra associated to $P$ is isomorphic to $C(\mathbb{T}, M_d(\cc))$, filling a gap in a previous paper.

We then proceed to classify the C*-envelopes of tensor algebras of stochastic matrices up to *-isomorphism and stable isomorphism, in terms of the underlying matrices. This is accomplished by determining the K-theory of these C*-algebras and by combining this information with results due to Paschke and Salinas in extension theory. This classification is applied to provide a clearer picture of the various C*-envelopes that can land between the Toeplitz and the Cuntz-Pimsner algebras.
\end{abstract}

\maketitle

\section*{Introduction}

Given a C*-correspondence $E$, the operator algebras associated to shift operators (also called creation operators) over the Fock correspondence $\mathcal{F}(E)$ have been the subject of considerable attention by too many researchers to appropriately list here.
By an \emph{operator algebra} in this paper we mean a (not necessarily self-adjoint) closed unital subalgebra $\cala$ of a unital $C^*$-algebra $\calb$. The operator algebra generated by the shifts in $\mathcal{L}(\mathcal{F}(E))$ is called the tensor algebra $\tensor(E)$, and it provides a very successful prototype for the study of operator algebras. It is closely related to the Toeplitz algebra $\toeplitz(E)$, which is the C*-algebra generated by the shifts, and its celebrated quotient, the Cuntz-Pimsner algebra $\cuntz(E)$.

Analogously, given a a subproduct system $X$ in the sense of Shalit and Solel~\cite{shalit-solel} of C*-correspondences over a C*-algebra $A$ parametrized by $\nn$, one obtains the operator algebras associated to shifts on $\mathcal{F}(X)$: the tensor algebra $\tensor(X)$, the Toeplitz algebra $\toeplitz(X)$ and the Cuntz-Pimsner algebra $\cuntz(X)$, where the latter was defined in \cite{viselter2}.
 This new framework generalizes the previous one, in the sense that a C*-correspondence $E$ gives rise to a \emph{product} system $X$ whose Fock correspondence and associated operator algebras are precisely the ones discussed in the previous paragraph. 

There has been important work on the operator algebras arising from subproduct systems over $\cc$, or equivalently, the special case of subproduct systems whose C*-correspondence fibers are actually \emph{Hilbert spaces}, see for example \cite{shalit-solel, davidson-ramsey-shalit, Kakariadis-Shalit}. In our previous paper \cite{dor-on-markiewicz}, we turned to the simplest case for which the fibers of the subproduct system are not Hilbert spaces. Namely, we considered the case of subproduct systems of C*-correspondences over $\ell^\infty(\Omega)$ when $\Omega$ is countable with more than one point.  Such a subproduct system and its associated operator algebras are conveniently parametrized by a stochastic matrix $P$ over the state space $\Omega$. In \cite{dor-on-markiewicz}, we considered isomorphism problems of the tensor algebras associated to stochastic matrices, via these subproduct systems. 

In this paper we focus on the \emph{C*-envelope} of the tensor algebra associated to finite irreducible stochastic matrices. Recall that given an operator algebra $\cala$, a C*-cover is a pair $(\calc,\iota)$ where $\calc$ is a C*-algebra and  $\iota: \cala \to \calc$ is a unital completely isometric homomorphism, such that $C^*(\iota(\cala)) = \calc$. A C*-cover is called the \emph{C*-envelope} for $\cala$ if for any other C*-cover $(\calc', \iota')$, the map $\iota'(a) \mapsto \iota(a)$ extends uniquely to a surjective *-homomorphism $\calc' \rightarrow \calc$. In this precise sense, the C*-envelope is the smallest C*-algebra which contains a completely isometric copy of $\cala$, and usually the algebra $\calc$ is denoted $\cenv(\cala)$ and
 the map $\iota$ is suppressed.

The existence of the C*-envelope of an operator algebra was first proven by Hamana~\cite{hamana-c-env}, by way of proving the existence of an injective envelope for operator systems. An alternative proof via dilation theory was found by Dritchel and McCullough in \cite{Dritschel-McCullough}. This new idea allowed Arveson~\cite{arveson-non-commutative-choquet-I} to follow
 the original strategy he envisioned in \cite{arv-subalg1, arv-subalg2}
 to prove the existence of the C*-envelope via boundary representations in the separable case. Davidson and Kennedy finally realized Arveson's vision in full in \cite{Davidson-Kennedy} by providing a proof without the assumption of separability.

We are motivated in this paper by the known results in the determination of the C*-envelope of the tensor algebra of a subproduct system: 

\begin{enumerate}
	\item
	 Given a C*-correspondence $E$, we have that 
	   $\cenv(\tensor(E)) = \cuntz(E)$. This was first proven by 
	 Muhly and Solel \cite[Corollary 6.6]{muhly-solel-tensor} when  the left action on $E$ is faithful, essential and acts by compacts, and in the general case (without extra assumptions) by
	 Katsoulis and Kribs \cite{katsoulis-kribs-C-envelope}.
	
	\item
	Let $I$ be a homogeneous ideal in the ring of polynomials in finitely many commuting variables. The universal commuting row contraction subject to the polynomial relations in $I$ gives rise to a subproduct system of Hilbert spaces $X^I$, and it was
	shown by Davdison, Ramsey and Shalit \cite{davidson-ramsey-shalit} that $\cenv(\tensor(X^I)) = \toeplitz(X^I)$.

	\item
	Let $I$ be a monomial ideal in the ring of polynomials in non-commuting variables. Similarly to the commutative case, a subproduct system $X^I$ can be defined. Kakariadis and Shalit \cite{Kakariadis-Shalit} have shown that for many cases (depending on the monomial ideal) either $\cenv(\tensor(X^I)) = \toeplitz(X^I)$  or $\cenv(\tensor(X^I)) = \cuntz(X^I)$. 
\end{enumerate}

In summary, for all these cases, when the subproduct system $X$ is irreducible in the appropriate sense (i.e. no nontrivial reducing projections, see Definition~\ref{rem:sps-reducing-proj}) , $\cenv(\tensor(X))$ has been found to be isomorphic either to $\toeplitz(X)$ or $\cuntz(X)$.  
In \cite[Section 6]{viselter2}, this phenomenon was observed, and it was asked if one can find a general description for the behavior of C*-envelopes of tensor algebras associated with subproduct systems. In this paper we shed some light on this question: we show that the evidence for the dichotomy witnessed above is misleading, and that the situation is more mysterious than previously thought, by providing an example of stochastic matrix with subproduct system X such that the 
$\cenv(\tensor(X))$ is not *-isomorphic to either $\toeplitz(X)$ or $\cuntz(X)$ (See Example \ref{ex:distinct}).

Our first main result is as follows. Let $P$ be a finite irreducible stochastic matrix. In this case we show that the C*-envelope
lands between the Toeplitz and Cuntz-Pimsner algebras in the sense that it fits in the following sequence of quotient maps:
\begin{equation}\tag{$\ast$}\label{quotient-sequence}
\toeplitz(P) \longrightarrow \cenv(\tensor(P)) \overset{\pi_P}{\longrightarrow} \cuntz(P)
\end{equation}
and in fact $\tensor(P)$ is hyperrigid inside $\cenv(\tensor(P))$ (See \cite{arveson-non-commutative-choquet-II}). Moreover, in the case when $P$ has the multiple arrival property (see Definition~\ref{def:multiple-arrival}),  we identify the boundary representations of $\tensor(P)$ inside $\toeplitz(P)$, also known as the non-commutative Choquet boundary in the sense of Arveson~\cite{arveson-non-commutative-choquet-I}. This enables us to describe the C*-envelope $\cenv(\tensor(P))$ in terms of boundary representations. For details see Corollary~\ref{cor:multiple-arrival-boundary} and Theorem~\ref{theorem:shilov-ideal-hyperrigid}.

The fact that the Cuntz-Pimsner algebra $\cuntz(X)$ as defined by Viselter~\cite{viselter2} is not always isomorphic to the C*-envelope of the tensor algebra $\tensor(X)$ in the subproduct system case, and even a dichotomy as above fails to hold, suggests that perhaps an alternative definition of Cuntz-Pimsner algebra for subproduct systems is desirable.

The concrete description of the C*-envelope and lack of dichotomy lead to an unexpected richness of possibilities. Our second main result concerns the classification of C*-envelope up to *-isomorphism and stable isomorphism, so as to clarify the situation. For a finite irreducible stochastic matrix $P$ over $\Omega^P$, the ideal $\Ker( \pi_P)$ in the sequence \eqref{quotient-sequence} is *-isomorphic to a direct sum of $n_P \leq |\Omega^P|$ copies of the algebra of compact operators. Given two irreducible stochastic matrices $P$ and $Q$ over finite state sets $\Omega^P$ and $\Omega^Q$ we have that

\begin{enumerate}
	\item
	$\tensor(P)$ and $\tensor(Q)$ have stably isomorphic C*-envelopes if and only if $n_P=n_Q$. For more details see Theorem \ref{thm:stable-iso}.
	
	\item
	$\tensor(P)$ and $\tensor(Q)$ have *-isomorphic C*-envelopes if and only if $|\Omega^P| = |\Omega^Q|$, $n_P = n_Q$ and up to a reordering of $\Omega^Q$, the matrices $P$ and $Q$ have the same column nullity in every column. For more details, see 
	Definition~\ref{def:column-nullity} and Theorem~\ref{thm:*-iso}.
\end{enumerate}
Therefore, we see that instead of a dichotomy, we actually have a profusion of possibilities.

These results are obtained by leveraging the surprisingly simple form
of the Cuntz-Pimsner algebra as obtained in \cite[Corollary 5.16]{dor-on-markiewicz} to compute the K-theory of $\cenv(\tensor(P))$.
However, in our case K-theory does not completely resolve the issue by itself, and we then use extension theory (especially work by Paschke and Salinas \cite{Paschke-Salinas}) to complete the task. We should note that 
Dilian Yang pointed out to us that there was a gap in the proof of 
\cite[Corollary 5.16]{dor-on-markiewicz}, which we resolve in Section \ref{Sec:Cuntz-Pimsner} of this paper.

Finally, its natural to ask about the relationship between 
$\cenv(\tensor(P))$ and the graph algebra $\cuntz_{Gr(P)}$ associated to the unweighted directed graph obtained from a finite irreducible stochastic matrix  $P$. We apply our classification results for the C*-envelope and K-theory for graph algebras to show that these two objects are generally incomparable in the sense that we exhibit $3 \times 3$ irreducible stochastic matrices $P$, $Q$ and $R$ such that
$$
\cenv(\tensor(P)) \not \sim \cenv(\tensor(Q)) \cong \cenv(\tensor(R)) \qquad \text{and} \qquad \cuntz_{Gr(P)} \cong \cuntz_{Gr(Q)} \not \sim \cuntz_{Gr(R)}
$$
where $\cong$ stands for *-isomorphism and $\sim$ stands for stable isomorphism.

This paper has six sections. In Section \ref{Sec:Prelim} we give some preliminary background required from \cite{dor-on-markiewicz} and on extension theory. In Section \ref{Sec:Cuntz-Pimsner} we fill the gap pointed out to us by Dilian Yang in the proof of \cite[Corollary 5.16]{dor-on-markiewicz} and compute the extension groups for the Cuntz-Pimsner algebra of a finite irreducible stochastic matrix. In Section \ref{Sec:Choquet} we determine the non-commutative Choquet boundary of $\tensor(P)$ inside $\toeplitz(P)$, which then allows us to compute C*-envelopes $\cenv(\tensor(P))$ associated to finite irreducible stochastic matrices. In Section \ref{Sec:K-theory} we compute the K-theory of $\cenv(\tensor(P))$ in terms of boundary representations. Finally, in Section \ref{Sec:Classification} we prove our main classification results mentioned above, and compare $\cenv(\tensor(P))$ and $\mathcal{O}_{Gr(P)}$ as invariants for the graph of $P$.

\section{Preliminaries} \label{Sec:Prelim}

\subsection*{Boundary representations and Shilov ideal}

Suppose that $\cala$ is a unital operator algebra, and $(\calb,\iota)$ is a C*-cover. A unital completely contractive (c.c.) map $\phi :\iota(\cala) \rightarrow B(\mathcal{H})$ has the \emph{unique extension property} if there exists a unique unital completely positive (c.p.) extension $\widetilde{\phi} : \calb \rightarrow B(H)$ which is also a *-representation. We will say that a unital *-representation $\rho : \calb \rightarrow B(\mathcal{H})$ is a \emph{boundary representation} for $\cala$ if $\rho$ is irreducible and $\rho \upharpoonright _{\iota(\cala)}$ has the unique extension property.

For a unital c.c. map $\phi : \cala \rightarrow B(\mathcal{H})$, we say that a unital c.c. map $\phi' : \cala \rightarrow B(\mathcal{H}')$ is a \emph{dilation} of $\phi$ if there is an isometry $V: \mathcal{H} \rightarrow \mathcal{H}'$ such that for any $a\in \cala$ we have $\phi(a) = V^*\phi'(a)V$. We will call a unital c.c. map $\phi : \iota(\cala) \rightarrow B(\mathcal{H})$ \emph{maximal} if whenever $\phi'$ is a dilation of $\phi$, then $\phi' = \phi \oplus \psi$ for some unital c.c. map $\psi$. It turns out that for a unital c.c. map $\phi : \iota(\cala) \rightarrow B(\mathcal{H})$ we have that $\phi$ is maximal if and only if $\phi$ has the unique extension property \cite[Proposition 2.2]{Arveson-note}, and that maximality is invariant under change of C*-cover \cite[Proposition 3.1]{Arveson-note}.

Thus, for the definitions of the unique extension property and maximality, it makes no difference which C*-cover $(\calb, \iota)$ we work inside.

Next, for a unital operator algebra $\cala$ and $(\calb,\iota)$ a C*-cover, an ideal $\mathcal{I}$ of $\calb$ is called a boundary ideal for $\cala$ if the canonical quotient map $q_{\mathcal{I}} : \calb \rightarrow \calb / \mathcal{I}$ is completely isometric on $\iota(\cala)$. The Shilov ideal $\mathcal{S}_{\cala}$ of $\cala$ in $\calb$ is the largest boundary ideal.

The Shilov ideal is a tractable tool for finding the C*-envelope, since $\calb / \mathcal{S}_{\cala}$ must then be the C*-envelope of $\cala$ (See \cite[Proposition 1.9]{kakariadis-sys-env}). However, there is a way to compute the C*-envelope from boundary representations. By the theorem of Davidson and Kennedy from \cite{Davidson-Kennedy}, the boundary representations of every unital operator systems completely norm it, so that by \cite[Theorem 2.2.3]{arv-subalg1} we have that the Shilov ideal is the intersection of all kernels of boundary representations.

In \cite{arveson-non-commutative-choquet-II} Arveson investigated a closely related notion for C*-covers called hyperrigidity. For a unital operator algebra $\cala$ and $(\calb,\iota)$ a C*-cover for it, one of the equivalent formulations for hyperrigidity of $(\calb,\iota)$ is that for \emph{any} $*$-representation $\pi : \calb \rightarrow B(H)$, the restriction $\pi |_{\iota(\cala)}$ has the unique extension property.

Suppose now that $\cala$ is hyperrigid in $(\calb, \iota)$. Then any irreducible *-representation of $\calb$ must be a boundary representation with respect to $\iota(A)$, so by taking the direct sum $\rho$ of all irreducible representations of $\calb$, by the above we have that the Shilov ideal of $\iota(\cala)$ in $\calb$ is trivial. This means that the C*-envelope of $\cala$ is $\calb$. Hence, when we know that a C*-cover is hyperrigid, this C*-cover must then be the C*-envelope for our operator algebra. By invariance of C*-envelope, we see that up to *-isomorphism, $\cala$ can only be hyperrigid in at most one C*-algebra.

We will often suppress the notation for the C*-cover that we use, and in many cases think of $\cala$ as a subalgebra of some particular $B(H)$.

\subsection*{Hilbert modules and subproduct systems}

We assume that the reader is familiar with the basic theory of Hilbert C*-modules, which can be found in \cite{Pasc, lance-book, Manu-troi-book}.
We only give a quick summary of basic notions and terminology as we go, so as to clarify our conventions.

Let $\cala$ be a C*-algebra and $E$ a Hilbert C*-module over $\cala$. We denote by $\mathcal{L}(E)$ the collection of adjointable operators on $E$. If in addition $E$ has a left $\cala$-module structure given by a *-homomorphism $\phi : \mathcal{A} \rightarrow \mathcal{L}(E)$, we call $E$ a Hilbert C*-correspondence over $\cala$. We often suppress notation and write $a \cdot \xi : =\phi(a)\xi$.

If $E$ is a C*-correspondence over $\cala$ with left action $\phi$, and $F$ is a C*-correspondence over $\cala$ with left action $\psi$, then on the algebraic tensor product $E\otimes_{alg}F$ one defines an $\cala$-valued pre-inner product satisfying $\la x_1 \otimes y_1 , x_2 \otimes y_2 \ra = \la y_1 , \psi(\la x_1 , x_2 \ra) y_2 \ra$ on simple tensors. The usual completion process with respect to the norm induced by this inner product, yields the internal Hilbert C*-module tensor product of $E$ and $F$, denoted by $E\otimes F$ or $E\otimes_{\psi}F$, which is a C*-correspondence over $\cala$ with left action $\phi \otimes Id_{F}$.

The following is the C*-algebraic version of \cite[Definition 1.1]{shalit-solel} for the semigroup $\mathbb{N}$, which was also given in \cite[Definition 1.4]{viselter1}.

\begin{defi}
Let $\cala$ be a C*-algebra, let $\{ X_n \}_{n\in \nn}$ be a family of Hilbert C*-correspondences over $\cala$ and let $U = \{ U_{n,m} : X_n \otimes X_m \rightarrow X_{n+m} \}$ be a family of bounded bimodule maps. We will say that $(X,U)$ is a subproduct system over $\cala$ if the following conditions are met:
\begin{enumerate}
\item
$X_0 = \cala$
\item
The maps $U_{0,n}$ and $U_{n,0}$ are given by the left and right actions of $\cala$ on $X_n$ respectively
\item
$U_{n,m}$ is an \emph{adjointable} coisometric map for every $n,m\in \nn$
\item
For every $n,m\in \nn$ we have the associativity identity
$$
U_{n+m,p}(U_{n,m}\otimes Id_{X_p}) = U_{n,m+p}(Id_{X_n}\otimes U_{m,p})
$$
\end{enumerate}
In case the maps $U_{n,m}$ are unitaries, we say that $X$ is a \emph{product system}.
\end{defi}

\subsection*{Operator algebras associated to subproduct systems}

We describe the construction of the tensor, Toeplitz and Cuntz-Pimsner algebras arising from subproduct systems (see \cite{viselter1, viselter2}). 

Let $(X,U)$ be a subproduct system over a C*-algebra $\cala$. There is a canonical \emph{product system} containing $(X,U)$ as a subproduct subsystem as follows (See \cite[Definition 5.1 \& Proposition 5.2]{shalit-solel}).

We define $E := X_1$, so that $\Prod(X) : = \{ E^{\otimes n} \}_{n\in \mathbb{N}}$ constitutes a product systems where the unitaries from $E^{\otimes n} \otimes E^{\otimes m}$ to $E^{\otimes n+m}$ are the usual associativity unitaries.

One can then construct canonical \emph{adjointable} coisometries $V_n : E^n \rightarrow X_n$ which, by associativity of $U = \{U_{n,m}\}$, are uniquely determined inductively by the equations $V_1 = Id_{X_1}$ and $V_{n+m} = U_{n,m} \circ (V_n \otimes V_m)$.

The $X$-Fock correspondence is the C* - correspondence direct sum of the fibers of the subproduct system

\begin{equation} \label{eq:fock-space}
\FF_X : = \bigoplus_{n\in \mathbb{N}}X_n
\end{equation}

Denote by $Q_n \in \LL(\FF_X)$ the projection of $\FF_X$ onto the $n$-th fiber $X_n$, and define $Q_{[0,n]} = Q_0 + ... + Q_n$, and $Q_{[n,\infty)} : = Id_{\FF_X} - Q_{[0,n-1]}$. We then obtain an adjointable coisometric map $V : \FF_{\Prod(X)} \rightarrow \FF_X$ given by $V = \oplus_{n=0}^{\infty}V_n$

The $X$-shifts are the operators $S_{\xi}^{(n)} \in \LL(\FF_X)$ uniquely determined between fibers by $S_{\xi}^{(n)}(\eta) : = U_{n,m}(\xi \otimes \eta)$ where $n,m \in \mathbb{N}$ and $\xi \in X_n$, $\eta \in X_m$.

We note that $S^{(n)}_{\xi} = V S^{(n)}_{V_n^*(\xi)} V^*$, so that $S_{\xi}^{(n)}$ is adjointable with adjoint given by $S^{(n)*}_{\xi} = V S^{(n)*}_{V_n^*(\xi)} V^*$, where $S^{(n)}_{V_n^*(\xi)}$ is a product system shift and is hence an adjointable operator in $\LL(\FF_{\Prod(X)})$.

\begin{defi}
The tensor and Toeplitz algebras are the non-self-adjoint and self-adjoint subalgebras of $\LL(\FF_X)$ generated by a copy of $\cala$ and all $X$-shifts respectively,
$$
\tensor(X) : = \overline{Alg}(\cala \cup \{S^{(n)}_{\xi} | \xi \in X_n , n\in \mathbb{N} \})
$$
$$
\toeplitz(X) : = C^*(\cala \cup \{S^{(n)}_{\xi} | \xi \in X_n , n\in \mathbb{N} \})
$$
\end{defi}

\begin{remark} \label{rem:fock-W*-vs-C*-sum}
When a subproduct system is comprised of $W^*$-correspondences, since each $S_{\xi}^{(n)}$ is adjointable, the last part of \cite[Proposition 2.14]{dor-on-markiewicz} allows us to take the $W^*$-correspondence (weak) direct sum of fibers as our Fock space in equation \eqref{eq:fock-space}, and get that the operator algebras $\toeplitz(X)$ and $\tensor(X)$ are the same as those considered in \cite[Definition 4.1 \& Definition 6.1]{dor-on-markiewicz}.
\end{remark}

The algebra $\LL(\FF_X)$ admits a natural action $\alpha$ of the unit circle $\mathbb{T}$ called the gauge action, defined by $\alpha_{\lambda}(T) = W_{\lambda} T W_{\lambda}^*$ for all $\lambda \in \mathbb{T}$ where $W_{\lambda} : \FF_X \rightarrow \FF_X$ is the unitary defined by 
$$
W_{\lambda}(\oplus_{n=0}^{\infty}\xi_n) = \oplus_{n=0}^{\infty}\lambda^n \xi_n
$$

Since $\alpha_{\lambda}(S_{\xi}^{(n)}) = S_{\lambda^n \xi}^{(n)}$ , we see that the algebras $\tensor(X)$ and $\toeplitz(X)$ are $\alpha$-invariant closed subalgebras, and so the action restricts to them, and we shall still denote it by $\alpha$.

The circle action on $\toeplitz(X)$ then enables the definition of a faithful conditional expectation $\Phi$ given by $\Phi(S) = \int_{\mathbb{T}}\alpha_{\lambda}(T)d\lambda$ where $d\lambda$ is normalized Haar measure on $\mathbb{T}$.

One then defines $\toeplitz(X)_k$ to be the closure of all homogeneous polynomials of degree $k$ (see \cite[Definition 4.5]{dor-on-markiewicz}), which then coincides with the collection of operators $T\in \toeplitz(X)$ satisfying $\alpha_{\lambda}(T) = \lambda^k T$ as shown in \cite[Corollary 4.6]{dor-on-markiewicz}. This makes both $\toeplitz(X)$ and $\tensor(X)$ into $\mathbb{Z}$-graded and $\mathbb{N}$-graded algebras respectively, and $\Phi$ on $\toeplitz(X)$ and $\tensor(X)$ is then onto $\toeplitz(X)_0$ and $\mathcal{A}$ respectively.

Another algebra associated to the subproduct system arises as a special quotient of $\toeplitz(X)$. The subset $\JJ \subset \LL(\FF_X)$ given by
$$
\JJ = \{ \ T \in \LL(\FF_X) \ | \ \lim_{n\rightarrow \infty} \| TQ_n \| = 0 \ \}
$$
is a closed $\alpha$-invariant left ideal inside $\LL(\FF_X)$ according to \cite[Proposition 4.8]{dor-on-markiewicz}. It was proven by Viselter in \cite[Theorem 2.5]{viselter2} that $\JJ(\toeplitz(X)): = \JJ \cap \toeplitz(X)$ is a closed two sided ideal.

\begin{defi}
Let $(X,U)$ be a subproduct system. Define the Cuntz-Pimsner ideal of $\toeplitz(X)$ to be $\JJ(\toeplitz(X)) : = \JJ \cap \toeplitz(X)$, and the Cuntz-Pimsner algebra of $X$ is then $\cuntz(X):= \toeplitz(X) / \JJ(\toeplitz(X))$.
\end{defi}

We note that the circle action on $\toeplitz(X)$ passes naturally to $\cuntz(X)$ since $\JJ(\toeplitz(X))$ is gauge invariant, and the fixed point algebras are then $\toeplitz(X)_0$ and $\cuntz(X)_0$ respectively.

We shall later need the following formula for the norm of an element in the quotient $M_s(\cuntz(X))$, in terms of representatives in $M_s(\toeplitz(X))$. Denote by $q: \toeplitz(X) \rightarrow \cuntz(X)$ the canonical quotient map. When $Q_n \in \toeplitz(X)$, it follows from item (1) of \cite[Theorem 3.1]{viselter2} that $\{I_s \cdot Q_{[0,m]}\}$ is an approximate identity for $M_s(\JJ(\mathcal{T}(X)))$, one may then invoke \cite[Exercise 1.8.C]{arv-book} to obtain the following.

\begin{prop} \label{prop:complete-norm-cuntz}
Let $(X,U)$ be a subproduct system, and suppose that $Q_n \in \toeplitz(X)$ for all $n\in \nn$. Then for any $T = [T_{ij}] \in M_s(\toeplitz(X))$ we have
$$
\|q^{(s)}(T)\| = \lim_{m \rightarrow \infty}\|[T_{ij}Q_{[m,\infty)}] \|
$$
\end{prop}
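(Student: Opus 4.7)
The approach is exactly the one indicated by the parenthetical remark preceding the statement: bootstrap from an approximate identity for the Cuntz-Pimsner ideal to the standard C*-algebraic formula for quotient norms. I would organize this in three short steps.

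First, I would establish the key input: under the hypothesis $Q_n \in \toeplitz(X)$, the sequence of finite partial sums $\{Q_{[0,m]}\}_{m\in\nn}$ is a (contractive, increasing) approximate identity for $\JJ(\toeplitz(X))$. Each $Q_{[0,m]}$ is a projection in $\toeplitz(X)$, and if $T \in \JJ(\toeplitz(X))$ then $T - T Q_{[0,m]} = T Q_{[m+1,\infty)}$, whose norm tends to $0$ by item (1) of \cite[Theorem 3.1]{viselter2} applied together with the definition of $\JJ$. This is where the hypothesis is really used: without $Q_n \in \toeplitz(X)$, the natural candidates for the approximate identity would not lie in the ambient C*-algebra.

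Second, I would amplify this to matrices. A standard and routine check shows that if $(e_\lambda)$ is an approximate identity for an ideal $\calI \subset \calc$, then $(I_s \cdot e_\lambda)$ is an approximate identity for $M_s(\calI) \subset M_s(\calc)$, and moreover $M_s(\calc)/M_s(\calI) \cong M_s(\calc/\calI)$ with quotient map $q^{(s)}$. Applied to our situation, this yields an approximate identity $\{I_s \cdot Q_{[0,m]}\}$ for $M_s(\JJ(\toeplitz(X)))$ inside $M_s(\toeplitz(X))$.

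Finally, I would invoke \cite[Exercise 1.8.C]{arv-book}, which states that for any element $c$ in a C*-algebra $\calc$ with an ideal $\calI$ admitting an approximate identity $(e_\lambda)$, one has the quotient norm formula $\|q(c)\| = \lim_\lambda \|c - c e_\lambda\|$. Applied to $T = [T_{ij}] \in M_s(\toeplitz(X))$ and the approximate identity $\{I_s \cdot Q_{[0,m]}\}$, and using $I_s - I_s \cdot Q_{[0,m]} = I_s \cdot Q_{[m+1,\infty)}$, we obtain
$$
\|q^{(s)}(T)\| = \lim_{m \to \infty} \|T - T(I_s \cdot Q_{[0,m]})\| = \lim_{m \to \infty} \|[T_{ij} Q_{[m+1,\infty)}]\|,
$$
which is the same as $\lim_{m \to \infty} \|[T_{ij} Q_{[m,\infty)}]\|$ after a harmless reindexing. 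There is no genuine obstacle here; the only subtlety is the verification that the partial-sum projections indeed form an approximate identity for $\JJ(\toeplitz(X))$, and this is precisely what Viselter's theorem provides.
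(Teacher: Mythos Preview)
Your proposal is correct and follows precisely the approach indicated in the paper: the sentence immediately preceding the proposition already sketches the proof as ``it follows from item (1) of \cite[Theorem 3.1]{viselter2} that $\{I_s \cdot Q_{[0,m]}\}$ is an approximate identity for $M_s(\JJ(\mathcal{T}(X)))$, one may then invoke \cite[Exercise 1.8.C]{arv-book},'' which is exactly your three-step argument.
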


\subsection*{Cuntz-Pimsner algebras and subproduct systems arising from stochastic matrices}

In our previous paper~\cite{dor-on-markiewicz} we studied the tensor algebra $\tensor(P)$ associated to a certain subproduct system construction applied to a stochastic matrix $P$. This subproduct system construction can be applied to any unital normal completely positive map on a von-Neumann algebra, and is called the Arveson-Stinespring subproduct system construction.

After characterizing isomorphism classes for Arveson-Stinespring subproduct systems in terms of the underlying stochastic matrices, we used this characterization to study the dependence of the isomorphism classes of the algebra $\tensor(P)$ on the matrix $P$ (with respect to various concepts of isomorphism), which ended up coinciding with the respective isomorphism classes for the subproduct systems. 

We will now discuss some of the preliminaries and results in \cite{dor-on-markiewicz} for such subproduct systems and their Cuntz-Pimsner algebras. For the basic theory of stochastic matrices and Markov chains, we recommend \cite{seneta} and \cite[Chapter 6]{durrett}.

\begin{defi}
Let $\Omega$ be a countable set. A stochastic matrix is a function $P : \Omega \times \Omega \rightarrow \mathbb{R}_+$ such that for all $i\in \Omega$ we have $\sum_{j\in \Omega} P_{ij} = 1$. Elements of $\Omega$ are called \emph{states} of $P$.
\end{defi}

To every stochastic matrix, one can associate a set of edges $E(P) := \{ \ (i,j) \ | \ P_{ij}>0 \ \}$ and a $\{0,1\}$ - matrix $Gr(P)$ representing the directed graph of $P$ as an incidence matrix by way of 
\begin{displaymath}   
Gr(P)_{ij} = \left\{     
\begin{array}{lr}       
1 & : P_{ij} > 0 \\       
0 & : P_{ij} = 0     
\end{array}   \right.
\end{displaymath}

Many dynamical properties of $P$ can be put in terms of the directed graph $(\Omega, E(P))$ of $P$.

\begin{defi}
Let $P$ be a stochastic matrix over $\Omega$. A path of length $\ell$ in $(\Omega, E(P))$, i.e. a path in the directed graph of $P$, is a function $\gamma : \{0,..., \ell \} \rightarrow \Omega$ such that $P_{\gamma(k)\gamma(k+1)}>0$ for every $0\leq k \leq \ell -1$. The path $\gamma$ is said to be a cycle if $\gamma(0) = \gamma(\ell)$. We will say that a state $i$ leads to a state $j$ if there is a path $\gamma$ (of some length $\ell$) as above with $\gamma(0) = i$ and $\gamma(\ell)= j$.
\end{defi}

We next give the main definitions that we shall use in the context of stochastic matrices in this paper.

\begin{defi}
Let $P$ be a stochastic matrix over $\Omega$, and let $i\in \Omega$.
\begin{enumerate}
\item
The \emph{period} of $i$ is $r(i) = gcd\{ \ n \ | \ P_{ii}^{(n)} > 0 \ \}$. If no finite such $r(i)$ exists, or if $r(i) = 1$ we say that $i$ is aperiodic.
\item
$P$ is said to be irreducible if for any pair $i,j \in \Omega$, we have that $i$ leads to $j$ (and so $j$ also leads to $i$).
\end{enumerate}
\end{defi}

If $P$ is an irreducible stochastic matrix over $\Omega$, then every state $i\in \Omega$ is of the same periodicity $r$, so we define the periodicity of $P$ to be $r$.

Let us recall the statement of the cyclic decomposition of irreducible stochastic matrices \cite[Theorem 1.3]{seneta} which justifies the notion of periodicity of an irreducible stochastic matrix $P$.
 
\begin{theorem} \label{theorem:cyclic-graph-decomposition}
(Cyclic decomposition for periodic irreducible matrices)\\
Let $P$ be an irreducible stochastic matrix over a state set $\St$ with period $r$, and let $\omega \in\Omega$.  For each $\ell=0, \dots r-1$, let
$\Omega_\ell = \{ j \in \Omega \mid P_{\omega j}^{(n)}>0 \implies  n\equiv \ell 
\mod r \}$. Then, 
\begin{enumerate}
\item
The family $(\Omega_\ell)_{\ell=0}^{r-1}$ is a partition of 
$\Omega$. 
\item
If $j\in \Omega_{\ell}$ then there exists $N(j)$ such that for all $n\geq N(j)$ we have $P^{(nr + \ell)}_{\omega j}>0$. 
\item
Up to re-enumeration of $\Omega$, there exist \emph{rectangular} stochastic matrices $P_0,...P_{r-1}$ such that $P$ has the 
following cyclic block decomposition:
\begin{equation*}
\left[\begin{smallmatrix}
       0 & P_0  & \cdots & 0 \\
       \vdots  & \ddots  & \ddots & \vdots  \\
       0 &  \cdots & 0 & P_{r-2} \\
       P_{r-1}  & \cdots & 0 & 0
     \end{smallmatrix} \right]
\end{equation*}
where the rows (columns) of $P_{\ell}$ in this matrix decomposition are indexed by $\Omega_{\ell}$ ($\Omega_{\ell+1}$ respectively) for all for $\ell \in \mathbb{Z}_{r}$, where $\mathbb{Z}_r$ is the cyclic group of order $r$.
\end{enumerate}
\end{theorem}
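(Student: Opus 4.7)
The plan is to follow the classical Frobenius/Seneta argument, which naturally splits along the three items of the statement.

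For item (1), I would first show that for each $j \in \St$, the set $R(j) := \{n \in \nn : P^{(n)}_{\omega j} > 0\}$ is contained in a single residue class modulo $r$. Picking $m$ with $P^{(m)}_{j \omega} > 0$ (available by irreducibility), the Chapman--Kolmogorov inequality gives $P^{(n+m)}_{\omega \omega} > 0$ for every $n \in R(j)$, so $n + m \in r\nn$ since $r$ is the common period of every state. Hence any two elements of $R(j)$ are congruent modulo $r$, and the class $\ell(j) \in \{0,\dots,r-1\}$ attached to $R(j)$ defines the partition $(\Omega_\ell)_{\ell=0}^{r-1}$.

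For item (2), the main input is the standard number-theoretic lemma that any additive subsemigroup of $\nn$ with gcd $d$ contains every sufficiently large multiple of $d$. Applied to $S := \{n : P^{(n)}_{\omega \omega} > 0\}$, whose gcd is $r$ by definition of period, this yields $N_0$ with $P^{(nr)}_{\omega \omega} > 0$ for all $n \geq N_0$. For $j \in \Omega_\ell$ and any fixed $k \in R(j)$ (so $k \equiv \ell \pmod r$), one then estimates $P^{(nr+k)}_{\omega j} \geq P^{(nr)}_{\omega \omega} P^{(k)}_{\omega j} > 0$ for $n \geq N_0$; writing $k = n_0 r + \ell$ and setting $N(j) := N_0 + n_0$ delivers the required claim after reindexing.

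For item (3), I would observe that if $i \in \Omega_\ell$ and $P_{ij} > 0$, then choosing $n \in R(i)$ with $n \equiv \ell \pmod r$ yields $n + 1 \in R(j)$ via Chapman--Kolmogorov, forcing $j \in \Omega_{\ell+1 \bmod r}$. Consequently every edge of the directed graph of $P$ runs from some $\Omega_\ell$ to $\Omega_{\ell+1}$, so re-enumerating $\St$ block by block according to $(\Omega_\ell)_{\ell \in \zz_r}$ produces exactly the displayed cyclic form. Each $P_\ell$ is rectangular stochastic because the rows of $P$ indexed by $\Omega_\ell$ already sum to $1$ and their nonzero entries sit entirely in the columns indexed by $\Omega_{\ell+1}$. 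The only genuinely non-elementary step is the number-theoretic lemma used in item (2); everything else is bookkeeping with Chapman--Kolmogorov and the definition of period.
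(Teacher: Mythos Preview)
Your argument is correct and is essentially the classical Frobenius--Seneta proof. Note, however, that the paper does not give its own proof of this theorem: it is stated as a background result with a citation to \cite[Theorem~1.3]{seneta}, so there is no in-paper proof to compare against beyond observing that your write-up matches the standard reference.
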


In this paper we shall restrict our attention to \emph{finite irreducible} stochastic matrices. For this class of stochastic matrices, we have that the more generally stated \cite[Theorem 2.10]{dor-on-markiewicz}, yields the following cleaner formulation, which is a combination of \cite[Theorem 4.1]{seneta} and \cite[Theorem 6.7.2]{durrett}.

\begin{theorem} \label{theorem:convergence-theorem-finite}
(Convergence theorem for finite irreducible matrices) \\
Let $P$ be a \emph{finite irreducible} stochastic matrix over $\Omega$ with period $r\geq 1$, and $\St_0,...,\St_{r-1}$ a cyclic decomposition for it as in item (3) of Theorem~\ref{theorem:cyclic-graph-decomposition}. There exists a unique probability vector $\nu =(\nu_i)_{i\in \Omega}$ so that when we are given $i\in \St_{l_{1}}$ and $j\in \St_{l_{2}}$, for $0 \leq \ell < r$
such that $\ell \equiv (l_2-l_1) \mod r$, we have that
$$ \lim_{m\rightarrow \infty}P^{(mr+\ell)}_{ij} = \nu_jr.$$
\end{theorem}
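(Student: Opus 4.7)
The plan is to reduce the periodic case to the classical aperiodic case by considering $P^r$ restricted to each cyclic class $\St_\ell$ and invoking the standard Perron--Frobenius convergence theorem for finite, irreducible, aperiodic stochastic matrices, i.e.\ the aperiodic specialization contained in \cite[Thm.~4.1]{seneta} or \cite[Thm.~6.7.2]{durrett}.

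First I would verify that for each $\ell \in \{0,\dots,r-1\}$ the principal submatrix $R_\ell := P^r\restrict_{\St_\ell \times \St_\ell}$ is stochastic, irreducible, and aperiodic. The partition property in Theorem~\ref{theorem:cyclic-graph-decomposition} shows that $P^r$ preserves each $\St_\ell$, so $R_\ell$ is indeed stochastic. Irreducibility follows because for any $i,j\in\St_\ell$, irreducibility of $P$ gives some $n$ with $P^{(n)}_{ij}>0$, and Theorem~\ref{theorem:cyclic-graph-decomposition} then forces $r\mid n$, so $(R_\ell)^{n/r}_{ij}>0$. Aperiodicity of $R_\ell$ follows because the return set $\{n:P^{(n)}_{ii}>0\}$ has gcd $r$ and is contained in $r\nn$, hence its image under division by $r$ has gcd $1$. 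The classical convergence theorem then yields a unique probability vector $\mu^{(\ell)}=(\mu^{(\ell)}_j)_{j\in\St_\ell}$ on $\St_\ell$ with $(P^r)^m_{ij}\to \mu^{(\ell)}_j$ for all $i,j\in\St_\ell$.

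For the mixed case, where $i\in\St_{l_1}$ and $j\in\St_{l_2}$ with $\ell\equiv l_2-l_1\pmod r$, I would write
\[
P^{(mr+\ell)}_{ij} \;=\; \sum_{k\in\St} P^{(\ell)}_{ik}\,(P^r)^m_{kj}.
\]
By Theorem~\ref{theorem:cyclic-graph-decomposition}, $P^{(\ell)}_{ik}=0$ unless $k\in\St_{l_2}$; and for $k\in\St_{l_2}$ the previous step gives $(P^r)^m_{kj}\to\mu^{(l_2)}_j$. Interchanging the limit with the finite sum and using $\sum_{k\in\St_{l_2}}P^{(\ell)}_{ik}=1$, I conclude $P^{(mr+\ell)}_{ij}\to \mu^{(l_2)}_j$. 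Setting $\nu_j := \mu^{(\ell)}_j/r$ for $j\in\St_\ell$ produces a probability vector on $\St$ (since $\sum_{\ell=0}^{r-1}\sum_{j\in\St_\ell}\mu^{(\ell)}_j/r = r\cdot(1/r)=1$) and yields the stated limit $\nu_j r$. Uniqueness is immediate because the entries of $\nu$ are determined by these limits. I do not anticipate a substantial obstacle: the only care required is the verification that $P^r$ is genuinely block-diagonal across the cyclic decomposition and that its diagonal blocks are aperiodic, both of which are already encoded in Theorem~\ref{theorem:cyclic-graph-decomposition} and the very definition of the period.
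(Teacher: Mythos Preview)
The paper does not give a proof of this theorem; it simply quotes it as a standard result, describing it as ``a combination of \cite[Theorem 4.1]{seneta} and \cite[Theorem 6.7.2]{durrett}.'' Your argument is correct and is precisely the standard reduction one finds in those references: restrict $P^r$ to each cyclic class to obtain an irreducible aperiodic stochastic matrix, apply the aperiodic convergence theorem there, and then recover the general limit via the decomposition $P^{(mr+\ell)} = P^{(\ell)}(P^r)^m$ together with the block structure of the cyclic decomposition. So your proposal is in line with the cited sources and there is nothing to compare against in the paper itself.
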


Let $\Omega$ be a finite set and $\ell^{\infty}(\Omega) = C(\Omega) = \mathbb{C}^{\Omega}$ the C*-algebra of finite sequences indexed by $\Omega$. We denote by $\{ p_j \}_{j\in \Omega}$ the collection of pairwise perpendicular projections on $\ell^{\infty}(\Omega)$ given by $p_j(i) = \delta_{ij}$.

\begin{notation}
We denote by $*$ the Schur (entrywise) multiplication of matrices $A = [a_{ij}]$ and $B = [b_{kl}]$ given by $A * B = [a_{ij}b_{ij}]$, and let $\Diag$ be the map on matrices given by $\Diag([a_{ij}]) = (a_{ii})_{i \in \Omega} \in \ell^{\infty}(\Omega)$.

Next, for a non-negative matrix $P=[P_{ij}]$ indexed by $\Omega$, we denote by $\sqrt{P}$ and $P^{\flat}$ the matrices with $(i,j)$-th entry given by
$$
(\sqrt{P})_{ij} := \sqrt{P_{ij}}, 
\qquad \text{and} \qquad
(P^{\flat})_{ij} := 
\begin{cases} 
  (P_{ij})^{-1},  & \mbox{if } P_{ij} > 0 \\
  0, & \mbox{else} 
\end{cases}
$$
\end{notation}

In \cite[Theorem 3.4]{dor-on-markiewicz} the Arveson-Stinespring subproduct system associated to a stochastic matrix $P$ on countable $\Omega$ was computed. When $\Omega$ is finite, we arrive at the following simpler version of the theorem.

\begin{theorem}
Let $P$ be a stochastic matrix over \emph{finite} $\Omega$. The following is a subproduct system $Arv(P)$ over $C(\Omega) \cong \ell^{\infty}(\Omega)$ and is the one given in \cite[Theorem 3.4]{dor-on-markiewicz}.
\begin{enumerate}

\item
The $n$-th fiber is a $C^*$-correspondence over $C(\Omega)$ given by
$$
Arv(P)_n := \{ \ [a_{ij}] \ | \ a_{ij}= 0 \ \text{ if } \  (i,j) \notin E(P^n) \ \}
$$
with left and right actions of $C(\Omega)$ on $Arv(P)_n$ as a bimodule are given by diagonal left and right matrix multiplication and the
$C(\Omega)$-valued inner product is given by
$$
\langle A, B \rangle = \Diag \big[A^* B \big]
$$
for $A,B \in Arv(P)_n$.
\item
The subproduct maps are given by
$$
U_{n,m}(A \otimes B) = (\sqrt{P^{n+m}})^{\flat}*\big[(\sqrt{P^n} * A)\cdot(\sqrt{P^m} * B)\big] 
$$
for $n,m\neq 0$ and $A \in Arv(P)_n$ and $B \in Arv(P)_m$.

\end{enumerate}
\end{theorem}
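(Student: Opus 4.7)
The plan is to deduce this finite-state version from \cite[Theorem 3.4]{dor-on-markiewicz} by making the matrix representation of the abstract fibers explicit, rather than reproving the subproduct system axioms from scratch.

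First, I would observe that since $\Omega$ is finite, $\ell^{\infty}(\Omega) = C(\Omega)$ is a finite-dimensional commutative $C^*$-algebra, so every Hilbert $C^*$-module over it is automatically self-dual. Consequently the notions of $W^*$-correspondence and $C^*$-correspondence over $\ell^{\infty}(\Omega)$ coincide, and the Arveson-Stinespring subproduct system of \cite[Theorem 3.4]{dor-on-markiewicz} is already a $C^*$-subproduct system in the sense used here. This frees me from having to check axioms like coisometry of $U_{n,m}$, associativity, or the $X_0 = \cala$ identity, since all of these are inherited from the general theorem.

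Second, I would set up the identification of fibers. The fiber in \cite[Theorem 3.4]{dor-on-markiewicz} is generated by elements indexed by the edge set $E(P^n)$, and I would exhibit a bimodule isomorphism sending an abstract generator $e_{(i,j)}$ (for $(i,j) \in E(P^n)$) to the matrix unit $E_{ij}$ in the concrete fiber $\{[a_{ij}] : a_{ij} = 0 \text{ off } E(P^n)\}$. Under this identification, diagonal left and right multiplication by functions in $C(\Omega)$ correspond directly to the canonical $\ell^{\infty}(\Omega)$-bimodule actions on the Arveson-Stinespring fiber, and a direct computation on matrix units shows that the $\ell^{\infty}(\Omega)$-valued inner product from the general construction equals $\Diag[A^*B]$.

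The main obstacle is to verify that, under this identification, the subproduct maps $U_{n,m}$ from \cite[Theorem 3.4]{dor-on-markiewicz} take the concrete Schur-product form stated here. I would handle this by evaluating both sides on elementary tensors $E_{ij}\otimes E_{jk}$ with $(i,j) \in E(P^n)$ and $(j,k) \in E(P^m)$, and checking that the Arveson-Stinespring map sends such a tensor to a scalar multiple $\lambda_{ijk}E_{ik}$ of the edge matrix unit in $Arv(P)_{n+m}$. The Schur-weighted prefactor $(\sqrt{P^{n+m}})^{\flat}*[(\sqrt{P^n}*A)\cdot(\sqrt{P^m}*B)]$ evaluated on $A = E_{ij}$, $B = E_{jk}$ produces $\sqrt{P^n_{ij}}\,\sqrt{P^m_{jk}}/\sqrt{P^{n+m}_{ik}}\cdot E_{ik}$, and I would match this to $\lambda_{ijk}$ by computing Arveson-Stinespring inner products on both sides and invoking the normalization of $Arv(P)_{n+m}$. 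Once the formula is established on simple tensors, bilinearity and continuity extend it to all of $Arv(P)_n \otimes Arv(P)_m$, completing the identification and finishing the proof.
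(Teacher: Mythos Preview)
Your proposal is correct and matches the paper's approach: the paper does not give a proof at all, but simply presents this theorem as the specialization to finite $\Omega$ of \cite[Theorem~3.4]{dor-on-markiewicz}, with the remark immediately following it noting that finite-dimensionality makes the $C^*$- and $W^*$-correspondence frameworks coincide. Your sketch of how to carry out the identification (matching abstract generators to matrix units and checking the Schur-product formula on simple tensors) is exactly the kind of unpacking the paper leaves implicit.
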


\begin{remark}
Since the subproduct systems we shall consider in this work will be with finite dimensional fibers and over finite dimensional $C^*$-algebras, they will automatically be $W^*$-correspondences. Hence, by Remark \ref{rem:fock-W*-vs-C*-sum}, the theories of subproduct systems over $C^*$-algebras and their operator algebras discussed here and of subproduct systems over $W^*$-algebras and their operator algebras discussed in \cite{dor-on-markiewicz} will coincide. In this paper we choose to discuss our theories only in the C* (norm closed) context for the sake of brevity and a cleaner exposition.
\end{remark}

\begin{remark} \label{rem:sps-reducing-proj}
Let $(X,U)$ be a subproduct system of a C*-algebra $\cala$. A projection $p\in \cala$ is said to be reducing for $X$ if
$$
U_{n,m}^*(pX_{n+m}p) \subset pX_np \otimes pX_mp
$$

This is the C* / norm-closed version of \cite[Definition 6.19]{dor-on-markiewicz}. Using \cite[Proposition 7.4]{dor-on-markiewicz} we characterized the reducing projections of $Arv(P)$ for any stochastic matrix $P$ over $\Omega$. That is, there is a 1-1 bijection between reducing projections and subsets $C_p \subset \Omega$ such that whenever $\gamma : \{0,...,\ell\} \rightarrow \Omega$ is a path in the directed graph of $P$ that both begins and ends at $C_p$, then in fact for every $0 \leq k \leq \ell$ one has that $\gamma(k) \in C_p$.

Hence, for a finite irreducible stochastic matrix $P$ over $\Omega$, the only reducing non-zero projection is $1 = \sum_{i\in \Omega} p_i \in \ell^{\infty}(\Omega)$. Hence, by considering irreducible stochastic matrices, we are considering irreducible subproduct systems in the above dynamical sense.
\end{remark}

For $A\in Arv(P)_n$ we defined in \cite{dor-on-markiewicz} the shift operator $S^{(n)}_{A}$ uniquely determined on fibers by $S^{(n)}_A(B) = U_{n,m}(A\otimes B)$ for $B \in Arv(P)_m$. The Toeplitz and tensor algebras of $Arv(P)$ are given respectively by
$$
\toeplitz(P) = C^* \Big(\ell^{\infty}(\Omega) \cup \{ \ S^{(n)}_{A} \ | \ n\in \nn, \ A \in Arv(P)_n \ \} \Big)
$$
$$
\tensor(P) = \overline{Alg} \Big(\ell^{\infty}(\Omega) \cup \{ \ S^{(n)}_{A} \ | \ n\in \nn, \ A \in Arv(P)_n \ \} \Big)
$$
We note in passing that $\toeplitz(P)$ and $\tensor(P)$ are generated (as a C*-algebra, and as a norm-closed algebra respectively) by $\{p_i\}_{i\in \Omega}$ and $\{S_{E_{ij}}\}_{(i,j)\in E(P)}$, where $E_{ij} = [\delta_{ij}(k,l)]$ is the zero matrix, except for the $(i,j)$ entry at which it is $1$. Indeed, since $P$ is a finite matrix, each $S_{A}^{(n)}$ can be written as a finite linear combination of $S_{E_{ij}}^{(n)}$ with $(i,j)\in E(P^n)$. Then choose a path of length $n$, say $i = j_0 \rightarrow j_1 \rightarrow ... \rightarrow j_n = j$, and we have that $S_{E_{ij}}^{(n)} = c \cdot S_{E_{j_0j_1}}^{(1)} \cdot ... \cdot S_{E_{j_{n-1}j_n}}^{(1)}$ for some $c>0$.

Next, for a finite stochastic matrix $P$ over $\Omega$, and for every $n\in \nn$ and $A\in Arv(P)_n$ we defined operators in $\LL(\FF_{Arv(P)})$ mapping each $Arv(P)_m$ to $Arv(P)_{n+m}$, one denoted by $T_{A}^{(n)}$ and given by $T^{(n)}_{A} = S^{(n)}_{(\sqrt{P^{n}})^{\flat} * A}$, and the other denoted by $W_{A}^{(n)}$ which is uniquely determined on fibers $Arv(P)_m$ by $W^{(n)}_{A}(B) = A\cdot B$. For the purposes of computing the Cuntz-Pimsner algebra, we defined in \cite[Section 5]{dor-on-markiewicz} the auxiliary $C^*$-algebra
$$
\toeplitz^{\infty}(P) : = C^*\Big(\ell^{\infty}(\Omega) \cup \big\{ \ W^{(n)}_{A} \ | \ n\in \mathbb{N}, \  \ A \in Arv(P)_n \ \big\}\Big)
$$
and we noted that due to finiteness of $P$ we have that
$$
\toeplitz(P) = C^*\Big(\ell^{\infty}(\Omega) \cup \big\{ \ T^{(n)}_{A} \ | \ n\in \mathbb{N}, \  \ A \in Arv(P)_n \ \big\}\Big)
$$

\cite[Proposition 5.6]{dor-on-markiewicz} was then used to show that in fact $\cuntz(P)$ is *-isomorphic to $\toeplitz^{\infty}(P) / \JJ(\toeplitz^{\infty}(P))$, thereby reducing the computation of the Cuntz-Pimsner algebra to computing a quotient of an algebra generated by operators $W^{(n)}_{A}$ which do not depend on weights of entries of $P$.

\subsection*{Extension theory}

We recall some facts from the theory of primitive ideal spectra and extension theory for C*-algebras, to be used in later sections.

More details on primitive ideal spectra of C*-algebras can be found in \cite[Chapter 3]{Dix-C-alg} and \cite[Section 1.5]{arv-book}. For an account on the Busby invariant and extension theory for C*-algebras see \cite{Arveson-note-extension}, \cite[Section 15]{Blackadar-K-theory}, \cite[Section 1]{Brown-Dadarlat}, \cite[Section 2]{ELP-morphisms} and \cite{Paschke-Salinas}.

Let $A$ be a C*-algebra. We denote by $\hat{A}$ the collection of unitary equivalence classes of irreducible representations of $A$. On the other hand, we define $\Prim(A)$ to be the set of primitive ideals of $A$, where a primitive ideal is the kernel of an irreducible representation of $A$.

The set $\Prim(A)$ comes equipped with a lattice structure determined by set inclusion. Next, since any two unitarily equivalent *-representations have the same kernel, the map $\pi \mapsto \Ker \pi$ factors through to yield a surjective map $\kappa: \hat{A} \rightarrow \Prim(A)$.

It turns out that for type I C*-algebras, the above map $\kappa$ is a bijection, as every primitive ideal $J$ uniquely determines, up to unitary equivalence, an irreducible representation $\pi$ such that $J = \Ker \pi$ (See \cite[Theorem 4.3.7]{Dix-C-alg}).

When we have a *-isomorphism $\varphi : A \rightarrow B$ between two C*-algebras, we denote by $\varphi_* : \Prim(A) \rightarrow \Prim(B)$ the induced lattice isomorphism between the spectra. 

Suppose we have the following exact sequence of C*-algebras
\begin{equation} \label{exact-seq-one}
0 \rightarrow K \overset{\iota}{\rightarrow} A \overset{\pi}{\rightarrow} B \rightarrow 0
\end{equation}
and denote by $q : M(K) \rightarrow M(K) / K = : \QQ(K)$ the Calkin map. Then there is a *-homomorphism $\theta : A \rightarrow M(K)$ into the multiplier algebra of $K$, uniquely determined by $\theta(a) c = \iota^{-1} ( a \iota (c))$ for $c \in K$ and $a\in A$. Hence, a *-homomorphism $\eta : B \rightarrow \QQ(K)$ will be induced, and we call this map $\eta$ the \emph{Busby invariant} of the exact sequence above. We say that the above exact sequence is \emph{essential} if $K$ is an essential ideal in $A$, that is, if the intersection of $K$ with any non-trivial ideal in $A$ is non-trivial.

The above association turns out to be a bijection between exact sequences of C*-algebras given as in \eqref{exact-seq-one} and *-homomorphisms $\eta :B \rightarrow \QQ(K)$, where the inverse map sends a *-homomorphism $\eta :B \rightarrow \QQ(K)$ to the exact sequence where the pre-image $A := q^{-1}(\eta(B))$ under the Calkin quotient $q$, yield an exact sequence as in \eqref{exact-seq-one}, with $\pi$ replaced by the restriction of $q$ to $A$. Under this bijection, an exact sequence as in \eqref{exact-seq-one} is \emph{essential} if and only if its associated Busby invariant is an \emph{injective} *-homomorphism.

\begin{defi}
Suppose $K_i, A_i, B_i$ are C*-algebras for $i=1,2$, and that
\begin{equation} \label{eq:exact-seq}
0 \rightarrow K_1 \overset{\iota_1}{\rightarrow} A_1 \overset{\pi_1}{\rightarrow} B_1 \rightarrow 0
\ \ \text{and} \ \ 
0 \rightarrow K_2 \overset{\iota_2}{\rightarrow} A_2 \overset{\pi_2}{\rightarrow} B_2 \rightarrow 0
\end{equation}
are two short exact sequences. We say that these two short exact sequences are \emph{isomorphic} if there exists a *-isomorphism $\alpha : A_1 \rightarrow A_2$ such that $\alpha(\iota_1(K_1)) = \iota_2(K_2)$.
\end{defi}

Suppose $\eta_1$ and $\eta_2$ are Busby maps for exact sequences as in \eqref{eq:exact-seq}. \cite[Theorem 2.2]{ELP-morphisms} then yields that these two short exact sequences are isomorphic if and only if there exist *-isomorphisms $\kappa : K_1 \rightarrow K_2$ and $\beta : B_1 \rightarrow B_2$ such that 
$$
\widetilde{\kappa} \eta_1 = \eta_2 \beta
$$
where $\widetilde{\kappa} : \QQ(K_1) \rightarrow \QQ(K_2)$ is the induced *-isomorphism between the Calkin algebras.

In the context of extensions by a single copy of compact operators on separable infinite dimensional Hilbert space, that is when $K = \KK(H)$, the Calkin quotient map $q: M(\KK(H)) \rightarrow \QQ(\KK(H))$ discussed above is just the regular quotient map into the Calkin algebra, since $M(\KK(H)) = B(H)$, so that $M(\KK(H)) / \KK(H) = \QQ(H)$. We denote by $\KK = \KK(H)$ the compact operators on separable infinite dimensional Hilbert space $H$.

Let $B$ be a C*-algebra. We write $E(B)$ for the collection of all injective *-homomorp\-hisms of $B$ into $Q(H)$. We call elements in $E(B)$ extensions, as they are in bijection, under (the inverse of) the Busby map, with essential exact sequences of $C^*$-algebras of the form
$$
0 \rightarrow \KK \rightarrow A \rightarrow B \rightarrow 0
$$

We then say that two extensions $\eta_1, \eta_2 \in E(B)$ are 
\begin{enumerate}
\item
Strongly (unitarily) equivalent if there is a unitary $U\in B(H)$ such that $\eta_1(b) = q(U)\eta_2(b)q(U^*)$ for all $b\in B$.
\item
Weakly (unitarily) equivalent if there is a unitary element $u\in \QQ(H)$ such that $\eta_1(b) = u\eta_2(b)u^*$ for all $b\in B$.
\end{enumerate}

When $B$ is unital we write $\Ext_s(B)$ and $\Ext_w(B)$ for the strong and weak equivalence classes of \emph{unital} extensions in $E(B)$, respectively. When $B$ is non-unital, we write $\Ext_s(B)$ and $\Ext_w(B)$ for the strong and weak equivalence classes of \emph{all} extensions in $E(B)$, respectively, however in this case $\Ext_s(B) = \Ext_w(B)$ by \cite[Proposition 15.6.4]{Blackadar-K-theory}. We denote by
 $[\eta]_s$ and $[\eta]_w$ the equivalence classes of an extension $\eta$ in $\Ext_s(B)$ and $\Ext_w(B)$, respectively

 Given $\eta_1,\eta_2 \in E(B)$, we may define $\eta_1 \oplus \eta_2 \in E(A)$ (via some fixed identification $\QQ(H) \oplus \QQ(H) \subseteq \QQ(H \otimes \mathbb{C}^2) \cong \QQ(H)$) by specifying $(\eta_1 \oplus \eta_2)(b) = \eta_1(b) \oplus \eta_2(b)$. This operation induces a well-defined addition $+$ on $\Ext_s(B)$ and $\Ext_w(B)$ given for two extensions $\eta_1$ and $\eta_2$ by $[\eta_1]_s + [\eta_2]_s: = [\eta_1 \oplus \eta_2]_s$ and $[\eta_1]_w + [\eta_2]_w: = [\eta_1 \oplus \eta_2]_w$, and makes them into abelian semigroups.

An extension $\tau$ is called \emph{trivial}  if it lifts to a *-homomorphism $\hat{\tau} : B \rightarrow B(H)$ such that its composition with the Calkin quotient map yields $q \circ \hat{\tau} = \tau$. Such a trivial extension $\tau$ is called \emph{strongly unital}
if the map $\hat{\tau}$ can be chosen to be unital (in particular this
is relevant only when $B$ itself is unital and $\tau$ is unital). Trivial extensions correspond to \emph{split essential} exact sequences via (the inverse of) the Busby map. It is straightforward to construct injective *-homomorphisms of a C*-algebra $B$ into $B(H)$ which do not intersect $\KK(H)$, hence trivial extensions always exist. Moreover, the same argument yields strongly unital trivial extensions.

Voiculescu~\cite{Voicu-Weyl-VN} showed that when $B$ is separable, the semigroup $\Ext_s(B)$ has a zero element. When $B$ is non-unital, the zero element consists precisely of the trivial extensions. When $B$ is
unital, it consists of the strongly unital trivial extensions. For more
details, see \cite[Section~15.12]{Blackadar-K-theory}, especially \cite[Theorem~15.12.3]{Blackadar-K-theory}.

Although $\Ext_s(B)$ and $\Ext_w(B)$ are not always groups, it follows from a theorem of Choi and Effros that when $B$ is separable and nuclear,
both semigroups are actually groups (see \cite[Corollary 15.8.4]{Blackadar-K-theory}).
 
Suppose now that $B$ is unital. There is an action $\epsilon$ of $\mathbb{Z}$ on $\Ext_s(B)$ given by $\epsilon(m) [\eta]_s = [Ad_u\circ \eta]_s$ where $u\in \QQ(H)$ is a unitary of Fredholm index $-m$, and $Ad_u(a) = u^*au$ for $a\in \QQ(H)$. By definition of addition,
we have that $\epsilon(n+m)([\eta_1]_s + [\eta_2]_s) =
[Ad_{u \oplus v} (\eta_1 \oplus \eta_2) ] 
= \epsilon(n)[\eta_1]_s + \epsilon(m)[\eta_2]_s$ where $u$ and $v$
are unitaries in $\QQ(H)$ of indices $-n$ and $-m$ respectively. 
In particular, if $\tau$ is a strongly unital trivial extension
then $\epsilon(m)[\eta]_s = \epsilon(0+m)([\eta]_s + [\tau]_s)=
[\eta]_s + \epsilon(m)[\tau]_s$. Hence, when we denote by $\lambda_B: \Ext_s(B) \to \Ext_w(B)$ the canonical quotient map, we have that $\Ker \lambda_B = \{\epsilon(m)[\tau]_s \,|\, m \in \mathbb{Z} \}$. 

Let $\gamma_B : \Ext_w(B) \rightarrow \mathrm{Hom}(K_1(B),\mathbb{Z})$ be the so-called \emph{index invariant} of $B$, given by $\gamma_B([\eta]_w) = \ind \circ \eta_*$, where $\eta_* : K_1(B) \rightarrow K_1(\QQ(H))$ is the map induced between the $K_1$ groups and $\ind : K_1(\QQ(H)) \rightarrow \mathbb{Z}$ is the Fredholm index.
Hence, for a unital C*- algebra $B$, we always have the following sequence of maps
\begin{equation}\label{eq:seq-ext}
\Ext_s(B) \overset{\lambda_B}{\longrightarrow} \Ext_w(B) \overset{\gamma_B}{\longrightarrow} \mathrm{Hom}(K_1(B),\mathbb{Z})
\end{equation}

We next give the details of two particular examples, which will turn out to be useful to us later in the end of Section \ref{Sec:Cuntz-Pimsner} and in Section \ref{Sec:Classification}.

\begin{example} \label{ex:torus-extensions}
Take $B = C(\mathbb{T})$. In this case $B$ is nuclear and separable, so both the weak and strong extension semigroups are groups. We note that $\mathrm{Hom}(K_1(B),\mathbb{Z}) \cong \mathbb{Z}$ as $K_1(B) \cong \mathbb{Z}$, and every homomorphism is determined on the generator $1$. We next show that in this case, the map $\gamma_{B} \circ \lambda_B$ is surjective. Indeed, for every $m \in \mathbb{Z}$ there is a unitary $u \in \QQ(H)$ with $\sigma(u) = \mathbb{T}$, and Fredholm index $m$, we may define a *-homomorphism $\eta_m : C(\mathbb{T}) \rightarrow \QQ(H)$ given by $\eta_m(z\mapsto z) = u$ which implements
a *-isomorphism $C(\mathbb{T}) \cong C^*(u)$. Thus we obtain an extension with index invariant $k \mapsto k \cdot m \in \mathrm{Hom}(K_1(B),\mathbb{Z})$.

Next, we show that $\gamma_B \circ \lambda_B$ is injective. Indeed, if $\gamma_{B} \circ \lambda_B[\eta]_s = 0$, then $\ind(\eta(z \mapsto z)) = 0$ and hence there is a unitary $U \in B(H)$ with $\sigma(U) = \mathbb{T}$ s.t $q(U) = \eta(z \mapsto z)$. Thus, $\eta$ lifts to a  unital $*$-homomorphism $\hat{\eta} : C(\mathbb{T}) \rightarrow B(H)$, so that $\eta$ is a strongly unital trivial extension, and the map $\gamma_B \circ \lambda_B$ is injective.

We conclude that $\Ext_s(C(\mathbb{T})) \cong \Ext_w(C(\mathbb{T})) \cong \mathbb{Z}$, and that $\epsilon(n)$ acts trivially on $\Ext_s(C(\mathbb{T}))$ for each $n$.
\end{example}

\begin{example} \label{ex:matrix-extension}
Take $B = M_d$. Again in this case $B$ is nuclear and separable so that both weak and strong extension semigroups are groups. We already know that $K_1(M_d) \cong \{0\}$, so that the right most group in \eqref{eq:seq-ext} vanishes. Let $\eta : M_d \rightarrow \QQ(H)$ be
a unital extension. We reiterate the construction in \cite[Example 15.4.1 (b)]{Blackadar-K-theory} lifting $\eta : M_d \rightarrow Q(H)$ to a *-homomorphism $\hat{\eta} : M_d \rightarrow B(H)$, and measuring how far $\hat{\eta}$ is from being unital. That is, how far is $\eta$ from being a strongly unital trivial extension.

Let $\{\overline{e_{ij}}\}$ be a system of matrix units for $\eta(M_d)$. By standard essential spectrum arguments, one can find projections $p_{ii} \in B(H)$ that lift each $\overline{e_{ii}}$. Next, by appealing to \cite[Lemma~1.1]{Paschke-Salinas}, for all $2\leq i \leq d$  we may find partial isometries $e_{1i}$ lifting $\overline{e_{1i}}$ such that $e_{1i}^* e_{1i} \leq p_{ii}$ and $e_{1i}e_{1i}^* \leq p_{11}$. We set $e_{ij} = e_{1i}^* e_{1j}$ so that $\{ e_{ij}\}$ is a lifted set of matrix units in $pB(H)p$, where $p = \sum e_{ii}$. We note that $p$ is a projection of finite dimensional cokernel, say of dimension $\ell$, so that by adding a homomorphism from $M_d$ to $(1-p)B(H)(1-p)$ if necessary, we may arrange for $0 \leq \ell < d$. 

The defect of $\eta$ is then defined to be $\ell \in \mathbb{Z}_d$, and up to strong equivalence it is independent of the choice made in the process above. It is then easy to see that two unital extensions $\eta_1, \eta_2 \in E(M_d)$ are strongly equivalent if and only if they have the same defect, and are always weakly equivalent.
Hence, we conclude that $\Ext_s(M_d) \cong \mathbb{Z}_d$ and $\Ext_w(M_d) \cong \{0\}$.
\end{example}

\section{Cuntz-Pimsner algebra of a stochastic matrix} \label{Sec:Cuntz-Pimsner}

We next close a gap kindly pointed out to us by Dilian Yang in the proof of the characterization of the Cuntz-Pimsner algebra of a finite irreducible stochastic matrix, which is one of the theorems of \cite[Section~5]{dor-on-markiewicz}. The theorem at stake, which corresponds to \cite[Corollary 5.16]{dor-on-markiewicz} is
as follows.

\begin{theorem}\label{theorem:main}
Let $P$ be an irreducible stochastic matrix of size $d$. Then
$\cuntz(P) \cong M_d(\cc) \otimes C(\torus)$.
\end{theorem}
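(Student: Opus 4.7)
The plan is to use the $\ast$-isomorphism $\cuntz(P) \cong \toeplitz^\infty(P)/\JJ(\toeplitz^\infty(P))$ from \cite[Proposition~5.6]{dor-on-markiewicz}, which replaces the weighted shifts $T^{(n)}_A$ by the unweighted multiplication operators $W^{(n)}_A$. In this way, all dependence on the specific stochastic entries of $P$ is concentrated in asymptotic statements about powers of $P$, which are controlled by the convergence theorem.

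The first substantial step is to identify the fixed-point algebra $\cuntz(P)_0$ of the descended gauge action $\alpha$ with $M_d(\cc)$. For each $(i,j) \in \Omega \times \Omega$, I would construct a candidate matrix unit $\overline{e}_{ij} \in \cuntz(P)_0$ as the image in the quotient of a concrete degree-zero polynomial in the $W^{(n)}_{E_{k\ell}}$ and their adjoints. Using Proposition~\ref{prop:complete-norm-cuntz} to compute quotient norms, combined with the asymptotic $P^{(mr+\ell)}_{ij} \to r\nu_j$ from Theorem~\ref{theorem:convergence-theorem-finite}, I would verify the matrix-unit relations $\overline{e}_{ij}\overline{e}_{k\ell} = \delta_{jk}\overline{e}_{i\ell}$ together with the completeness relation $\sum_i \overline{e}_{ii} = 1$ in $\cuntz(P)_0$. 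A density argument, invoking the faithful conditional expectation $\Phi : \cuntz(P) \to \cuntz(P)_0$ obtained from averaging $\alpha$, then shows that these $\overline{e}_{ij}$ span all of $\cuntz(P)_0$.

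Next, I would produce a unitary $u \in \cuntz(P)$ of gauge-degree one with full spectrum $\torus$ that commutes with the constructed $M_d(\cc) \subseteq \cuntz(P)$. A natural candidate is a carefully weighted sum of the images $\overline{W}^{(1)}_{E_{ij}}$ for $(i,j) \in E(P)$, chosen to compensate for both the stochastic entries and the cyclic permutation coming from the period-$r$ structure of Theorem~\ref{theorem:cyclic-graph-decomposition}. Together, $\{\overline{e}_{ij}\}_{i,j}$ and $u$ induce a unital $\ast$-homomorphism $\varphi : M_d(\cc) \otimes C(\torus) \to \cuntz(P)$. Surjectivity follows because the generators of $\cuntz(P)$ lie in the image of $\varphi$, while injectivity follows from a gauge-invariant uniqueness argument: $\varphi$ is gauge-equivariant for $\alpha$ and the rotation action on $C(\torus)$, and it is faithful on the simple fixed-point subalgebra $M_d(\cc) \otimes 1$, whence (since the conditional expectations are faithful) it is injective on the whole tensor product.

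The main obstacle lies in the periodic case $r > 1$. The cyclic decomposition of $\Omega$ forces degree-zero words in the shifts to live a priori inside $r$ separate block components indexed by $\Omega_0 \sqcup \cdots \sqcup \Omega_{r-1}$, and one must verify that in the Cuntz-Pimsner quotient these blocks amalgamate into a single copy of $M_d(\cc)$ rather than persist as a direct sum of $r$ smaller matrix algebras. Equivalently, $u$ must correctly intertwine these blocks under the gauge action. The asymptotic mixing provided by the convergence theorem across all residue classes mod $r$ is precisely what implements this amalgamation, and tracking this rigorously at the level of matrix units is the technical heart of the argument.
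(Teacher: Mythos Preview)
Your outline contains a genuine gap in the periodic case $r>1$, and it is exactly the point you flag as ``the main obstacle'': the blocks do \emph{not} amalgamate. The fixed-point algebra $\cuntz(P)_0$ is not $M_d(\cc)$ but rather $\bigoplus_{\ell=0}^{r-1} M_{|\Omega_\ell|}(\cc)$. To see why, note that a degree-zero element of $\toeplitz^\infty(P)$ maps each fiber $Arv(P)_m$ to itself, and on large fibers acts (via Proposition~\ref{proposition:nograph}) by left multiplication by some matrix. If $i\in\Omega_{\ell_1}$ and $j\in\Omega_{\ell_2}$ with $\ell_1\neq\ell_2$, then for $E_{jk}\in Arv(P)_m$ one has $E_{ij}E_{jk}=E_{ik}$, but the cyclic decomposition forces $\sigma(k)\equiv \sigma(j)+m$, so $P^{(m)}_{ik}=0$ and $E_{ik}\notin Arv(P)_m$. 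Hence no degree-zero element can act as the matrix unit $E_{ij}$ across distinct cyclic classes, and your proposed $\overline{e}_{ij}\in\cuntz(P)_0$ cannot exist for such pairs. For the same reason there is no degree-one unitary with full spectrum in $\cuntz(P)$: degree-one elements shift $\Omega_\ell$ to $\Omega_{\ell+1}$, and a single cyclic shift of the blocks has order $r$, not infinite order.

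The paper's proof avoids this obstruction by abandoning both desiderata. It builds matrix units $\overline{S_{ij}}$ of gauge degree $\sigma(j)-\sigma(i)$ (so only the diagonal blocks sit in degree zero) together with a unitary $\overline{U}$ of degree $r$, and shows directly that these satisfy the universal relations for $M_d(\cc)\otimes C(\torus)$ and generate $\cuntz(P)$. The resulting surjection $\psi$ is \emph{not} equivariant for the obvious gauge actions; instead one has $\psi(\cala)=\cuntz(P)_0$ for $\cala=\bigoplus_\ell M_{|\Omega_\ell|}(\cc)\otimes 1$. Injectivity then requires an extra faithful conditional expectation $E_0:M_d(\cc)\otimes 1\to\cala$ so that $\Phi_0\circ\psi=\psi\circ E_0\circ\Gamma_0$, after which the standard faithfulness argument goes through. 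Your gauge-invariant-uniqueness strategy is the right idea in spirit, but it must be run through this smaller fixed-point algebra, not through $M_d(\cc)$.
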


The main issue is that the cyclic decomposition of periodic irreducible stochastic matrices need not be realized in square blocks
as claimed in \cite[Remark~2.9]{dor-on-markiewicz}. Consider the following example
kindly brought to our attention by Dilian Yang: let $\Omega = \{1,2,3\}$ and set 
$$
P =  \begin{bmatrix}
       0 & 0  & 1 \\
       0  & 0  & 1 \\
       \frac{1}{2} & \frac{1}{2} & 0 
     \end{bmatrix} = 
     \begin{bmatrix}
            0  & P_0 \\
            P_1 & 0 
          \end{bmatrix}, \qquad
          \text{where} \quad
          P_0 = \begin{bmatrix}
                  1 \\
                  1 
               \end{bmatrix}, 
               \quad
               P_1=
           \begin{bmatrix}
                  \frac{1}{2} & \frac{1}{2} 
                \end{bmatrix}    
$$ 
The matrix $P$ has period $2$, $\Omega_0 = \{1,2\}$ and $\Omega_1 = \{3\}$, and both $P_0$ and $P_1$ are not square. 

Therefore, the results in 
\cite[Section 5]{dor-on-markiewicz}
after \cite[Proposition 5.7]{dor-on-markiewicz}, and in particular
\cite[Proposition 5.15]{dor-on-markiewicz}, only apply in the case
of square blocks. Therefore a gap remains in the proof of Theorem~\ref{theorem:main} in its stated form, and we now provide a different proof for it, which works in all cases, and resolves any remaining gaps with the rest of \cite[Section 5]{dor-on-markiewicz}. The issue above does not affect the remainder of the paper, namely \cite[Sections 6 and 7]{dor-on-markiewicz}.

Recall that the adjoint
$W_A^{(n)*}$ of $W_A^{(n)}$, which maps $Arv(P)_{n+m}$ to $Arv(P)_m$, is uniquely determined on fibers by
$$
W_A^{(n)*}(B) = Gr(P^{m}) * ( A ^* B ), \qquad  B \in Arv(P)_{m+n}
$$
where the reason for Schur-multipling $A ^* \cdot B$ with $Gr(P^{m})$ is to make sure that the product lands in $Arv(P)_m$ with its given entry constraints (See the discussion preceeding \cite[Proposition 5.7]{dor-on-markiewicz}).

We note that $\ell^\infty(\Omega)$ acts on $Arv(P)_m$ as left multiplication by diagonal matrices. Therefore,
$W_{E_{kk}}^{(0)} = p_k$ as the adjointable operator on $\mathcal{F}_{Arv(P)}$.

The following proposition, which works in all cases, replaces \cite[Remark 5.10]{dor-on-markiewicz} and the discussion preceding it.

\begin{prop} \label{proposition:nograph}
Let $q \in \nn$ and suppose that $A \in Arv(P)_q$. Then there exists
$m_0 \in \nn$ such that for all $m\geq m_0$ we have that
$$
W^{(q)*}_{A}(B) = A^*B, \qquad \forall B \in Arv(P)_{q+m}
$$
That is, if $m\geq m_0$ and $B \in Arv(P)_{q+m}$, then the matrix $A^*B$ has support contained in the support of $P^m$.
\end{prop}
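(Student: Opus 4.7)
The plan is to reduce the claim to a statement about supports: since $W_A^{(q)*}(B)=Gr(P^{m})*(A^{*}B)$ for $B\in Arv(P)_{q+m}$, proving $W_A^{(q)*}(B)=A^{*}B$ amounts to showing that the support of $A^{*}B$ is contained in $E(P^{m})$ for all sufficiently large $m$, uniformly in $B$.

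First I would expand entrywise: for $(i,j) \in \Omega\times\Omega$ we have
$$
(A^{*}B)_{ij} = \sum_{k\in \Omega} \overline{A_{ki}}\,B_{kj}.
$$
If $(A^{*}B)_{ij}\neq 0$, then there exists $k\in\Omega$ with $A_{ki}\neq 0$ and $B_{kj}\neq 0$. By the support conditions defining $Arv(P)_q$ and $Arv(P)_{q+m}$, this forces $P^{q}_{ki}>0$ and $P^{q+m}_{kj}>0$; equivalently, $(k,i)\in E(P^q)$ and $(k,j)\in E(P^{q+m})$.

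The key step is to extract the right cyclic class information. By the cyclic decomposition (Theorem~\ref{theorem:cyclic-graph-decomposition}), let $\Omega = \bigsqcup_{\ell=0}^{r-1}\Omega_\ell$ and denote by $c(x)\in\mathbb{Z}_r$ the class of a state $x\in\Omega$. Existence of a path of length $n$ from $x$ to $y$ forces $c(y)-c(x)\equiv n \pmod r$. Applied to the two paths above this yields
$$
c(i)-c(k)\equiv q\quad\text{and}\quad c(j)-c(k)\equiv q+m \pmod{r},
$$
so that $c(j)-c(i)\equiv m \pmod{r}$, which is precisely the cyclic compatibility needed for $P^m_{ij}$ to be eventually positive.

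Finally, I would invoke the convergence theorem (Theorem~\ref{theorem:convergence-theorem-finite}) for finite irreducible stochastic matrices: for each pair $(i,j)\in\Omega\times\Omega$ with $c(j)-c(i)\equiv \ell\pmod r$ one has $P^{mr+\ell}_{ij}\to r\nu_j>0$, hence there is $N_{ij}\in\nn$ such that $P^m_{ij}>0$ for every $m\geq N_{ij}$ satisfying $m\equiv c(j)-c(i)\pmod r$. Setting $m_0 := \max_{i,j\in\Omega}N_{ij}$ (which is finite since $\Omega$ is finite), the discussion above shows that whenever $m\geq m_0$ and $(A^{*}B)_{ij}\neq 0$, the congruence condition $c(j)-c(i)\equiv m\pmod r$ is automatic, and therefore $P^{m}_{ij}>0$, i.e.\ $(i,j)\in E(P^m)$. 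Consequently $Gr(P^m)*(A^{*}B) = A^{*}B$, proving the claim. The main (mild) obstacle is simply the careful cyclic-class bookkeeping; note that $m_0$ can be chosen depending only on $P$, independently of $q$, $A$ and $B$.
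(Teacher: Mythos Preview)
Your proof is correct and follows essentially the same approach as the paper. Both arguments hinge on the same cyclic-class bookkeeping: from $(k,i)\in E(P^q)$ and $(k,j)\in E(P^{q+m})$ one deduces $\sigma(j)-\sigma(i)\equiv m\pmod r$, and then eventual positivity of $P^{m}_{ij}$ along that residue class finishes the job. The only cosmetic differences are that the paper argues by contradiction (passing to subsequences to fix $i,j,k$ and the residue $\ell$) and cites item~(2) of Theorem~\ref{theorem:cyclic-graph-decomposition} for eventual positivity, whereas you argue directly and invoke the convergence Theorem~\ref{theorem:convergence-theorem-finite}; your direct route is arguably cleaner and makes explicit that $m_0$ depends only on $P$.
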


\begin{proof}
Suppose this fails. Then there exists a sequence of matrices $B(n) \in Arv(P)_{q+m_n}$, with $n\mapsto m_n$ increasing, such that
the support of $A^*B(n)$ is not contained in the support of $P^{m_n}$.
By finiteness of $P$, perhaps by replacing $B(n)$ by a subsequence, we may assume that
there exist $i,j, k \in \Omega$ independent of $n$ such that
 $B(n)\in Arv(P)_{q+m_n}$ and both $p_iA^*p_k \neq 0$ and $p_k B(n)p_j \neq 0$ while $P_{ij}^{(m_n)} = 0$. Again by moving to a subsequence, we may assume that there exists $0\leq \ell < r$ independent of $n$
such that $m_n \equiv \ell \mod r$. 

Let $\Omega_0, \dots, \Omega_{r-1}$ be the cyclic decomposition of $P$ with respect to $k$. Note that by item (2) of Theorem \ref{theorem:cyclic-graph-decomposition}, we must have that $P^{(m)}_{ij} = 0$ for \emph{all} $m$ such
that $m\equiv \ell \mod r$. Therefore there are no paths from $i$
to $j$ whose length is of residue $\ell \mod r$.

Let  $\sigma(i), \sigma(j)$ be such that $i \in \Omega_{\sigma(i)}$ and
$j \in \Omega_{\sigma(j)}$.  Since $p_iA^*p_k \neq 0$, we have that $p_k A p_i \neq 0$ and hence $P^{(q)}_{k i}>0$. It follows from the cyclic decomposition theorem that paths from $k$ to $i$ have length with residue $\sigma(i)$ (mod $r$, which we will suppress). Since paths from $k$ to $k$ must have lengths with zero residue by periodicity, we must have that 
 paths from $i$ to $k$ will have length with residue $r-\sigma(i)$. Therefore paths from $i$ to $j$ have lengths with residue $r - \sigma(i) + \sigma(j) \equiv \sigma(j) - \sigma(i) \mod r$.

Next, since $A \in Arv(P)_q$, 
and $p_k A p_i \neq 0$, we have by the definition of the cyclic
decomposition that $\sigma(i) \equiv q \mod r$. Similarly, since
$B \in Arv(P)_{q + m_n}$, 
and $p_k B p_j \neq 0$, we have that $\sigma(j) \equiv q  + m_n  \equiv q + \ell \mod r$. Therefore, $\sigma(j) - \sigma(i) \equiv  q + \ell - q \equiv \ell \mod r$ and we conclude that all paths from $i$ to $j$ must have residue $\ell$ mod $r$. But this is impossible since we have noted before that there are \emph{no} paths from $i$ to $j$ whose length is of residue $\ell \mod r$.
\end{proof}

\begin{defi} \label{defi:cuntz-generators}
Let $P$ be a finite irreducible $r$-periodic stochastic matrix over $\Omega$ of size $d$.
We will say that a cyclic decomposition $\St_0,..., \St_{r-1}$ for $P$
is \emph{properly enumerated} if $\Omega$ is enumerated in such a way that for every $0\leq  m<k<r$,
$i \in \Omega_m$ and $j \in \Omega_k$ we have that $i<j$.
For $i \in \Omega$, denote by $\sigma(i)$ the unique index $0 \leq \sigma(i) < r$ such that $i \in \Omega_{\sigma(i)}$.

Given a properly enumerated cyclic decomposition $\St_0,..., \St_{r-1}$ for $P$, we define operators $U$ and $(S_{ij})_{i, j \in \Omega}$  in
$\LL(\mathcal{F}_{Arv(P)})$ as follows. The operator $U$ has degree
$r$ with respect to the grading, i.e. for every $m\in\nn$, $U (Arv(P)_m ) \subseteq Arv(P)_{m+r}$, and it is uniquely determined by 
$$
U(B) = Gr(P^{m+r}) * B, \qquad m \in \nn, B \in Arv(P)_m.
$$
If $i \leq j$, then $\sigma(i) \leq \sigma(j)$, and denote by $\ell = \sigma(j) - \sigma(i)$. Then $S_{ij}$ is an operator of degree $\ell$, i.e.
for all $m\geq 0$, $S_{ij}( Arv(P)_m ) \subseteq Arv(P)_{m+\ell}$ and 
it is given by
$$
S_{ij}(B) = Gr(P^{m+\ell})*(E_{ij} \cdot B), \qquad m\in\nn, B \in  Arv(P)_m.
$$
If $i > j$ we define $S_{ij} = S_{ji}^*$.
The family $(U, (S_{ij})_{i,j\in \Omega})$ is called the \emph{standard family} associated to the properly enumerated cyclic decomposition
$\St_0,..., \St_{r-1}$.
\end{defi}

Recall the following auxiliary C*-algebra considered in \cite[Section 5]{dor-on-markiewicz}:
\begin{align*}
\toeplitz^\infty(P) & = C^*( \ell^{\infty}(\Omega) \cup \{ W^{(n)}_A \mid n\in \nn, A \in Arv(P)_n \})
\end{align*}
We recall that
$$
\mathcal{J}(\mathcal{T}^{\infty}(P)) : = \{ \ T\in \mathcal{T}^{\infty}(P) \ | \ \lim_{n \rightarrow \infty}\|TQ_n \| = 0 \ \}
$$
is a two sided ideal in $\mathcal{T}^{\infty}(P)$ with $\mathcal{O}(P) \cong \mathcal{T}^{\infty}(P) / \mathcal{J}(\mathcal{T}^{\infty}(P))$ by \cite[Theorem 5.6]{dor-on-markiewicz}.
We denote by $\overline{T} \in \mathcal{O}(P)$ the image of $T\in \mathcal{T}^{\infty}(P)$ under the associated canonical quotient map $q: \mathcal{T}^{\infty}(P) \rightarrow \mathcal{O}(P) \cong  \mathcal{T}^{\infty}(P) / \mathcal{J}(\mathcal{T}^{\infty}(P))$.

\begin{lemma} \label{lemma:almost-universal}
Let $P$ be an irreducible $r$-periodic stochastic matrix over $\Omega$ of size $d$ with
properly enumerated cyclic decomposition $\St_0,..., \St_{r-1}$,
and let $(U, (S_{ij})_{i,j\in \Omega})$ be its associated 
standard family.
\begin{enumerate}
\item\label{relations-to-cuntz} Let $i,j\in \Omega$ be such that $i \leq j$ in the properly enumerated decomposition of $\Omega$, so that $\ell := \sigma(j) - \sigma(i) \geq 0$.
Then there exists $n_0 \in \mathbb{N}$ such that for all $n\geq n_0$ we have
$$
S_{ij} = W_{I_d}^{(nr)*}W_{E_{ij}}^{(nr+\ell)} \ \text{ and } \ U = W_{I_d}^{(nr)*}W_{I_d}^{(nr +r)}
$$
Hence, $U \in \toeplitz^\infty(P)$ and $S_{ij} \in \toeplitz^\infty(P)$ for all $i, j \in \Omega$.

\item\label{coords} Let $i, j \in \Omega$. 
There exists $m_0 \in \nn$ such that for all $m\geq m_0$, and
$B \in Arv(P)_m$ we have that
$$
S_{ij}(B) = E_{ij}B,  \qquad
U(B) = B
\qquad \text{and} \qquad
U^*(B) = B
$$
\item For all $i,j,t, k  \in \Omega$ we have
$S_{ij}S_{tk} - \delta_{jt} S_{ik} \in \mathcal{J}(\mathcal{T}^{\infty}(P))$
\item
$ U^*U - I, \ \ UU^* - I \in \mathcal{J}(\mathcal{T}^{\infty}(P))$
\item For all $i,j \in \Omega$ we have
$S_{ij} U - U S_{ij} \in \mathcal{J}(\mathcal{T}^{\infty}(P))$
\item The family $(\overline{U}, \{ \overline{S_{ij}}\}_{i,j \in \Omega})$ generates $\cuntz(P)$.
\end{enumerate}
Therefore, $\{\overline{S_{ij}} \}_{i,j\in \Omega}$ is a system of $d \times d$ matrix units in $\mathcal{O}(P)$ and $\overline{U}$ is a unitary in $\mathcal{O}(P)$ that commutes with them and together they generate $\mathcal{O}(P)$.

\end{lemma}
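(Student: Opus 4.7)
My overall plan is to prove (1) and (2) by direct computation from the defining formulas, and then derive (3)--(6) uniformly via the following observation: each asserted relation is an identity of operators on the Fock space that already holds exactly on all but finitely many fibers, so that the differences annihilate $Q_m$ for all large $m$ and therefore lie in $\JJ(\toeplitz^{\infty}(P))$.

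For (1), I would first invoke item (2) of Theorem~\ref{theorem:cyclic-graph-decomposition} together with finiteness of $\Omega$ to produce $n_0$ such that for every $n\geq n_0$ and every $i,j\in\Omega$ one has $P^{nr+\ell}_{ij}>0$ whenever $\ell\equiv \sigma(j)-\sigma(i)\bmod r$; in particular $I_d\in Arv(P)_{nr}$ and $E_{ij}\in Arv(P)_{nr+\ell}$, so the operators $W_{I_d}^{(nr)}$ and $W_{E_{ij}}^{(nr+\ell)}$ together with their adjoints are well-defined elements of $\toeplitz^{\infty}(P)$. Both identities then reduce to the defining formulas: for $B\in Arv(P)_m$, applying the adjoint formula together with $I_d^*\cdot(E_{ij}B)=E_{ij}B$ yields $W_{I_d}^{(nr)*}W_{E_{ij}}^{(nr+\ell)}(B)=Gr(P^{m+\ell})*(E_{ij}B)=S_{ij}(B)$, and the same computation handles $U$.

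For (2), the underlying mechanism is that the supports $Gr(P^k)$ stabilize on each residue class of $k\bmod r$ as $k\to\infty$; this is the same phenomenon that powers Proposition~\ref{proposition:nograph}. Concretely, if an entry $(E_{ij}B)_{pq}$ with $B\in Arv(P)_m$ is non-zero, then $p=i$ and $P^m_{jq}>0$, so $m\equiv \sigma(q)-\sigma(j)\bmod r$ and hence $m+\ell\equiv \sigma(q)-\sigma(i)\bmod r$; item (2) of Theorem~\ref{theorem:cyclic-graph-decomposition} then forces $P^{m+\ell}_{iq}>0$ once $m$ exceeds a threshold depending only on $\Omega$, making the Schur truncation in $S_{ij}(B)$ trivial on all sufficiently high fibers. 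The same counting, combined with the identity $U^*=W_{I_d}^{(nr+r)*}W_{I_d}^{(nr)}$ derived from (1), handles both $U$ and $U^*$.

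With (2) in hand, items (3)--(5) reduce to short fiber-wise verifications: on a sufficiently high fiber, $S_{ij}$, $U$, and $U^*$ all act as left matrix multiplication by $E_{ij}$, $I_d$, and $I_d$ respectively, so $S_{ij}S_{tk}(B)=\delta_{jt}S_{ik}(B)$, $U^*U(B)=UU^*(B)=B$, and $S_{ij}U(B)=US_{ij}(B)$, whence the differences lie in $\JJ(\toeplitz^{\infty}(P))$. For (6), (2) identifies $\overline{p_i}$ with $\overline{S_{ii}}$; for each $(i,j)\in E(P^n)$, writing $n=kr+d$ with $d=\sigma(j)-\sigma(i)\in(-r,r)$ and $k\geq 0$ chosen accordingly, $U^k S_{ij}$ agrees with $W_{E_{ij}}^{(n)}$ on large fibers, and linearity reduces arbitrary $W_A^{(n)}$ to this case. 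The matrix-unit and unitarity assertions then drop out of (3)--(5) together with $S_{ij}^*=S_{ji}$. I expect the main obstacle to be the uniform bookkeeping of the two distinct largeness conditions (on $n$ in (1) and on $m$ in (2)), together with treating the case $i>j$ uniformly via $S_{ij}=S_{ji}^*$, so that (3) and (5) are argued by a single fiber-wise computation rather than by case analysis on the sign of $\sigma(j)-\sigma(i)$.
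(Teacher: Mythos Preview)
Your proposal is correct and follows essentially the same route as the paper. The only cosmetic differences are that the paper packages the support-stabilization argument behind your item (2) as the separately stated Proposition~\ref{proposition:nograph} (which it then invokes for both $S_{ij}$ and the adjoint case $S_{ji}=S_{ij}^*$), and in item (6) the paper first reduces to the degree-one generators $\{W_{E_{ij}}^{(1)}\}_{(i,j)\in E(P)}$ before expressing each $\overline{W_{E_{ij}}^{(1)}}$ as either $\overline{S_{ij}}$ or $\overline{U}\,\overline{S_{ij}}$, rather than handling general $W_{E_{ij}}^{(n)}$ directly as you do.
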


\begin{proof}\hfill

\begin{enumerate} 
\item 
Let $i,j\in \Omega$ be such that $i \leq j$ in the properly enumerated decomposition of $\Omega$, so that $\ell := \sigma(j) - \sigma(i) \geq 0$.
By item (2) of the cyclic decomposition Theorem \ref{theorem:cyclic-graph-decomposition}, there exists $n_0 \in \mathbb{N}$ such that $E_{ij} \in Arv(P)_{nr+\ell}$ and $I_d \in Arv(P)_{nr}$ for all $n \geq n_0$. Then for all $n\geq n_0$, $m\in \nn$ and $B\in Arv(P)_m$ we have
\begin{align*}
W_{I_d}^{(nr)*}W_{E_{ij}}^{(nr+\ell)}(B) & = Gr(P^{m+\ell})*(E_{ij}B) = S_{ij}(B) \\
W_{I_d}^{(nr)*}W_{I_d}^{(nr+r)}(B) & = Gr(P^{m+r})*B = U(B)
\end{align*}
so that
$$
S_{ij} = W_{I_d}^{(nr)*}W_{E_{ij}}^{(nr+\ell)} \ \text{ and } \ U = W_{I_d}^{(nr)*}W_{I_d}^{(nr +r)}
$$

\item Let $i \leq j \in \Omega$ be given. 
By the previous item and by Proposition~\ref{proposition:nograph} there exists $m_0 \in \nn$ such that for all $m \geq m_0$ and $B\in Arv(P)_m$ we have
\begin{align*}
S_{ij}(B) & = W_{I_d}^{(nr)*}W_{E_{ij}}^{(nr+\ell)} (B) = E_{ij}B  \\
U(B) & = W_{I_d}^{(nr)*}W_{I_d}^{(nr +r)}(B) = B
\end{align*}
Similarly, by taking adjoints, we have that
\begin{align*}
S_{ji}(B) & = W_{E_{ji}}^{(nr+\ell)*} W_{I_d}^{(nr)}(B) = E_{ji}B \\
U^*(B) & = W_{I_d}^{(nr +r)*}W_{I_d}^{(nr)}(B) = B
\end{align*}
proving the statement in all cases.

\item 
Let $i, j, t, k \in \Omega$ be given. By item~\eqref{coords}, there exists $m_0 \in \nn$ such that for all $m\geq m_0$ and $B\in Arv(P)_m$
we have that
$$
S_{ij} S_{tk} (B) = E_{ij} E_{tk} B = \delta_{jt} E_{ik} B = \delta_{jt} S_{ik}(B)
$$
Thus we have that $S_{ij}S_{tk} - \delta_{jt} S_{ik} \in \JJ(\toeplitz^\infty(P))$.

\item By item~\eqref{coords}, there exists $m_0 \in \nn$ such that for all $m\geq m_0$ and $B\in Arv(P)_m$ we have that
$$
U^*U(B) = U^*(B) = B \in Arv(P)_m \qquad \text{and} \qquad UU^*(B) = U^*(B) = B \in Arv(P)_m
$$
Thus we have that $U^*U - I, UU^*-I \in \JJ(\toeplitz^\infty(P))$

\item Let $i,j\in \Omega$ be given. By item~\eqref{coords}, there exists $m_0 \in \nn$ such that for all $m\geq m_0$ and $B\in Arv(P)_m$ we have the following element in $Arv(P)_{m+r+\ell}$ where $\ell = \sigma(j) - \sigma(i)$.
$$
S_{ij}U(B) = S_{ij}(B) = E_{ij}B = U(E_{ij}B) = US_{ij}(B)
$$
Thus we have that $S_{ij}U - US_{ij} \in \JJ(\toeplitz^\infty(P))$.

\item 
We first observe that since we are dealing with stochastic matrices over a \emph{finite} state space $\Omega$, it is in fact the case that $\mathcal{T}^{\infty}(P)$ is generated by $\ell^{\infty}(\Omega)$ and $\{W_{E_{ij}}^{(1)}\}_{(i,j)\in E(P)}$, where $E(P) = \{ \ (i,j) \ | \ P_{ij} >0 \ \}$. Indeed, since every $Arv(P)_n$ is finite dimensional, every $W_{A}^{(n)}$ can be written as a linear combination of elements of the form $W_{E_{ik}}^{(n)}$. Now, if $W_{E_{ik}}^{(n)}$ is non-zero, this means that $P^{(n)}_{ik} > 0$ and so there is a path of length $n$ from $i$ to $k$ given by $i=j_0 \rightarrow j_1 \rightarrow ... \rightarrow j_n = k$ and we would have that
$W_{E_{ik}}^{(n)} = W_{E_{j_0j_1}}^{(1)} \cdot ... \cdot W_{E_{j_{n-1}j_n}}^{(1)}$ so that every element $W^{(n)}_{E_{ik}}$ is in the algebra generated by $\ell^{\infty}(\Omega)$ and $\{W_{E_{ij}}^{(1)}\}_{(i,j) \in E(P)}$, and so 
$$
\mathcal{T}^{\infty}(P) = C^*\big(\ell^{\infty}(\Omega) \cup \{W_{E_{ij}}^{(1)}\}_{(i,j) \in E(P)}\big)
$$
Therefore $\mathcal{O}(P)$ is generated as a C*-algebra by the images of $\ell^{\infty}(\Omega)$ and $\{W_{E_{ij}}^{(1)} \}_{(i,j) \in E(P)}$ under $q:\mathcal{T}^{\infty}(P)\rightarrow \mathcal{O}(P)$.

Let us denote by $\cala$ the C*-subalgebra of $\cuntz(P) 
\cong \mathcal{T}^{\infty}(P) / \mathcal{J}(\mathcal{T}^{\infty}(P))$
generated by $\overline{U}$ and $\overline{S_{ij}}$ for $i,j \in \Omega$.
It follows from item~\eqref{coords} that $S_{ii} - p_i \in 
\JJ(\toeplitz^{\infty}(P))$, therefore, we have that $q(\ell^\infty(\Omega)) \subseteq \cala$. In order to complete
the proof that $\cala = \cuntz(P)$, it suffices to show that $\overline{W_{E_{ij}}^{(1)}} \in \cala$ for all $(i,j) \in E(P)$.

Let $(i,j) \in E(P)$, and suppose that $r>1$.
If $i \leq j$, then we must have by the cyclic decomposition theorem that $\sigma(j)- \sigma(i) = 1$ and 
$S_{ij}$ is an operator of degree one and by item~\eqref{coords}
we have that $S_{ij} - W_{E_{ij}}^{(1)} \in \mathcal{J}(\mathcal{T}^{\infty}(P))$. 
On the other hand, if $i > j$, then also by the cyclic decomposition theorem we must have
$\sigma(i)- \sigma(j) = r-1$ and  in that case $S_{ij}$ is an
operator of degree $-(r-1)$. Therefore $US_{ij}$ has degree $1$,
and by item~\eqref{coords} we have that $US_{ij} - W_{E_{ij}}^{(1)} \in \mathcal{J}(\mathcal{T}^{\infty}(P))$. Therefore, in both cases
we obtain that 
$\overline{W_{E_{ij}}^{(1)}} \in \cala$.

Finally, if $(i,j) \in E(P)$, and $r=1$, we have that $S_{ij}$ is
an operator of degree zero and by item~\eqref{coords} we have that $US_{ij} - W_{E_{ij}}^{(1)} \in \mathcal{J}(\mathcal{T}^{\infty}(P))$. Therefore, we also obtain that $\overline{W_{E_{ij}}^{(1)}} \in \cala$.
\end{enumerate}
\end{proof}

Recall from the discussion preceding \cite[Proposition 5.5]{dor-on-markiewicz} that there is a natural gauge group action $\alpha$ on $\mathcal{T}^{\infty}(P)$ uniquely determined by $\alpha_{\lambda}(W_A^{(n)}) = \lambda^n W_A^{(n)}$. Since $\mathcal{J}(\mathcal{T}^{\infty}(P))$ is gauge invariant by \cite[Theorem 5.6]{dor-on-markiewicz}, this gauge action passes to the quotient $\mathcal{O}(P)$, and we denote by $\mathcal{O}(P)_0$ the fixed point algebra.

\bigskip
\begin{proof}[\textbf{Proof of Theorem~\ref{theorem:main}}]
First note that $M_d(\mathbb{C}) \otimes C(\mathbb{T})$ is the universal C*-algebra generated by a system of $d \times d$ matrix units $e_{ij}$ and a unitary $u$ that commutes with them. Hence, by Lemma \ref{lemma:almost-universal} we obtain a surjective *-homomorphism $\psi : M_d(\mathbb{C}) \otimes C(\mathbb{T}) \rightarrow \mathcal{O}(P)$ that sends $e_{ij}$ to $\overline{S_{ij}}$ and $u$ to $\overline{U}$. It remains to show that $\psi$ is injective.

Let $\cala =\bigoplus_{\ell= 0}^{r-1}M_{|\Omega_{\ell}|}(\mathbb{C}) \otimes 1  \subseteq M_d(\mathbb{C}) \otimes C(\mathbb{T})$. 
First we note that $\psi$ restricted to $\cala$ is injective, since $\psi$ is already injective when restricted to the larger simple subalgebra $M_d(\cc) \otimes 1$. 

We now show that $\psi(\cala) = \cuntz(P)_0$. First note that 
$\cuntz(P)_0$ is generated by monomials of degree zero (according
to the gauge action) in the matrix units $(\overline{S_{ij}})$ and
the unitary $\overline{U}$, which commutes with the latter. 
Let $X \in \cuntz(P)_0$ be such a monomial. Products of matrix units are also matrix units, therefore there
exists $i,j\in \Omega$, $n\in\mathbb{Z}$ such that $X= \overline{S_{ij}}\, \overline{U}^n$. Hence,
the only way that $X$ has degree zero is if $n=0$ and $\sigma(i)=\sigma(j)$. Moreover,  $\cala$ is precisely generated
by all $e_{ij}$, $i,j\in \Omega$ such that $\sigma(i)=\sigma(j)$. Hence
$\psi(\cala)= \cuntz(P)_0$.

Next, we show $\psi$ is injective on the entire algebra $M_d(\cc)\otimes C(\torus)$. Given the identifications $M_d(\cc)\otimes C(\torus)
\cong C(\torus; M_d(\cc))$
and $M_d(\cc)\otimes 1 \cong M_d(\cc)$, let us consider the \emph{faithful} conditional expectation $\Gamma_0: M_d(\cc)\otimes C(\torus) \to M_d(\cc) \otimes 1$ given by
$$
\Gamma_0(T) = \int_{\torus} T(z)\, dz
$$
where $dz$ represents normalized Haar measure on the circle. Note that in
particular, for all $i,j \in \Omega$ and $n \in \mathbb{Z}$,
$$
\Gamma_0(e_{ij} u^n) = \delta_{0,n} \, e_{ij} 
$$
We now take $E_0$ to be the \emph{faithful} conditional expectation from $M_d(\mathbb{C}) \otimes 1$ to $\bigoplus_{\ell= 0}^{r-1}M_{|\Omega_{\ell}|}(\mathbb{C}) \otimes 1$, and let $\Phi_0: \cuntz(P) \to \cuntz(P)_0$ denote the canonical conditional expectation into the fixed point algebra associated
with the gauge action. We then have that $\Phi_0  \psi = \psi E_0 \Gamma_0$. Indeed, since for all $i,j \in \Omega, n\in\nn$,
\begin{align*}
\Phi_0 \psi(e_{ij} u^n) & = \Phi_0(\overline{S_{ij}}\,\overline{U}^n) 
= \delta_{0,n} \delta_{\sigma(i), \sigma(j)} \, \overline{S_{ij}} 
= \delta_{0,n} \delta_{\sigma(i), \sigma(j)} \, \psi(e_{ij} ) 
 = \delta_{0,n} \psi(E_0(e_{ij}) ) \\
 & = \psi(E_0(\Gamma_0(e_{ij} u^n))),
\end{align*}
and since monomials are total in the algebra, we have $\Phi_0 \circ \psi = \psi E_0 \Gamma_0$.

Finally, suppose towards a contradiction that $\psi$ is not injective. Then
there exists a positive non-zero $T \in M_d(\cc)\otimes C(\torus)$
such that $\psi(T) = 0$. In that case $\Phi_0(\psi(T))=0$. Hence 
$\psi(E_0(\Gamma_0(T))) = \Phi_0(\psi(T)) = 0$. By injectivity of
$\psi$ on the image of $E_0$, which is the algebra $\cala$, we 
obtain $E_0(\Gamma_0(T))=0$. We reach a contradiction since $E_0$
and $\Gamma_0$ are faithful conditional expectations.
\end{proof}

Now that we have filled the gap in the computation of the Cuntz-Pimsner algebra of a finite irreducible stochastic matrix, we compute the extension groups for it, which will be useful to us later in Section \ref{Sec:Classification}.

Based on the work of \cite{Paschke-Salinas}, one has a description of $\Ext_s(B \otimes M_d)$ for any unital C*-algebra $B$, for which $\Ext_s(B)$ contains no elements of order $d$, as follows. For any unital extension $\eta\in E(B \otimes M_d)$, we define a map $[\eta]_s \mapsto ([\iota_*\eta]_s,[j_*\eta]_s)$ into $\Ext_s(B) \otimes \mathbb{Z}_d$ by setting $\iota_*\eta = \eta |_{B \otimes I}$ and $j_*\eta = \eta|_{I \otimes M_d}$.
Then \cite[Proposition 2.2]{Paschke-Salinas} shows that this map induces an isomorphism of semigroups
$$
\Ext_s(B \otimes M_d) \cong \{ \ (d[\eta] + \epsilon(\ell)[\tau], \ell) \in \Ext_s(B) \otimes \mathbb{Z}_d \ | \ \eta \in E(B), \ \ell \in \mathbb{Z} \ \}
$$
where $\tau$ is a trivial strongly unital extension. By Example \ref{ex:torus-extensions} we have that $\epsilon(\ell)[\eta]_s = [\eta]_s$ for all $\eta \in \Ext_s(C(\mathbb{T}))$, so that
$$
\Ext_s(C(\mathbb{T}) \otimes M_d) \cong \{ (ds, \ell) \in \mathbb{Z} \times \mathbb{Z}_d \ | \ s \in \mathbb{Z}, \ \ell \in \mathbb{Z} \ \}
$$
so that $\Ext_s(C(\mathbb{T}) \otimes M_d) \cong d \mathbb{Z} \times \mathbb{Z}_d$ and $\Ext_w(C(\mathbb{T}) \otimes M_d) \cong \mathbb{Z}$ as the projection (and division by $d$) onto the first coordinate of $\Ext_s(C(\mathbb{T}) \otimes M_d)$. Since $\Ext_w(C(\mathbb{T}) \otimes M_d) \cong \mathbb{Z}$ is the quotient of $\Ext_s(C(\mathbb{T}) \otimes M_d) \cong d \mathbb{Z} \times \mathbb{Z}_d$ by the subgroup $\{ \ \epsilon(n)[\tau]_s \ | \ n\in \mathbb{Z} \ \} \cong \mathbb{Z}_d$, we can identify the subgroup $\{ \ \epsilon(n)[\tau]_s \ | \ n\in \mathbb{Z} \ \}$ of $\Ext_s(C(\mathbb{T}) \otimes M_d)$ with the image $\{ \ [j_*\eta]_s \ | \ [\eta]_s \in \Ext_s(C(\mathbb{T}) \otimes M_d) \ \} \cong \mathbb{Z}_d$.

Note that any automorphism $\beta$ of $C(\mathbb{T})\otimes M_d$ induces an automorphism $\beta_s$ of $\Ext_s(C(\mathbb{T}) \otimes M_d)$ by composition $[\eta]_s \mapsto [\eta \circ \beta]_s$. Furthermore, every unitary element $u\in \mathcal{U}(C(\mathbb{T}) \otimes M_d)$ defines an automorphism $Ad_u$ of $C(\mathbb{T})\otimes M_d$ by way of $Ad_u(f)(z) = u^*(z)f(z)u(z)$ for $f\in C(\mathbb{T}; M_d)$ and $z\in \mathbb{T}$. Denote by $Aut_{C(\mathbb{T})}(C(\mathbb{T})\otimes M_d)$ the collection of $C(\mathbb{T})$-bimodule *-automorphisms of $C(\mathbb{T})\otimes M_d$.

\begin{prop} \label{prop:automorphism-invert}
Let $\eta$ be a unital extension and let $\beta \in Aut(C(\mathbb{T})\otimes M_d)$ be an automorphism. Up to the identification $\Ext_s(C(\mathbb{T})\otimes M_d) \cong d\mathbb{Z} \otimes \mathbb{Z}_d$ given above, we have that either $\beta_s[\eta] = [\eta] = ([\iota_*\eta],[j_*\eta])$ or $\beta_s[\eta] = (-[\iota_*\eta],[j_*\eta])$.
\end{prop}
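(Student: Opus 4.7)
First, I would establish a normal form for any $*$-automorphism $\beta$ of $C(\torus)\otimes M_d$. Its restriction to the center $Z(C(\torus)\otimes M_d)\cong C(\torus)$ is a $*$-automorphism, hence by Gelfand duality is given by precomposition with some homeomorphism $\varphi:\torus\to\torus$. Setting $\tilde{\varphi}(f)(z):=f(\varphi(z))$ for $f \in C(\torus;M_d)$, the composition $\beta\circ\tilde{\varphi}^{-1}$ fixes the center and is therefore $C(\torus)$-linear. Pointwise, every $*$-automorphism of $M_d$ is inner, so any $C(\torus)$-linear $*$-automorphism is implemented by a continuous family of inner automorphisms, i.e., a continuous map $\torus\to PU(d)$; the obstruction to lifting such a map across the principal $\torus$-bundle $U(d)\to PU(d)$ is classified by $H^2(\torus;\zz)=0$. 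Hence $\beta=\Ad_u\circ\tilde{\varphi}$ for some unitary $u\in\mathcal{U}(C(\torus)\otimes M_d)$.

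Next, I would compute $\iota_*(\eta\circ\beta)$ and $j_*(\eta\circ\beta)$. For $g\in C(\torus)$, the element $\tilde{\varphi}(g\otimes I)=(g\circ\varphi)\otimes I$ is central, so it is fixed by $\Ad_u$ and $\beta(g\otimes I)=(g\circ\varphi)\otimes I$. Therefore $\iota_*(\eta\circ\beta)(g)=\iota_*\eta(g\circ\varphi)$, whose Fredholm index, which realizes the class in $\Ext_s(C(\torus))\cong\zz$ via Example~\ref{ex:torus-extensions}, equals the winding number of $\varphi$ times $[\iota_*\eta]_s$; this winding number is $+1$ when $\varphi$ is orientation-preserving and $-1$ otherwise. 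Since $\tilde{\varphi}$ fixes $I\otimes M_d$ pointwise, $\beta(I\otimes m)=u^*(I\otimes m)u$, yielding $j_*(\eta\circ\beta)=\Ad_{\eta(u)}\circ j_*\eta$.

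The main obstacle is to show that $\Ad_{\eta(u)}$ acts trivially on $[j_*\eta]_s\in\Ext_s(M_d)\cong\zz_d$. From the preliminaries on extensions, conjugation by a unitary $w\in\QQ(H)$ of Fredholm index $n$ implements $\epsilon(-n)$ on $\Ext_s(M_d)$, which via Example~\ref{ex:matrix-extension} shifts the defect by $n$ modulo $d$, so it suffices to show $\ind(\eta(u))\in d\zz$. By Morita invariance, $K_1(C(\torus)\otimes M_d)\cong K_1(C(\torus))\cong\zz$, with generator the class of $z\otimes I$ where $z$ denotes the identity function on $\torus$. Hence the class of $u$ in $K_1$ is $k$ times this generator for some $k\in\zz$, and applying the index invariant of $\eta$ yields $\ind(\eta(u))=k\cdot\ind(\eta(z\otimes I))=k\cdot[\iota_*\eta]_s$. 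The description of the range of the Paschke-Salinas map given immediately before the proposition forces $[\iota_*\eta]_s\in d\zz$, so $\ind(\eta(u))\in d\zz$. Combining with the previous paragraph, $\beta_s[\eta]_s=(\pm[\iota_*\eta]_s,[j_*\eta]_s)$ with the plus sign when $\varphi$ is orientation-preserving and the minus sign otherwise, as claimed.
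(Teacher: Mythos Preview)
Your proof is correct and follows the same overall architecture as the paper: factor $\beta = \Ad_u \circ \tilde{\varphi}$ using $H^2(\torus;\zz)=0$, then treat the two coordinates of $\Ext_s(C(\torus)\otimes M_d)\cong d\zz\times\zz_d$ separately. The treatment of the $\iota_*$-coordinate via the winding number of $\varphi$ is the same in both.

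The genuine difference is in the $j_*$-coordinate. The paper argues abstractly: $\beta_s$ is a group homomorphism commuting with the $\epsilon$-action and fixing the neutral element $[\tau]_s$, hence it fixes the torsion subgroup $\{\epsilon(n)[\tau]_s\}\cong\zz_d$, and from this (together with preservation of the first coordinate already established) concludes $[j_*(\eta\circ\beta)]=[j_*\eta]$. You instead compute directly that conjugation by $\eta(u)$ is trivial on $\Ext_s(M_d)$ by showing $\ind(\eta(u))\in d\zz$: writing $[u]_1=k[z\otimes I]_1$ in $K_1(C(\torus)\otimes M_d)\cong\zz$ gives $\ind(\eta(u))=k\cdot[\iota_*\eta]_s$, which lies in $d\zz$ by the Paschke--Salinas description of the range. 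Your route is more concrete and self-contained --- it isolates exactly why the inner part of $\beta$ cannot shift the defect --- whereas the paper's route leans on the global structure of $\beta_s$ as an endomorphism of $d\zz\times\zz_d$ and the identification of $\Ker\lambda_B$ with the $\zz_d$-summand.
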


\begin{proof}
Let $\beta \in Aut(C(\mathbb{T}) \otimes M_d)$ be some *-automorphism. Then $\beta$ induces an automorphism $\beta_*$ on the primitive ideal spectrum $\mathbb{T}$, which then induces an automorphism $(\beta_*)^*$ back on $C(\mathbb{T}) \otimes M_d$ given by $(\beta_*)^*(f)(z) = f(\beta_*^{-1}(z))$. It is easy to see that $[j_*\eta] = [j_* \eta \circ (\beta_*)^*]$ since $(\beta_*)^*(I\otimes M_d) = I \otimes M_d$. Since the induced map $((\beta_*)^*)_s$ on $\Ext_s(C(\mathbb{T})\otimes M_d) \cong d\mathbb{Z} \times \mathbb{Z}_d$ is the identity on the second coordinate $\mathbb{Z}_d$, we must have that $[\iota_* (\eta \circ (\beta_*)^*)]$ is either $[\iota_*\eta]$ or $-[\iota_*\eta]$. Hence, by composing with the inverse of $(\beta_*)^*$ if necessary, we may assume that ${\beta}_* = Id_{\mathbb{T}}$. 

By \cite[Corollary 5.46]{Morita-continuous-trace} we have that $\beta \in Aut_{C(\mathbb{T})}(C(\mathbb{T}) \otimes M_d)$, so that by \cite[Lemma 4.28]{Morita-continuous-trace}, there is a point-norm continuous map $\sigma : \mathbb{T} \rightarrow Aut(M_d)$ such that $\beta(f)(z) = \sigma_z(f(z))$. Since the second cohomology group of the torus $H^2(\mathbb{T} ; \mathbb{Z})$ vanishes, by \cite[Theorem 5.42]{Morita-continuous-trace}, there is a unitary element $u \in \mathcal{U}(C(\mathbb{T}) \otimes M_d)$ such that $\beta = Ad_u$.
Then $Ad_u$ induces a map on $\Ext_s(C(\mathbb{T})\otimes M_d)$, so that by the homomorphism property of the Fredholm index, we get that,
$$
[\iota_*\eta \circ Ad_u] = \ind(\eta(Ad_u(z\otimes I))) = \ind(\eta(z \otimes I)) = [\iota_* \eta]
$$

Next, since the image $\{ \ [j_*\eta]_s \ | \ [\eta]_s \in \Ext_s(C(\mathbb{T}) \otimes M_d) \ \} \cong \mathbb{Z}_d$ can be identified with the subgroup $\{ \ \epsilon(n)[\tau]_s \ | \ n\in \mathbb{Z} \ \}$, in order to show that $[j_*\eta \circ \beta] = [j_*\eta]$, it will suffice to show that $\beta_s(\epsilon(n)[\tau]_s) = \epsilon(n)[\tau]_s$. However, since $\beta_s$ commutes with $\epsilon(n)$, it will suffice to show that $\beta_s([\tau]_s) = [\tau]_s$. But $\beta_s$ is a group homomorphism, so it must send $[\tau]_s$ to itself. Hence, we obtain that $[j_*\eta \circ \beta] = [j_*\eta]$.
\end{proof}

\section{Non-commutative Choquet boundary} \label{Sec:Choquet}

In this section we first find all the irreducible representations of $\toeplitz(P)$ for a finite irreducible stochastic matrix $P$. We then determine the boundary representations with respect to $\tensor(P)$ among them. We show that any representation annihilating $\JJ(P):=\JJ(\toeplitz(P))$ has the unique extension property when restricted to $\tensor(P)$, and find conditions that guarantee that an irreducible representation supported on $\JJ(P)$ is boundary or not.

As given in \cite[Theorem 5.6]{dor-on-markiewicz}, the C*-algebra $\toeplitz^{c}(P)$ is the one generated by both $\toeplitz^{\infty}(P)$ and $\toeplitz(P)$, and it too has a gauge action which is the restriction of the gauge action of $\LL(\FF_{Arv(P)})$, which satisfies $\alpha_{\lambda}(S_A^{(n)}) = \lambda^nS_A^{(n)}$ and $\alpha_{\lambda}(W_A^{(n)}) = \lambda^nW_A^{(n)}$, so that $\toeplitz^c(P)$ is gauge invariant, and $\JJ(\toeplitz^c(P))$ is a closed gauge invariant two-sided ideal by \cite[Theorem 5.6]{dor-on-markiewicz}. 

As discussed in \cite[Section 4]{dor-on-markiewicz} for general subproduct systems, Fourier coeficients $\Phi_k$ on $\toeplitz^c(P)$ may be defined in such a way that every $T\in \toeplitz^c(P)$ can be written as $\sum_{k= -\infty}^{\infty}\Phi_k(T)$, where this sum convergens Cesaro. That is, where $\sum_{k=-n}^n\big(1 - \frac{|k|}{n+1} \big)\Phi_k(T)$ converges in norm to $T$. From now on, we will denote $W_{ij}:= W_{E_{ij}}$ for $i,j\in \Omega$.

\begin{prop} \label{prop:alt-desc-for-cuntz-ideal}
Let $P$ be an irreducible stochastic matrix on $\Omega$ of size $d$. Then 
$\JJ(\toeplitz^c(P))$ is the two sided ideal generated by $\{ Q_n \}_{n \in \nn}$ inside $\toeplitz^c(P)$.
\end{prop}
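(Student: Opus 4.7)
The plan is to prove both inclusions separately, where the harder work lies in exhibiting each $Q_n$ as a concrete element of $\toeplitz^c(P)$, after which an approximate-unit argument closes the proposition.

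For the inclusion $\langle Q_n : n \in \nn \rangle \subseteq \JJ(\toeplitz^c(P))$, it suffices to show that $Q_n \in \toeplitz^c(P)$ for each $n$, since the defining condition $\lim_m \|Q_n Q_m\| = 0$ is immediate from the mutual orthogonality $Q_n Q_m = \delta_{nm} Q_n$. To realize $Q_n$ inside $\toeplitz^c(P)$, I would use the operators $W_{I_d}^{(k)} \in \toeplitz^\infty(P) \subseteq \toeplitz^c(P)$. By Lemma~\ref{lemma:almost-universal}(2) together with Proposition~\ref{proposition:nograph}, the products $W_{I_d}^{(k)*}W_{I_d}^{(k)}$ and $W_{I_d}^{(k)}W_{I_d}^{(k)*}$ act (up to correction by Schur multiplication by $Gr(P^m)$ in the formal definition) as the identity on $Arv(P)_m$ once $m$ is large enough. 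Since $W_{I_d}^{(k)*}$ vanishes on $Arv(P)_0 \oplus \cdots \oplus Arv(P)_{k-1}$, the difference $I - W_{I_d}^{(k)}W_{I_d}^{(k)*}$ becomes a projection supported on the first few fibers (up to terms that live in the ideal already). Taking successive differences of such operators (for $k$ and $k+1$, or for multiples of the period $r$) yields each $Q_n$ as an element of $\toeplitz^c(P)$.

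For the reverse inclusion $\JJ(\toeplitz^c(P)) \subseteq \langle Q_n : n \in \nn \rangle$, I would adapt the argument behind Proposition~\ref{prop:complete-norm-cuntz} (originating in \cite[Theorem~3.1]{viselter2}). Once $Q_n \in \toeplitz^c(P)$ is established, the finite partial sums $Q_{[0,m]} = Q_0 + \dots + Q_m$ form an increasing sequence of projections in $\toeplitz^c(P)$, and the standard approximate-unit argument from the subproduct system setting shows that $\{Q_{[0,m]}\}_{m \in \nn}$ is an approximate identity for $\JJ(\toeplitz^c(P))$. Given any $T \in \JJ(\toeplitz^c(P))$, we then have $T = \lim_m T\,Q_{[0,m]}$ in norm, and since each $T\,Q_{[0,m]}$ manifestly lies in the two-sided ideal generated by $\{Q_n\}$, norm-closedness finishes the proof.

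A subtlety in the approximate-unit step is that the naïve hypothesis $\lim_n \|TQ_n\| = 0$ does not automatically upgrade to $\lim_n \|T Q_{[n,\infty)}\| = 0$. To handle this, I would reduce to homogeneous $T$ of fixed degree $k$ via Cesaro summability of the Fourier expansion with respect to the gauge action $\alpha$ (noting that $\JJ(\toeplitz^c(P))$ is $\alpha$-invariant, so each $\Phi_k(T)$ stays in $\JJ$). For a homogeneous element of degree $k$, the fiber-shifting structure forces $\|T Q_{[n,\infty)}\|$ to be controlled by $\sup_{m \geq n} \|T Q_m\|$, which does tend to zero.

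The principal obstacle is Step~2, the concrete realization of $Q_n$ inside $\toeplitz^c(P)$: this is where the enlargement from $\toeplitz(P)$ to $\toeplitz^c(P)$ by adjoining the $W$-operators is essential, because the $Q_n$ are generally not reachable from the weighted shifts $S_A^{(n)}$ alone. Once the $Q_n$ are installed in $\toeplitz^c(P)$, both inclusions fall out of standard Fourier/approximate-unit machinery.
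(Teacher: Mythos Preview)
Your treatment of the reverse inclusion $\JJ(\toeplitz^c(P)) \subseteq \langle Q_n \rangle$ is correct and is exactly the paper's argument: reduce to homogeneous $\Phi_k(T)$ via Cesaro summability and gauge-invariance of $\JJ(\toeplitz^c(P))$, then use that for homogeneous $T$ one has $\|T Q_{[n+1,\infty)}\| = \sup_{m \geq n+1}\|TQ_m\| \to 0$, so $T Q_{[0,n]} \to T$.

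However, you have misidentified the ``principal obstacle'' and in doing so made a factual error. The projections $Q_n$ \emph{are} reachable from the weighted shifts alone: by \cite[Proposition~5.2]{dor-on-markiewicz} (quoted later in this paper in the proof of Proposition~\ref{prop:annihi-auto-uep}) one has
\[
Q_{[0,m-1]} \;=\; I \;-\; \sum_{(i,j)\in E(P^{m})} S^{(m)}_{E_{ij}} S^{(m)*}_{E_{ij}},
\]
so $Q_n = Q_{[0,n]} - Q_{[0,n-1]} \in \toeplitz(P) \subseteq \toeplitz^c(P)$ immediately. The paper dispatches this direction in one line by citing that reference; the enlargement to $\toeplitz^c(P)$ plays no role here.

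Your proposed workaround via $W_{I_d}^{(k)}$ is also shaky on its own terms: the operator $W_{I_d}^{(k)}$ is only defined when $I_d \in Arv(P)_k$, i.e.\ when every diagonal entry of $P^k$ is positive, which for an $r$-periodic matrix requires $k$ to be a sufficiently large multiple of $r$. So ``taking successive differences for $k$ and $k+1$'' does not literally make sense, and getting down to a single $Q_n$ for small $n$ from these operators would require additional argument. None of this is needed once you use the $S$-shift identity above.
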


\begin{proof}
By \cite[Proposition 5.2]{dor-on-markiewicz} we see that $Q_n \in \toeplitz(P) \subset \toeplitz^c(P)$, and since $\|Q_n Q_m\| \rightarrow 0$ as $m$ goes to infinity, we see that $Q_n \in \JJ(\toeplitz^c(P))$. 

For the reverse inclusion, let $T\in \JJ(\toeplitz^c(P))$, and write $T = \sum_{k = -\infty}^{\infty} \Phi_k(T)$ as a Cesaro convergent sum where $\Phi_k(T)$ maps $Arv(P)_n$ to $Arv(P)_{n+k}$ if $n+k \geq 0$ and $\{0\}$ otherwise. Further notice that $\Phi_k(T) \in \JJ(\toeplitz^c(P))$ for all $k \in \mathbb{Z}$, since by \cite[Theorem 5.6]{dor-on-markiewicz} we have that $\JJ(\toeplitz^c(P))$ is gauge invariant. In this case, we have that $\| \Phi_k(T) Q_{[n+1 , \infty)} \| = \sup_{m\geq n+1}\| \Phi_k(T)Q_m\| \rightarrow 0$. Hence, since $\Phi_k(T) Q_{[0 , n]}$ is in the ideal generated by $\{ Q_n \}_{n\in \nn}$, we see that $\Phi_k(T)$ is in the closed ideal generated by $\{ Q_n \}_{n \in \nn}$ and so must be $T$ by Cesaro approximation.
\end{proof}

For a finite irreducible stochastic matrix $P$ with state set $\Omega$ of size $d$, we have that $\ell^{\infty}(\Omega)$ is faithfully represented in $B(\ell^2(\Omega))$ by diagonal matrix multiplication on columns. Hence by \cite[Corollary 2.74]{Morita-continuous-trace}, this faithful *-representation promotes to a faithful *-representation $\pi : \LL(\FF_{Arv(P)}) \rightarrow B(\FF_{Arv(P)} \otimes_{id}\ell^2(\Omega))$ given by $\pi(T) (\xi \otimes h) = T\xi \otimes h$. Note that $\FF_{Arv(P)} \otimes_{id} \mathbb{C} e_k$ is a reducing subspace for $\pi(\toeplitz^c(P))$ for each $k \in \Omega$.

\begin{notation} \label{notation:state-invariant-fock}
For a state $k\in \Omega$ we will find it useful to denote $Arv(P)_{n,k}:= Arv(P)_n \otimes \mathbb{C} e_k$, and $\FF_{P,k} : = \oplus_{n=0}^{\infty} Arv(P)_{n,k} = \FF_{Arv(P)} \otimes_{id} \mathbb{C} e_k$, the reducing Hilbert space for $\pi(\toeplitz^c(P))$ mentioned above, so that $\FF_{Arv(P)} \otimes \ell^2(\Omega) = \oplus_{k\in \Omega} \FF_{P,k}$. For fixed $n$ we also denote for $i\in \Omega$ with $(i,k)\in Gr(P^n)$ the elements $e_{ik}^{(n)}:= E_{ik} \otimes e_k \in Arv(P)_{n,k}$ which comprise a finite orthonormal basis for each $Arv(P)_{n,k}$, so that for varying $n\in \mathbb{N}$ and $i\in \Omega$ with $(i,k)\in E(P^n)$ the collection $\{e_{ik}^{(n)} \}$ is an orthonormal basis for $\FF_{P,k}$.
\end{notation}

\begin{prop} \label{prop:irreducible-cuntz}
Let $P$ be an irreducible stochastic matrix over $\Omega$ of size $d$. Then for each $\pi_k : \toeplitz^c(P) \rightarrow B(\FF_{P,k})$ given by $\pi_k(T) = \pi(T) |_{\FF_{P,k}}$ we have that $\pi_k(\toeplitz(P))$ is an irreducible subalgebra of $B(\FF_{P,k})$.
\end{prop}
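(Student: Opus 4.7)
The plan is to exhibit an element of $\pi_k(\toeplitz(P))$ that acts as the rank-one projection onto $\mathbb{C}\, e_{kk}^{(0)}$, and separately argue that $e_{kk}^{(0)}$ is cyclic. The standard consequence will then be that $\pi_k(\toeplitz(P)) \supseteq \mathcal{K}(\FF_{P,k})$, from which irreducibility is immediate.

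The candidate for the rank-one projection is $\pi_k(P_0)$, where
$$
P_0 := p_k - \sum_{l:\, (k,l) \in E(P)} S_{E_{kl}}^{(1)} \bigl(S_{E_{kl}}^{(1)}\bigr)^* \;\in\; \toeplitz(P).
$$
Using the explicit formulas for the subproduct maps $U_{n,m}$ and the induced shifts, a direct computation gives, for each basis vector $e_{ik}^{(t)}$, that $\pi_k\bigl(S_{E_{kl}}^{(1)} (S_{E_{kl}}^{(1)})^*\bigr)\, e_{ik}^{(t)}$ equals $\delta_{ik}\,(P_{kl}\, P^{(t-1)}_{lk} / P^{(t)}_{kk})\, e_{kk}^{(t)}$ when $t \geq 1$, and vanishes when $t = 0$ (since $(S_{E_{kl}}^{(1)})^*$ annihilates $Arv(P)_0$). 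Summing over $l$ and applying the Chapman--Kolmogorov identity $\sum_l P_{kl}\, P^{(t-1)}_{lk} = P^{(t)}_{kk}$ collapses the sum to $\delta_{ik}\, \mathbf{1}_{t \geq 1}\, e_{kk}^{(t)}$. Subtracting from $\pi_k(p_k)\, e_{ik}^{(t)} = \delta_{ik}\, e_{ik}^{(t)}$ yields the desired claim that $\pi_k(P_0)$ is the projection onto $\mathbb{C}\, e_{kk}^{(0)}$.

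Cyclicity of $e_{kk}^{(0)}$ is more routine: given any basis vector $e_{ik}^{(n)}$ with $(i, k) \in E(P^n)$, the irreducibility of $P$ supplies a path $i = j_0 \to j_1 \to \cdots \to j_n = k$ in $E(P)$, and the product $\pi_k\bigl(S_{E_{j_0 j_1}}^{(1)} \cdots S_{E_{j_{n-1} j_n}}^{(1)}\bigr)\, e_{kk}^{(0)}$ is a positive scalar multiple of $e_{ik}^{(n)}$. Hence $\pi_k(\toeplitz(P))\, e_{kk}^{(0)}$ contains every basis vector and is dense in $\FF_{P,k}$.

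With both ingredients in place, the standard argument concludes: for any $a, b \in \pi_k(\toeplitz(P))$, the element $a\, \pi_k(P_0)\, b^*$ is the rank-one operator $\eta \mapsto \langle \eta, b\, e_{kk}^{(0)}\rangle\, a\, e_{kk}^{(0)}$, and the vectors $a\, e_{kk}^{(0)}$ and $b\, e_{kk}^{(0)}$ trace dense subsets of $\FF_{P,k}$ as $a, b$ vary. Therefore $\pi_k(\toeplitz(P)) \supseteq \mathcal{K}(\FF_{P,k})$, and irreducibility follows. The chief technical point will be the Chapman--Kolmogorov cancellation in the first step; everything else is essentially path-combinatorics in the directed graph of $P$.
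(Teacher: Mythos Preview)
Your proof is correct. Let me briefly compare with the paper's argument, since the organization is different.

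The paper argues directly via invariant subspaces: given a nonzero invariant subspace $H'$, it uses the fiber projections $Q_n$ (invoking the fact $Q_n \in \toeplitz(P)$ as a black box from prior work) together with the diagonal projections $p_j$ to locate a basis vector $e_{jk}^{(n_0)}$ inside $H'$, then applies $S^{(n_0)*}_{E_{jk}}$ to reach $e_{kk}^{(0)}$, and finally uses forward shifts to show $H' = \FF_{P,k}$.

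Your route is to first manufacture a specific rank-one projection and then conclude irreducibility via $\pi_k(\toeplitz(P)) \supseteq \KK(\FF_{P,k})$. It is worth noting that your element $P_0$ is exactly $p_k Q_0$: from the identity $Q_0 = I - \sum_{(i,j)\in E(P)} S^{(1)}_{E_{ij}} S^{(1)*}_{E_{ij}}$ one has $p_k Q_0 = p_k - \sum_{l:(k,l)\in E(P)} S^{(1)}_{E_{kl}} S^{(1)*}_{E_{kl}}$. So you have recovered, by an explicit Chapman--Kolmogorov computation, the same key operator the paper obtains from the formula for $Q_0$, and your argument is thereby self-contained (no appeal to $Q_n \in \toeplitz(P)$ needed). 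One small point of phrasing: the existence of a length-$n$ path from $i$ to $k$ follows simply from $(i,k)\in E(P^n)$, not from irreducibility of $P$ per se; the rest of your cyclicity and compacts argument is fine as written.
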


\begin{proof}
By \cite[Proposition 5.2]{dor-on-markiewicz} we see that $Q_n\in \toeplitz(P)$ for every $n\in \nn$. Let $0 \neq H' \subseteq \FF_{P,k}$ be some non-zero invariant subspace. Since $\{\pi_k(Q_{[0,n]})\}$ converges SOT to the identity on $\FF_{P,k}$, there is some minimal $n_0 \in \nn$ such that $\pi_k(Q_{n_0}) \xi \neq 0$ for some $\xi \in H'$. In this case, $0 \neq \pi_k(Q_{n_0}) \xi = A \otimes e_k \in H' \cap Arv(P)_{n_0,k}$ for some $A \in Arv(P)_{n_0}$, so that there exists $j\in \Omega$ and some non-zero scalar $c\in \mathbb{C}$ with $0 \neq e^{(n_0)}_{jk} = c \cdot \pi_k(p_jQ_{n_0})\xi \in H'$ where $(j,k) \in E(P^{n_0})$. This means that $e^{(0)}_{kk}= c_1 \pi_k(S^{({n_0})*}_{E_{jk}})(e^{(n_0)}_{jk}) \in H'$, where $c_1 > 0 $ is some scalar. 

Thus, for $m \geq 0$ if $e^{(m)}_{ik}$ is some vector in $Arv(P)_{m,k}$, we see that $e^{(m)}_{ik} = c_2 \pi_k(S^{(m)}_{E_{ik}})(e^{(0)}_{kk}) \in H'$ where $c_2 >0$ is some scalar.
This shows that the set of elements $e^{(m)}_{ik}$ for all $m\geq 0$ and $(i,k) \in E(P^m)$ is in $H'$, and this set of elements is an orthonormal basis for $\FF_{P,k}$, and so $H' = \FF_{P,k}$.
\end{proof}

Hence, we see that $\pi$ decomposes into $d = |\Omega|$ irreducible representations $\pi_k$ as above, so that $\pi = \oplus_{k\in \Omega}\pi_k : \toeplitz^c(P) \rightarrow \oplus_{k\in \Omega} B(\FF_{P,k})$. We next show that each $\pi_k |_{\toeplitz(P)}$ is in a distinct unitary equivalence class of irreducible representations for $\toeplitz(P)$.

\begin{prop} \label{prop:equivalent-representations}
Let $P$ be a finite irreducible stochastic matrix on $\Omega$ and $k, k' \in \Omega$ be \emph{distinct} indices. Then $\pi_k |_{\toeplitz(P)}$ and $\pi_{k'}|_{\toeplitz(P)}$ are not unitarily equivalent.
\end{prop}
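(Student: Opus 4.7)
The plan is to exhibit a single element $T \in \toeplitz(P)$ such that $\pi_k(T)$ and $\pi_{k'}(T)$ have different rank. Since unitarily equivalent representations send each element to unitarily equivalent (hence equal-rank) operators, this suffices to conclude the irreducible representations $\pi_k|_{\toeplitz(P)}$ and $\pi_{k'}|_{\toeplitz(P)}$ are inequivalent.

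The natural candidate is $T = p_k Q_0$, where $Q_0 \in \LL(\FF_{Arv(P)})$ is the projection onto the zeroth fiber. By \cite[Proposition 5.2]{dor-on-markiewicz} we have $Q_n \in \toeplitz(P)$ for all $n$ (in particular, $Q_0 \in \toeplitz(P)$), while $p_k \in \ell^{\infty}(\Omega) \subseteq \toeplitz(P)$; thus $T \in \toeplitz(P)$.

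I would then compute each side directly using Notation~\ref{notation:state-invariant-fock}. The zeroth fiber inside $\FF_{P,k}$ is $Arv(P)_{0,k} = \mathbb{C}\cdot e_{kk}^{(0)}$, one-dimensional and spanned by the vacuum vector $E_{kk}\otimes e_k$. Hence $\pi_k(Q_0)$ is the rank-one projection onto this line. Moreover $p_k \cdot E_{kk} = E_{kk}$, so $\pi_k(T) = \pi_k(p_k Q_0)$ is again this rank-one projection. On the other hand, $Arv(P)_{0,k'} = \mathbb{C}\cdot e_{k'k'}^{(0)}$ is spanned by $E_{k'k'}\otimes e_{k'}$, and since $k \neq k'$ we have $p_k \cdot E_{k'k'} = 0$, so $\pi_{k'}(T) = 0$, which has rank $0$.

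I do not anticipate a real obstacle: the argument is a direct calculation exploiting the fact that the elements $p_k \in \ell^\infty(\Omega)$ see distinct ``vacuum states'' in the decomposition $\FF_{Arv(P)} \otimes \ell^2(\Omega) = \bigoplus_{k\in\Omega} \FF_{P,k}$. Since rank is a unitary invariant, the existence of $T \in \toeplitz(P)$ with $\operatorname{rank}\pi_k(T) = 1 \neq 0 = \operatorname{rank}\pi_{k'}(T)$ precludes any unitary equivalence between $\pi_k|_{\toeplitz(P)}$ and $\pi_{k'}|_{\toeplitz(P)}$.
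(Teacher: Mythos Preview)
Your proposal is correct and follows essentially the same approach as the paper: both arguments hinge on the element $p_k Q_0 \in \toeplitz(P)$ and the fact that the zeroth fiber $Arv(P)_{0,k}$ is the one-dimensional space $\mathbb{C}\, e^{(0)}_{kk}$, so that $\pi_k(p_k Q_0)$ is a rank-one projection while $\pi_{k'}(p_k Q_0) = 0$. The paper phrases this as an intertwining contradiction (assuming a unitary $U$ with $U\pi_k(p_j Q_0) = \pi_{k'}(p_j Q_0)U$ and applying both sides to $e^{(0)}_{kk}$), whereas your rank comparison is a slightly more direct repackaging of the same computation.
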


\begin{proof}
Suppose that $k,k' \in \Omega$ are such that $\pi_k |_{\toeplitz(P)}$ and $\pi_{k'}|_{\toeplitz(P)}$ are unitarily equivalent. Then there is a unitary $U: \FF_{P,k} \rightarrow \FF_{P,k'}$ such that $U \pi_k(T) = \pi_{k'}(T)U$ for all $T\in \toeplitz(P)$. For $j\in \Omega$, we have that $p_j Q_0 \in \toeplitz(P)$, so that
$U \pi_k(p_j Q_0) = \pi_{k'}(p_j Q_0)U$. Apply this operator to $e^{(0)}_{kk} \in Arv(P)_{0,k} \subset \FF_{P,k}$ and get
$$
\pi_{k'}(p_jQ_0)U(e^{(0)}_{kk}) = U \pi_{k}(p_j Q_0)(e^{(0)}_{kk}) = U(\delta_{jk} e^{(0)}_{kk})
$$
On the other hand $\pi_{k'}(Q_0)U(e^{(0)}_{kk})$ must have image in $Arv(P)_{0,k'}$ so that $\pi_{k'}(Q_0)U(e^{(0)}_{kk}) = c \cdot e^{(0)}_{k'k'}$ for some non-zero $c \in \cc$. But after applying $\pi_{k'}(p_j)$ we would obtain that 
$$
\pi_{k'}(p_jQ_0)U(e^{(0)}_{kk}) = c \cdot \pi_{k'}(p_j)(e^{(0)}_{k'k'}) = c \cdot \delta_{jk'} e^{(0)}_{k'k'}
$$
Thus, we see that if $k \neq k'$ then by taking $j=k$ we would obtain that $0 = c \cdot \delta_{jk'} e^{(0)}_{k'k'} = U(\delta_{jk} e^{(0)}_{kk}) \neq 0$ in contradiction. Hence, $\pi_k |_{\toeplitz(P)}$ and $\pi_{k'}|_{\toeplitz(P)}$ are not unitarily equivalent. 
\end{proof}

\begin{prop} \label{prop:toeplitz-char}
Let $P$ be a finite irreducible stochastic matrix on $\Omega$. Then $\JJ(\toeplitz(P)) = \JJ(\toeplitz^{c}(P))$ and is *-isomorphic to $\oplus_{k\in \Omega}\KK(\FF_{P,k})$. Thus, we have that $\toeplitz^{\infty}(P) \subseteq \toeplitz^c(P) = \toeplitz(P)$.
\end{prop}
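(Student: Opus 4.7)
The strategy is to use the faithful direct sum representation $\pi = \oplus_{k\in \Omega} \pi_k : \toeplitz^c(P) \to \oplus_{k\in\Omega} B(\FF_{P,k})$ to identify both $\JJ(\toeplitz(P))$ and $\JJ(\toeplitz^c(P))$ with $\oplus_{k\in\Omega} \KK(\FF_{P,k})$, and then to show that every $W$-generator of $\toeplitz^\infty(P) \subseteq \toeplitz^c(P)$ already lies in $\toeplitz(P) + \JJ(\toeplitz^c(P))$, which forces $\toeplitz^c(P) = \toeplitz(P)$.

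For the identification of ideals, since $\Omega$ is finite each slice $Arv(P)_{n,k}$ is finite-dimensional, so $\pi_k(Q_n)$ is a finite-rank projection. By Proposition~\ref{prop:alt-desc-for-cuntz-ideal}, $\JJ(\toeplitz^c(P))$ is the closed two-sided ideal generated in $\toeplitz^c(P)$ by $\{Q_n\}_{n\in\nn}$, so $\pi_k(\JJ(\toeplitz^c(P))) \subseteq \KK(\FF_{P,k})$. For the reverse inclusion, consider $a_k := p_k Q_0 \in \toeplitz(P)$. A direct computation in Notation~\ref{notation:state-invariant-fock} shows $\pi_l(a_k) = \delta_{lk}\, \pi_k(Q_0)$, with $\pi_k(Q_0)$ the rank-one projection onto $\cc e^{(0)}_{kk}$. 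By irreducibility of $\pi_k|_{\toeplitz(P)}$ (Proposition~\ref{prop:irreducible-cuntz}) together with Kadison's transitivity theorem, the closed two-sided ideal of $\pi_k(\toeplitz(P))$ generated by $\pi_k(Q_0)$ equals $\KK(\FF_{P,k})$. Noting that each $a_k$ lies in $\JJ(\toeplitz(P))$ and summing over $k \in \Omega$ yields $\pi(\JJ(\toeplitz(P))) \supseteq \oplus_{k\in \Omega} \KK(\FF_{P,k})$. Combined with the upper bound and the faithfulness of $\pi$, we deduce
$$
\JJ(\toeplitz(P)) = \JJ(\toeplitz^c(P)) \cong \oplus_{k\in\Omega} \KK(\FF_{P,k}).
$$

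It remains to prove $\toeplitz^c(P) = \toeplitz(P)$. The subspace $\toeplitz(P) + \JJ(\toeplitz^c(P))$ is a closed *-subalgebra of $\toeplitz^c(P)$ containing $\ell^\infty(\Omega)$ and all shifts, so it suffices to show each $W^{(n)}_{E_{i_0 j_0}}$ with $(i_0, j_0) \in E(P^n)$ lies in it. We compare it with $T^{(n)}_{E_{i_0 j_0}} := S^{(n)}_{(\sqrt{P^n})^\flat * E_{i_0 j_0}} \in \toeplitz(P)$: a direct computation gives that for $B \in Arv(P)_m$, the difference $(W^{(n)}_{E_{i_0 j_0}} - T^{(n)}_{E_{i_0 j_0}})(B)$ vanishes off row $i_0$, and its $(i_0, j)$-entry is $\bigl(1 - \sqrt{P^{(m)}_{j_0 j}}/\sqrt{P^{(m+n)}_{i_0 j}}\bigr)\, B_{j_0 j}$ when $(j_0, j) \in E(P^m)$ (and is automatically $0$ otherwise, as then $B_{j_0 j}=0$). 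Because $(i_0, j_0) \in E(P^n)$ forces $\sigma(j_0) - \sigma(i_0) \equiv n \pmod r$, the cyclic residue conditions for $P^{(m)}_{j_0 j}$ and $P^{(m+n)}_{i_0 j}$ align via Theorem~\ref{theorem:cyclic-graph-decomposition}, so on the relevant residue class of $m$ both probabilities are eventually positive and tend to $\nu_j r$ by Theorem~\ref{theorem:convergence-theorem-finite}. Since $\dim Arv(P)_m \leq |\Omega|^2$ is bounded, entrywise convergence upgrades to operator-norm convergence, giving $\|(W^{(n)}_{E_{i_0 j_0}} - T^{(n)}_{E_{i_0 j_0}}) Q_m\| \to 0$ as $m \to \infty$. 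Thus $W^{(n)}_{E_{i_0 j_0}} - T^{(n)}_{E_{i_0 j_0}} \in \JJ(\toeplitz^c(P))$, and by linearity every $W^{(n)}_A$ lies in $\toeplitz(P) + \JJ(\toeplitz^c(P))$. Consequently $\toeplitz^c(P) = \toeplitz(P) + \JJ(\toeplitz^c(P)) = \toeplitz(P)$, completing the proof. The delicate point is aligning the cyclic residue classes so that Theorem~\ref{theorem:convergence-theorem-finite} applies \emph{uniformly} in $j \in \Omega$, upgrading entrywise convergence to operator-norm convergence; finiteness of $\Omega$ is essential here.
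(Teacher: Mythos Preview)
Your proof is correct, but it takes a somewhat different route from the paper's, in two respects.

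For the identification $\JJ(\toeplitz(P)) = \JJ(\toeplitz^c(P)) \cong \oplus_{k\in\Omega}\KK(\FF_{P,k})$, the paper argues via the structure theory of C*-algebras of compact operators: since each $\pi_k$ restricted to either ideal is irreducible with image $\KK(\FF_{P,k})$, and since the $\pi_k$'s are pairwise inequivalent (Proposition~\ref{prop:equivalent-representations}), the identity representation of $\pi(\JJ(\toeplitz^c(P)))$ decomposes with multiplicity one, forcing the algebra to be the full direct sum. You instead exhibit the explicit elements $a_k = p_k Q_0$ with $\pi_l(a_k) = \delta_{lk}\,\pi_k(Q_0)$, which directly separates the summands without invoking Proposition~\ref{prop:equivalent-representations}. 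Your approach is more elementary and hands-on; the paper's is more structural but requires the extra ingredient of pairwise inequivalence.

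For the conclusion $\toeplitz^c(P) = \toeplitz(P)$, the paper simply cites \cite[Proposition~5.5]{dor-on-markiewicz}, which already contains the statement that $\toeplitz(P) + \JJ(\toeplitz^c(P)) = \toeplitz^c(P)$. You instead reprove this by the direct computation showing $W^{(n)}_{E_{i_0 j_0}} - T^{(n)}_{E_{i_0 j_0}} \in \JJ(\toeplitz^c(P))$ via the convergence theorem. This is essentially the content of that cited proposition, so your argument is self-contained at the cost of redoing work already available in the literature. Both routes are valid; yours has the advantage of not relying on the earlier paper at this step.
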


\begin{proof}
By Proposition \ref{prop:alt-desc-for-cuntz-ideal}, we have that $\JJ(\toeplitz^c(P))$ is the ideal generated by $\{Q_n \}_{n\in \nn}$ inside $\toeplitz^c(P)$, and since $\pi(Q_n)$ is a finite rank operator, we see by Proposition \ref{prop:irreducible-cuntz} that $\pi_k(\JJ(\toeplitz^c(P)))$ and $\pi_k(\JJ(\toeplitz(P)))$ are irreducible compact operator subalgebras of $B(\FF_{P,k})$ and hence by \cite[Theorem 1.3.4]{arv-book} they must both be equal to $\KK(\FF_{P,k})$. Write the identity representation $Id : \pi(\JJ(\toeplitz^c(P))) \rightarrow \oplus_{k\in \Omega} B(\FF_{P,k})$ as a direct sum of irreducible representations with multiplicity $Id = \bigoplus \ n(\zeta) \cdot \zeta$, where each $\zeta$ is a representative in the equivalence class of irreducible representation given by restriction to some $\FF_{P,k}$ for some $k$. Then by Proposition \ref{prop:equivalent-representations} we have that $n(\zeta) = 1$ for all $\zeta$ and that $Id |_{\pi(\JJ(\toeplitz(P)))}$ has the same decomposition into irreducible representations as the one above. Since $\pi = \bigoplus \pi_k$ is injective on $\JJ(\toeplitz^c(P))$, we have that $\pi(\JJ(\toeplitz^c(P))) = \oplus_{k \in \Omega} \KK(\FF_{P,k}) = \pi(\JJ(\toeplitz(P)))$, and by taking the inverse of the faithful *-representation $\pi$, we obtain $\JJ(\toeplitz(P)) = \JJ(\toeplitz^c(P))$.

Finally, by \cite[Proposition 5.5]{dor-on-markiewicz} we have that $\toeplitz(P) = \toeplitz(P) + \JJ(\toeplitz(P)) = \toeplitz(P) + \JJ(\toeplitz^c(P)) = \toeplitz^c(P)$ so that $\toeplitz^{\infty}(P) \subseteq \toeplitz^c(P) = \toeplitz(P)$.
\end{proof}

We next wish to parametrize all irreducible representations of $\toeplitz(P)$. Under the identification $\cuntz(P) \cong C(\mathbb{T}, M_d)$ and $\JJ(\toeplitz(P)) \cong \oplus_{k\in \Omega}\KK(\FF_{P,k})$ we have the following exact sequence
$$
0 \rightarrow \oplus_{k\in \Omega}\KK(\FF_{P,k}) \rightarrow \toeplitz(P) \rightarrow C(\mathbb{T}, M_d) \rightarrow 0
$$
If $\rho : \toeplitz(P) \rightarrow B(H)$ is a unital representation, by the discussion preceding \cite[Theorem 1.3.4]{arv-book} it decomposes uniquely into a central direct sum of representations $\rho = \rho_{\JJ} \oplus \rho_{\mathcal{O}}$, where $\rho_{\JJ}$ is the unique extension to $\toeplitz(P)$ of the restriction of $\rho$ to $\JJ(\toeplitz(P))$, and $\rho_{\mathcal{O}}$ annihilates $\JJ(\toeplitz(P))$. Hence, the spectrum of $\toeplitz(P)$ decomposes into a disjoint union of the spectrum of $\JJ(\toeplitz(P)) \cong \oplus_{k\in \Omega}\KK(\FF_{P,k})$ and the spectrum of $\cuntz(P) \cong C(\mathbb{T},M_d)$.

For $\lambda \in \mathbb{T}$, we define $ev_{\lambda} : C(\mathbb{T}, M_d) \rightarrow M_d$ given by $ev_{\lambda}([f_{ij}]) = [f_{ij}(\lambda)]$. Since $ev_{\lambda}$ has range $M_d$, we obtain that $ev_{\lambda} \circ q$ is an irreducible representation of $\toeplitz(P)$ where $q: \toeplitz(P) \rightarrow \cuntz(P)$ is the quotient map. Note that every $ev_{\lambda} \circ q$ is a $d$ dimensional representation.

\begin{cor} \label{cor:irreps-toeplitz}
Let $P$ be an irreducible stochastic matrix over $\Omega$ of size $d$. Then the spectrum of $\toeplitz(P)$ is parameterized by $d$ irreducible representations of infinite dimension, each unitarily equivalent to some $\pi_k$, and a torus $\torus$ of irreducible representations of dimension $d$ that annihilate $\JJ(\toeplitz(P))$, each unitarily equivalent to $ev_{\lambda} \circ q$ for some $\lambda \in \mathbb{T}$.
\end{cor}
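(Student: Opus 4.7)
The plan is to leverage the central direct sum decomposition mentioned in the paragraph preceding the statement: every unital $*$-representation $\rho$ of $\toeplitz(P)$ decomposes uniquely as $\rho = \rho_{\JJ} \oplus \rho_{\mathcal{O}}$, where $\rho_{\JJ}$ is the canonical extension to $\toeplitz(P)$ of $\rho|_{\JJ(\toeplitz(P))}$ (possibly degenerate) and $\rho_{\mathcal{O}}$ annihilates $\JJ(\toeplitz(P))$. If $\rho$ is irreducible then exactly one of these two summands is nonzero, so $\widehat{\toeplitz(P)}$ is the disjoint union of the spectrum of the ideal $\JJ(\toeplitz(P))$ (extended to $\toeplitz(P)$) and the spectrum of the quotient $\cuntz(P)$. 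It therefore suffices to identify each of these two spectra.

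For the ideal part, Proposition \ref{prop:toeplitz-char} gives $\JJ(\toeplitz(P)) \cong \bigoplus_{k\in\Omega} \KK(\FF_{P,k})$, a finite direct sum of $d$ elementary C*-algebras. Since each $\KK(\FF_{P,k})$ has (up to unitary equivalence) a unique irreducible representation, namely the identity representation, the spectrum of $\JJ(\toeplitz(P))$ consists of exactly $d$ equivalence classes of irreducible representations, each given by projection onto the $k$-th summand followed by the identity representation of $\KK(\FF_{P,k})$. By Proposition \ref{prop:irreducible-cuntz}, the restriction of $\pi_k$ to $\JJ(\toeplitz(P))$ is already irreducible and factors in this way for the $k$-th summand, and by Proposition \ref{prop:equivalent-representations} different choices of $k$ give inequivalent representations. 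Hence the unique extensions to $\toeplitz(P)$ are, up to unitary equivalence, precisely the $d$ representations $\pi_k$ for $k \in \Omega$.

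For the quotient part, Theorem \ref{theorem:main} gives $\cuntz(P) \cong C(\torus, M_d) \cong C(\torus) \otimes M_d$. The irreducible representations of this algebra are parameterized by $\torus$: the center is $C(\torus) \otimes 1$, so Schur's lemma forces any irreducible representation to send the center into scalars, meaning it factors through $ev_\lambda$ for some $\lambda \in \torus$; composing $ev_\lambda$ with the unique irreducible representation of $M_d$ (the identity) yields an irreducible $d$-dimensional representation, and distinct points of $\torus$ give inequivalent representations because they already disagree on the central subalgebra $C(\torus) \otimes 1$. Pulling back through $q\colon \toeplitz(P) \to \cuntz(P)$ yields the parameterization $\{ev_\lambda \circ q : \lambda \in \torus\}$.

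The argument is essentially an assembly of the results already proved in the section together with standard facts about spectra of elementary C*-algebras and of $C(X) \otimes M_d$; there is no new computation. The only point requiring attention is ensuring that, under the decomposition $\rho = \rho_{\JJ} \oplus \rho_{\mathcal{O}}$, irreducibility genuinely forces one summand to vanish, which follows from the fact that the two summands act on mutually orthogonal central projections in the enveloping von Neumann algebra, making the decomposition disjoint as per \cite[Theorem 1.3.4]{arv-book}.
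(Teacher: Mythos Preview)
Your proposal is correct and follows essentially the same approach as the paper: split according to whether an irreducible representation annihilates $\JJ(\toeplitz(P))$, then identify the spectrum of the ideal $\JJ(\toeplitz(P))\cong\bigoplus_k\KK(\FF_{P,k})$ and of the quotient $\cuntz(P)\cong C(\mathbb{T},M_d)$ separately. The only cosmetic difference is that the paper identifies the irreducibles of $C(\mathbb{T},M_d)$ via strong Morita equivalence with $C(\mathbb{T})$, whereas you use Schur's lemma on the center $C(\mathbb{T})\otimes 1$; both are standard and yield the same conclusion.
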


\begin{proof}
If $\rho$ is an irreducible representation of $\toeplitz(P)$ that does not annihilate $\JJ(\toeplitz(P))$, we have  by \cite[Theorem 1.3.4]{arv-book} that $\rho |_{\JJ(\toeplitz(P))}$ is also irreducible. We use $\pi^{-1}$ to obtain an irreducible representation $\rho \circ \pi^{-1}$ of $\pi(\JJ(\toeplitz(P)))$. Since $\pi(\JJ(\toeplitz(P)))$ is a C*-algebra of compact operators, by \cite[Theorem 1.4.4]{arv-book} every irreducible representation of it is unitarily equivalent to some restriction to some $\FF_{P,k}$. Pushing this back via $\pi$ we obtain that $\rho$ is unitarily equivalent to some $\pi_k$.

For the other part, if $\rho$ does annihilate $\JJ(\toeplitz(P))$, it induces an irreducible representation of $\cuntz(P) \cong C(\mathbb{T}, M_d)$ by taking the quotient by $\JJ(\toeplitz(P))$. Since the irreducible representations of $C(\mathbb{T})$ are just point evaluations, and since $C(\mathbb{T})$ is strongly Morita equivalent to $C(\mathbb{T}, M_d)$,  we see that $\rho$ must be unitarily equivalent to the composition $ev_{\lambda} \circ q$ of an evaluation $ev_{\lambda} : C(\mathbb{T}, M_d) \rightarrow M_d$ given by $ev_{\lambda}([f_{ij}]) = [f_{ij}(\lambda)]$ and the natural quotient map $q : \toeplitz(P) \rightarrow \cuntz(P)$.

Thus, the spectrum of $\toeplitz(P)$ is parametrized by $d$ irreducible representations of infinite dimension, and a torus $\mathbb{T}$ of irreducible representations of dimension $d$.
\end{proof}

\begin{lemma} \label{lemma:eventually-projection}
Let $P$ be an irreducible stochastic matrix over a finite set $\Omega$, and let $\epsilon > 0$. There exists $m \geq 1$ and $M > 0$ such that for every $(i,j)\in E(P)$ we have
$$
(1+\epsilon)p_j  \geq T_{E_{ij}}^{(1)*}T_{E_{ij}}^{(1)} - M\cdot Q_{[0,m]}
$$
\end{lemma}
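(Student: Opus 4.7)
The plan is to realize $\toeplitz(P)$ faithfully on the Fock space via the representation $\pi$ from Proposition~\ref{prop:irreducible-cuntz}, and to compute $T_{E_{ij}}^{(1)*}T_{E_{ij}}^{(1)}$ explicitly in the orthonormal basis $\{ e_{i'k}^{(m)}\}$ from Notation~\ref{notation:state-invariant-fock}. Since $T_{E_{ij}}^{(1)} = P_{ij}^{-1/2}\, S_{E_{ij}}^{(1)}$ by definition, and applying the subproduct formula $U_{1,m}(E_{ij}\otimes B) = (\sqrt{P^{m+1}})^{\flat} * [(\sqrt{P} * E_{ij}) \cdot (\sqrt{P^m} * B)]$ to the basis element $B = E_{i'k}$, a direct calculation yields
\[
T_{E_{ij}}^{(1)}(e_{jk}^{(m)}) = \sqrt{\tfrac{P_{jk}^{(m)}}{P_{ik}^{(m+1)}}}\, e_{ik}^{(m+1)}, \qquad T_{E_{ij}}^{(1)}(e_{i'k}^{(m)}) = 0 \text{ for } i' \neq j,
\]
whenever $(j,k)\in E(P^m)$. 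Consequently $T_{E_{ij}}^{(1)*}T_{E_{ij}}^{(1)}$ is diagonal in this basis, supported on the range of $p_j$, acting as multiplication by $P_{jk}^{(m)}/P_{ik}^{(m+1)}$ on $e_{jk}^{(m)}$ and by zero on all other basis vectors. In particular it commutes with both $p_j$ and with every $Q_{[0,n]}$.

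The second step is to control these eigenvalues for large $m$ using Theorem~\ref{theorem:convergence-theorem-finite}. Since $(i,j)\in E(P)$ gives $\sigma(j) \equiv \sigma(i) + 1 \pmod r$, a check with the cyclic decomposition shows that for any $k \in \Omega$ one has $P_{jk}^{(m)}>0$ if and only if $P_{ik}^{(m+1)}>0$, and in that residue class both quantities converge to $\nu_k r > 0$ as $m\to\infty$. Hence the ratio $P_{jk}^{(m)}/P_{ik}^{(m+1)}$ tends to $1$ along its support and is zero off of it. Because $E(P)$ and $\Omega$ are finite, there are only finitely many such sequences to consider, so we can pick an integer $m\geq 1$ uniformly large enough that $P_{jk}^{(m')}/P_{ik}^{(m'+1)} \leq 1+\epsilon$ for every $m' \geq m$, every $(i,j)\in E(P)$ and every $k \in \Omega$.

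Combining the two steps completes the argument. With $m$ chosen as above, the spectral description yields $T_{E_{ij}}^{(1)*}T_{E_{ij}}^{(1)}\, Q_{[m+1,\infty)} \leq (1+\epsilon)\, p_j\, Q_{[m+1,\infty)} \leq (1+\epsilon) p_j$. On the complement, the operator $T_{E_{ij}}^{(1)*}T_{E_{ij}}^{(1)}\, Q_{[0,m]}$ is bounded by its norm on this finite-dimensional subspace, and setting $M$ to be the maximum of these norms over the finitely many edges $(i,j)\in E(P)$ gives the uniform bound $T_{E_{ij)}}^{(1)*}T_{E_{ij}}^{(1)}\, Q_{[0,m]} \leq M\, Q_{[0,m]}$. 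Adding the two mutually orthogonal pieces produces $T_{E_{ij}}^{(1)*}T_{E_{ij}}^{(1)} \leq (1+\epsilon) p_j + M\, Q_{[0,m]}$, which is the stated inequality. The only place requiring care is justifying the uniform convergence in the second step, but this follows cleanly from Theorem~\ref{theorem:convergence-theorem-finite} together with the compatibility of the cyclic decomposition with edges in $E(P)$ that was already used in Section~\ref{Sec:Cuntz-Pimsner}.
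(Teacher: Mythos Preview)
Your proof is correct and follows essentially the same line as the paper's: both compute $T_{E_{ij}}^{(1)*}T_{E_{ij}}^{(1)}$ as a diagonal operator with eigenvalues $P^{(m)}_{jk}/P^{(m+1)}_{ik}$, invoke Theorem~\ref{theorem:convergence-theorem-finite} to bound these by $1+\epsilon$ for large $m$, and absorb the finitely many remaining fibers into $M\cdot Q_{[0,m]}$. The only cosmetic difference is that you work in the Hilbert space picture via $\pi$ and the basis $\{e^{(m)}_{i'k}\}$, whereas the paper computes directly on matrix units $E_{k\ell}\in Arv(P)_m$; your version is in fact slightly more careful about uniformity of $M$ over edges. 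One minor slip: your claim that $P^{(m)}_{jk}>0$ \emph{iff} $P^{(m+1)}_{ik}>0$ only holds for $m$ large (for small $m$ only the forward implication is guaranteed), but since you only use it asymptotically this does not affect the argument.
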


\begin{proof}
For $E_{k\ell} \in Arv(P)_m$ and $m\geq 1$, by definition of $T_{E_{ij}}^{(1)}$, we see that 
$$
T_{E_{ij}}^{(1)}(E_{k\ell}) = \delta_{j,k} \sqrt{\frac{P^{(m)}_{k\ell}}{P^{(m+1)}_{i\ell}}} E_{i\ell} \ \ \text{and} \ \ 
T_{E_{ij}}^{(1)*}(E_{k\ell}) = \delta_{i,k} \sqrt{\frac{P^{(m)}_{j \ell}}{P^{(m+1)}_{k\ell}}}E_{j\ell}
$$
So that
$$
T_{E_{ij}}^{(1)*}T_{E_{ij}}^{(1)}(E_{k\ell}) = \delta_{j,k} \sqrt{\frac{P^{(m)}_{k\ell}}{P^{(m+1)}_{i\ell}}} T_{E_{ij}}^{(1)*}(E_{i\ell}) = \frac{P^{(m)}_{j\ell}}{P^{(m+1)}_{i\ell}} p_j(E_{k\ell})
$$
By Theorem \ref{theorem:convergence-theorem-finite}, there exists $m$ such that $\frac{P^{(m)}_{j\ell}}{P^{(m+1)}_{i\ell}} \leq 1+ \epsilon$ for all $(i,j) \in E(P)$ and $\ell \in \Omega$ such that $(j,\ell) \in E(P^m)$. Hence, if we take $M = \|Q_{[0,m]} T_{E_{ij}}^{(1)*}T_{E_{ij}}^{(1)} \|$ it follows that
$(1+\epsilon)p_j  \geq T_{E_{ij}}^{(1)*}T_{E_{ij}}^{(1)} - M\cdot Q_{[0,m]}$ as required.
\end{proof}

We next show that representations annihilating $\JJ(P)$ have u.e.p. when restricted to $\tensor(P)$.

\begin{prop} \label{prop:annihi-auto-uep}
Let $P$ be a finite irreducible stochastic matrix over $\Omega$, and let $\rho : \toeplitz(P) \rightarrow B(H)$ be a *-representation such that $\rho(\JJ(P)) = \{0\}$. Then $\rho|_{\tensor(P)}$ has u.e.p.
\end{prop}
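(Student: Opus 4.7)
The plan is to show that any unital completely positive extension $\tilde\rho\colon \toeplitz(P) \to B(H)$ of $\rho|_{\tensor(P)}$ must coincide with $\rho$, which is the appropriate reformulation of the unique extension property. By Choi's characterization of the multiplicative domain, it suffices to show that each generator $S_{E_{ij}}^{(1)}$, for $(i,j) \in E(P)$, satisfies both Kadison-Schwarz equalities $\tilde\rho(S_{E_{ij}}^{(1)}S_{E_{ij}}^{(1)*}) = \tilde\rho(S_{E_{ij}}^{(1)})\tilde\rho(S_{E_{ij}}^{(1)})^*$ and $\tilde\rho(S_{E_{ij}}^{(1)*}S_{E_{ij}}^{(1)}) = \tilde\rho(S_{E_{ij}}^{(1)})^*\tilde\rho(S_{E_{ij}}^{(1)})$: the resulting multiplicative domain, being a C*-subalgebra of $\toeplitz(P)$ containing the generating set $\{p_j\} \cup \{S_{E_{ij}}^{(1)}\}$, is then all of $\toeplitz(P)$, forcing $\tilde\rho$ to be a $*$-representation and hence to agree with $\rho$ on generators and everywhere.

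The engine is an \emph{exact} Cuntz-type identity in $\toeplitz(P)$:
\[
I \;=\; Q_0 \;+\; \sum_{(k,l)\in E(P)} S_{E_{kl}}^{(1)}\, S_{E_{kl}}^{(1)*}.
\]
This follows from a direct calculation: acting on $E_{ab}\in Arv(P)_n$ with $n\geq 1$, each summand contributes $\delta_{k,a}\, P_{kl}\, P^{(n-1)}_{lb}/P^{(n)}_{ab}$, and summing over $l$ with $(a,l)\in E(P)$ yields $1$ by the Chapman--Kolmogorov identity $P^{(n)}_{ab} = \sum_l P_{al}\, P^{(n-1)}_{lb}$. Applying $\tilde\rho$, using $\tilde\rho|_{\tensor(P)} = \rho|_{\tensor(P)}$, and invoking Kadison-Schwarz on each $S_{E_{kl}}^{(1)}\in \tensor(P)$ gives
\[
I = \tilde\rho(Q_0) + \sum \tilde\rho(S_{E_{kl}}^{(1)}\, S_{E_{kl}}^{(1)*}) \;\geq\; \tilde\rho(Q_0) + \sum \rho(S_{E_{kl}}^{(1)}\, S_{E_{kl}}^{(1)*}) = \tilde\rho(Q_0) + I,
\]
where the last step uses $\rho(Q_0)=0$. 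This forces $\tilde\rho(Q_0)=0$ and, by termwise comparison, the first Kadison-Schwarz equality $\tilde\rho(S_{E_{kl}}^{(1)}\, S_{E_{kl}}^{(1)*}) = \tilde\rho(S_{E_{kl}}^{(1)})\tilde\rho(S_{E_{kl}}^{(1)})^*$ for each $(k,l)\in E(P)$.

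From $\tilde\rho(Q_0)=0$ and the Stinespring dilation of $\tilde\rho$ one obtains $\tilde\rho(aQ_0 b)=0$ for all $a,b\in \toeplitz(P)$. The analogous iterated identity $Q_n = \sum_{(k,l)\in E(P^n)} S_{E_{kl}}^{(n)} Q_0 S_{E_{kl}}^{(n)*}$, proved by the same Chapman--Kolmogorov calculation, then yields $\tilde\rho(Q_n)=0$ and hence $\tilde\rho(Q_{[0,m]})=0$ for every $m\geq 0$. Lemma \ref{lemma:eventually-projection} gives $T_{E_{ij}}^{(1)*} T_{E_{ij}}^{(1)} \leq (1+\epsilon) p_j + M\, Q_{[0,m]}$; applying $\tilde\rho$ and using the vanishing just established yields $\tilde\rho(T_{ij}^*T_{ij}) \leq (1+\epsilon)\rho(p_j)$ for every $\epsilon>0$. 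Since $T_{ij}^*T_{ij} - p_j \in \JJ(P)$ and $\rho$ factors through $\cuntz(P)$, we have $\rho(p_j) = \rho(T_{ij})^*\rho(T_{ij}) = \tilde\rho(T_{ij})^*\tilde\rho(T_{ij})$, so Kadison-Schwarz closes to equality $\tilde\rho(T_{ij}^*T_{ij}) = \tilde\rho(T_{ij})^*\tilde\rho(T_{ij})$. Since $S_{E_{ij}}^{(1)}$ and $T_{E_{ij}}^{(1)}$ differ by a positive scalar, combining this with the first Kadison-Schwarz equality established earlier places $S_{E_{ij}}^{(1)}$ in the multiplicative domain of $\tilde\rho$, completing the argument.

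The critical technical step is establishing the \emph{exact} Cuntz-type identity $I = Q_0 + \sum_{(k,l)\in E(P)} S_{E_{kl}}^{(1)} S_{E_{kl}}^{(1)*}$: exactness (rather than a mere inequality) is precisely what enables Kadison-Schwarz to tighten and pin down $\tilde\rho(Q_0)$ as zero, from which the vanishing on $\JJ(P)$ propagates. Once $\tilde\rho$ annihilates every $Q_{[0,m]}$, Lemma \ref{lemma:eventually-projection} can be applied without impediment to close the remaining Kadison-Schwarz inequality for $T_{ij}^*T_{ij}$.
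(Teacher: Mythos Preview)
Your overall strategy is sound and closely parallels the paper's proof: both arguments hinge on the same two ingredients, namely the exact Cuntz-type identity $I = Q_{[0,m-1]} + \sum_{(i,j)\in E(P^m)} S^{(m)}_{E_{ij}} S^{(m)*}_{E_{ij}}$ (from \cite[Proposition 5.5]{dor-on-markiewicz}) and Lemma~\ref{lemma:eventually-projection}. The paper packages this via maximal dilations and block-matrix compressions, whereas you use Choi's multiplicative-domain criterion; these are equivalent viewpoints and neither gains or loses anything substantial.

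There is, however, one genuine gap. The assertion that ``from $\tilde\rho(Q_0)=0$ and the Stinespring dilation of $\tilde\rho$ one obtains $\tilde\rho(aQ_0 b)=0$ for all $a,b$'' is false for general UCP maps: take $\tilde\rho:M_2\to\cc$, $\tilde\rho(A)=A_{11}$, so $\tilde\rho(E_{22})=0$ but $\tilde\rho(E_{12}E_{22}E_{21})=\tilde\rho(E_{11})=1$. In the Stinespring picture $\tilde\rho=V^*\pi(\cdot)V$, the vanishing $\tilde\rho(Q_0)=0$ gives $\pi(Q_0)V=0$, but $\pi(Q_0)\pi(b)V$ need not vanish. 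Thus your route to $\tilde\rho(Q_n)=0$ via $Q_n=\sum S^{(n)}_{E_{kl}}Q_0 S^{(n)*}_{E_{kl}}$ is not justified as written.

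The fix is immediate and keeps your argument intact: simply rerun your first-paragraph Kadison--Schwarz computation with the higher-order identity
\[
I \;=\; Q_{[0,m-1]} \;+\; \sum_{(i,j)\in E(P^m)} S^{(m)}_{E_{ij}}\, S^{(m)*}_{E_{ij}},
\]
which yields $\tilde\rho(Q_{[0,m-1]})\le 0$ and hence $\tilde\rho(Q_{[0,m-1]})=0$ directly, without any appeal to $\tilde\rho(aQ_0b)=0$. (Alternatively, since you have already established the Choi equality $\tilde\rho(SS^*)=\tilde\rho(S)\tilde\rho(S)^*$ for each $S=S^{(1)}_{E_{ij}}$, you know $\tilde\rho(Sb)=\tilde\rho(S)\tilde\rho(b)$ for all $b$; iterating along a path gives $\tilde\rho(S^{(n)}Q_0S^{(n)*})=\rho(S^{(n)})\tilde\rho(Q_0)\rho(S^{(n)})^*=0$, which also closes the gap.) With either repair the remainder of your argument goes through.
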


\begin{proof}
Let $\widetilde{\rho} : \tensor(P) \rightarrow B(K)$ be a maximal dilation of $\rho |_{\tensor(P)}$ such that $H$ is a subspace of $K$, and let $\psi : \toeplitz(P) \rightarrow B(K)$ be its (unique) extension to a *-representation. Denote
$$
\psi(p_i) = \begin{bmatrix}
\rho(p_i) & X_i \\
Y_i & Z_i
\end{bmatrix} \ \ \text{and} \ \ 
\psi(T_{E_{ij}}^{(1)}) = \begin{bmatrix}
\rho(T_{E_{ij}}^{(1)}) & X_{ij} \\
Y_{ij} & Z_{ij}
\end{bmatrix}
$$
First note that since $p_i$ is a self-adjoint projection, we get that
$$
\begin{bmatrix}
\rho(p_i) & X_i \\
Y_i & Z_i
\end{bmatrix} = \psi(p_i) = \psi(p_i)\psi(p_i)^* = \begin{bmatrix}
\rho(p_i) \rho(p_i)^* + X_iX_i^* & * \\
* & *
\end{bmatrix}
$$
So that by taking the $(1,1)$ compression, we obtain that $X_iX_i^* = 0$, so that $X_i = 0$. Now, since $\psi(p_i)$ is self-adjoint, we see that we must also have that $Y_i = 0$.

Next, for $(i,j)\in E(P^m)$, Suppose 
$$
\psi(S^{(m)}_{E_{ij}}) =\begin{bmatrix}
\rho(S^{(m)}_{E_{ij}}) & X(m)_{ij} \\
Y(m)_{ij} & Z(m)_{ij}
\end{bmatrix}
$$
Observe that for all $m\geq 1$, by the proof of \cite[Proposition 5.5]{dor-on-markiewicz} we have that
$$
0 \leq Q_{[0,m-1]} = Id - \sum_{(i,j)\in E(P^{m})}S^{(m)}_{E_{ij}}S^{(m)*}_{E_{ij}}
$$
Hence, by applying $\psi$ to this equation, we obtain that
$$
0 \leq \psi(Q_{[0,m-1]}) = Id - \sum_{(i,j)\in E(P^{m})}\psi(S^{(m)}_{E_{ij}})\psi(S^{(m)}_{E_{ij}})^*
$$
Then by compressing to the $(1,1)$ corner we get
$$
0 \leq Id - \sum_{(i,j)\in E(P^{m})} \big[\rho(S^{(m)}_{E_{ij}}) \rho(S^{(m)}_{E_{ij}})^* + X(m)_{ij}X(m)_{ij}^* \big] = - \sum_{(i,j)\in E(P^{m})} X(m)_{ij}X(m)_{ij}^*
$$
where the last equality follows due to the fact that $\rho$ annihilates $\JJ$.
Hence we must have that $X(m)_{ij} = 0$ for all $(i,j)\in E(P^m)$, so that the $(1,1)$ compression of $\psi(Q_{[0,m]})$ is $0$, and if we specify $m=1$, and note that $S^{(1)}_{E_{ij}} = \sqrt{P_{ij}}\cdot T^{(1)}_{E_{ij}}$, the above also yields that $X_{ij} = 0$ for all $(i,j)\in E(P)$.

Next, let $\epsilon > 0$. By Lemma \ref{lemma:eventually-projection} there exists $m\geq 1$ and $M > 0 $ such that for all $(i,j)\in E(P)$ we have that 
$$
(1+\epsilon)p_j  \geq T_{E_{ij}}^{(1)*}T_{E_{ij}}^{(1)} - M\cdot Q_{[0,m]}
$$
Hence, 
$$
(1+\epsilon)\psi(p_j) \geq \psi(T_{E_{ij}}^{(1)})^*\psi(T_{E_{ij}}^{(1)}) - M\cdot \psi(Q_{[0,m]})
$$
By compressing to the $(1,1)$ corner, we obtain that
$$
(1+\epsilon)\rho(p_j) \geq \rho(T_{E_{ij}}^{(1)})^*\rho(T_{E_{ij}}^{(1)}) + Y_{ij}^*Y_{ij}
$$
but $\rho(T_{E_{ij}}^{(1)})^*\rho(T_{E_{ij}}^{(1)}) = \rho(W^{(1)*}_{E_{ij}}W^{(1)}_{E_{ij}}) = \rho(p_j)$, so for every $\epsilon > 0$ we have that $\epsilon \cdot \rho(p_j) \geq Y_{ij}^*Y_{ij}$. Hence we have that $Y_{ij} = 0$ for all $(i,j)\in E(P)$.

Since $\toeplitz(P)$ is generated by $\{p_i\}_{i\in \Omega}$ and $\{T_{E_{ij}}^{(1)}\}_{(i,j)\in E(P)}$, we must have that $\psi$ has $\rho$ as a direct summand, so that $\widetilde{\rho}$ is a trivial dilation of $\rho |_{\tensor(P)}$, and hence $\rho |_{\tensor(P)}$ is maximal, and must then have the unique extension property.
\end{proof}

We next define a notion that will help us detect when an irreducible $\pi_k$ is not a boundary representation for $\tensor(P)$.

\begin{defi}
Let $P$ be a finite irreducible stochastic matrix over $\Omega$. A state $k\in \Omega$ is called \emph{exclusive} if whenever for $i\in \Omega$ and $n \in \nn$ we have $P^{(n)}_{ik} > 0$, then $P^{(n)}_{ik} = 1$. We denote by $\Omega_e$ the set of all exclusive states in $\Omega$.
\end{defi}

One should think of exclusive states as those states $k$ such that for any $n$ for which $i$ leads to $k$ in $n$ steps, it cannot lead anywhere else in $n$ steps.

\begin{lemma} \label{lemma:non-exclusive-char}
Let $P$ be a finite irreducible $r$-periodic stochastic matrix over $\Omega$, and $\Omega_0,..., \Omega_{r-1}$ be a cyclic decomposition for $P$. Suppose that $k \in \Omega_0$ is a state.

\begin{enumerate}
\item
$|\Omega_0| > 1$ if and only if $k$ is non-exclusive. In this case, any state in $\Omega_0$ is non-exclusive and there is an $n_0$ such that for any $n \geq n_0$ and $i,j\in \Omega_0$ we have $0 < P^{(rn)}_{ij} < 1$. 

\item
Assume $k$ is non-exclusive and $k\neq s \in \Omega$ is some different state. If there is $k \neq k' \in \Omega_0$ such that $P^{(m)}_{k's}> 0$ whenever $P^{(m)}_{ks} > 0$ for all $m\in \nn$, then there exists $n\in \nn$ such that $0 < P^{(rn)}_{kk} < 1$ and for all $m\in \nn$ with $(k,s) \in E(P^m)$ we have $P^{(rn)}_{kk}P^{(m)}_{ks} < P^{(rn+m)}_{ks}$.

\end{enumerate}
\end{lemma}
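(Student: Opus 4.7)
For part (1), my approach rests on the convergence Theorem~\ref{theorem:convergence-theorem-finite} and the cyclic block structure of Theorem~\ref{theorem:cyclic-graph-decomposition}. Suppose first that $|\Omega_0| > 1$. For $i, j \in \Omega_0$ the residue $\ell$ in Theorem~\ref{theorem:convergence-theorem-finite} is $0$, so $\lim_n P^{(rn)}_{ij} = \nu_j r$. The stationary vector $\nu$ is strictly positive by irreducibility; moreover, since Theorem~\ref{theorem:cyclic-graph-decomposition} implies that $P^r$ restricts to a stochastic matrix on $\Omega_0$, we get $\sum_{j\in \Omega_0} \nu_j r = 1$. Because $|\Omega_0| > 1$, each $\nu_j r$ is strictly less than $1$, so there exists $n_0$ with $0 < P^{(rn)}_{ij} < 1$ for all $n \geq n_0$ and all $i, j \in \Omega_0$. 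This already shows that every state in $\Omega_0$ is non-exclusive. Conversely, if $k$ is non-exclusive there exist $i \in \Omega$ and $n \in \nn$ with $0 < P^{(n)}_{ik} < 1$, and since the $i$-th row of $P^n$ sums to $1$, some $j \neq k$ satisfies $P^{(n)}_{ij} > 0$. From $P^{(n)}_{ik} > 0$ and $k \in \Omega_0$, the cyclic decomposition forces $i$ to lie in the cyclic class of index $\equiv -n \pmod{r}$; then $P^{(n)}_{ij} > 0$ places $j \in \Omega_0$. Hence $|\Omega_0| \geq 2$.

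For part (2), my plan is a straightforward Chapman--Kolmogorov splitting. By part (1), I fix $n$ large enough so that $0 < P^{(rn)}_{kk} < 1$ and $P^{(rn)}_{kk'} > 0$ simultaneously. Then for any $m \in \nn$ with $P^{(m)}_{ks} > 0$, the hypothesis on $k'$ yields $P^{(m)}_{k's} > 0$, and
$$
P^{(rn+m)}_{ks} = \sum_{i \in \Omega} P^{(rn)}_{ki} P^{(m)}_{is} \geq P^{(rn)}_{kk} P^{(m)}_{ks} + P^{(rn)}_{kk'} P^{(m)}_{k's} > P^{(rn)}_{kk} P^{(m)}_{ks},
$$
which is the required strict inequality.

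The only step requiring mild care is verifying $\sum_{j \in \Omega_0} \nu_j r = 1$, which is what upgrades the limits $\lim_n P^{(rn)}_{ij} = \nu_j r$ to strict upper bounds below $1$ when $|\Omega_0| > 1$. This follows from the block structure in Theorem~\ref{theorem:cyclic-graph-decomposition}: $P^r$ preserves each $\Omega_\ell$ and acts stochastically there, so the restriction of $r\nu$ to $\Omega_\ell$ is a probability vector on $\Omega_\ell$. Everything else is routine.
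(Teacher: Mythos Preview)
Your proof is correct. Part~(2) is essentially identical to the paper's argument. In part~(1) the two proofs diverge mildly in both directions, and it is worth recording the differences.

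For the forward direction (and the additional claim), the paper argues more elementarily: item~(2) of Theorem~\ref{theorem:cyclic-graph-decomposition} gives eventual positivity of all $P^{(rn)}_{ij}$ for $i,j\in\Omega_0$, and then the row-sum constraint forces each such entry to be strictly below $1$ once $|\Omega_0|>1$. You instead invoke the convergence Theorem~\ref{theorem:convergence-theorem-finite} to identify the limits $\nu_j r$ and check they lie strictly in $(0,1)$; this is a slightly heavier tool but yields the same conclusion cleanly, and your verification that $\sum_{j\in\Omega_0}\nu_j r = 1$ via the block-stochastic structure of $P^r$ is exactly what is needed.

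For the converse, your argument is actually more direct than the paper's. The paper first manufactures an index $m_0+n_0$ with $0 < P^{(m_0+n_0)}_{kk} < 1$ (by composing a path $k\to k'$ with the witness $k'\to k$), and only then uses the row-sum to locate a second state in $\Omega_0$. You skip this detour: from $0 < P^{(n)}_{ik} < 1$ you immediately find $j\neq k$ with $P^{(n)}_{ij}>0$, and a cyclic-class computation pins $j$ in $\Omega_0$. Both routes are short; yours avoids an unnecessary intermediate step.
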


\begin{proof}
(1): Suppose $| \Omega_0| > 1$ and let $k \in \Omega_0$. By item (2) of Theorem \ref{theorem:cyclic-graph-decomposition} there is $n_0$ such that for all $n\geq n_0$ we would have that $P^{(rn)}_{ij} > 0$ for all $i,j\in \Omega_0$ and $n \geq n_0$. Thus, for some $j\in \Omega_0$ we have that $P^{(nr)}_{jj}, P^{(nr)}_{jk} > 0$, and since the $j$-th row sums up to $1$ we get that $0 < P^{(nr)}_{jk} <1$, and we conclude that $k$ is non-exclusive.

For the converse, suppose $k$ is non-exclusive. We show that $|\Omega_0| > 1$. Let $k' \in \Omega$ and $n_0$ be so that $0 < P^{(n_0)}_{k'k} < 1$, and let $m_0$ be large enough so that $P^{(m_0)}_{kk'} > 0$. Then
$$
P^{(m_0 + n_0)}_{kk} = \sum_{j\in \Omega} P^{(m_0)}_{kj}P^{(n_0)}_{jk} < \sum_{j\in \Omega} P^{(m_0)}_{kj} = 1
$$
On the other hand,
$$
P^{(m_0 + n_0)}_{kk} = \sum_{j\in \Omega} P^{(m_0)}_{kj}P^{(n_0)}_{jk} \geq P^{(m_0)}_{kk'}P^{(n_0)}_{k'k} > 0
$$
So we see that $0 < P^{(m_0 + n_0)}_{kk} < 1$. Since the $k$-th row sums up to $1$, there must be an $i\in \Omega$ differnt from $k$ such that $P^{(m_0 + n_0)}_{ki} > 0$ and by definition of the cyclic decomposition we have that $i\in \Omega_0$. This shows that $|\Omega_0| > 1$. 

Next, by item (2) of Theorem \ref{theorem:cyclic-graph-decomposition} we may find $n_0$ large enough so that for any $n \geq n_0$ we would have $P^{(rn)}_{ij} > 0$ for all $i,j \in \Omega_0$. As $| \Omega_0 | > 1$, and all rows sum up to $1$, we must also have that $ P^{(rn)}_{ij} < 1$ for all $i,j \in \Omega_0$. Hence, we see that all states in $\Omega_0$ are non-exclusive.

(2): By item (1) we can find $n_0$ so that $0 < P^{(rn)}_{ij} < 1$ for all $i,j \in \Omega_0$ and $n \geq n_0$. Now fix $m\in \nn$ with $P^{(m)}_{ks} > 0$, so that by assumption $P^{(m)}_{k's} > 0$. Then
$$
P^{(rn+m)}_{ks} = \sum_{i \in \Omega}P^{(rn)}_{ki}P^{(m)}_{is} \geq P^{(rn)}_{kk'}P^{(m)}_{k's} + P^{(rn)}_{kk}P^{(m)}_{ks} > P^{(rn)}_{kk}P^{(m)}_{ks}
$$

\end{proof}

\begin{prop}\label{prop:boundary-rep-exculsive}
Let $P$ be a finite $r$-periodic irreducible matrix over $\Omega$ and $\Omega_0,...,\Omega_{r-1}$ a cyclic decomposition for $P$. Let $k\in \Omega$. 
\begin{enumerate}
\item
If $k \in \Omega_0$ is non-exclusive and for any other non-exclusive $s \neq k$ there is some $k \neq k' \in \Omega_0$ such that $P^{(m)}_{k's} > 0$ whenever $P^{(m)}_{ks} > 0$, then $\pi_k$ is a boundary representation.
\item
If $k$ is exclusive then $\pi_k$ is not a boundary representation.

\end{enumerate}
\end{prop}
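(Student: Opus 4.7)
The statement splits into two parts with opposite conclusions about maximality of $\pi_k|_{\tensor(P)}$, and I would attack them by different means.

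For part (2), the plan is to construct an explicit non-trivial *-dilation of $\pi_k|_{\tensor(P)}$, witnessing the failure of the unique extension property. When $k$ is exclusive, Lemma~\ref{lemma:non-exclusive-char}(1) yields $\Omega_{\sigma(k)} = \{k\}$, so the transition probabilities $P^{(n)}_{\cdot k}$ take values in $\{0,1\}$. A direct computation using the formulas from the proof of Lemma~\ref{lemma:eventually-projection} shows this forces each $\pi_k(T^{(1)}_{E_{ij}})$ to be a partial isometry with initial projection $\pi_k(p_j)$, with the Cuntz-type defect concentrated in a finite-rank piece near the vacuum $e^{(0)}_{kk}$. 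I would then dilate these partial isometries to satisfy full Cuntz-Pimsner-type relations on a bilaterally extended Hilbert space $\widetilde{\FF}_{P,k} \supsetneq \FF_{P,k}$, obtaining a *-representation $\widetilde{\pi}$ of $\toeplitz(P)$ which factors through $\cuntz(P) \cong C(\torus, M_d)$. The embedding $\FF_{P,k} \hookrightarrow \widetilde{\FF}_{P,k}$ would realize $\pi_k|_{\tensor(P)}$ as a compression of $\widetilde{\pi}|_{\tensor(P)}$ to a semi-invariant but non-reducing subspace, providing the desired non-trivial dilation.

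For part (1), I would take an arbitrary *-representation $\psi : \toeplitz(P) \to B(K)$ extending $\pi_k|_{\tensor(P)}$, assume $\FF_{P,k} \subseteq K$ via the embedding, and write $\psi$ in block form with respect to $K = \FF_{P,k} \oplus \FF_{P,k}^\perp$. The off-diagonal blocks of $\psi(p_i)$ vanish by self-adjointness of projections, just as in the proof of Proposition~\ref{prop:annihi-auto-uep}. The crux is to show that the off-diagonal blocks of $\psi(T^{(1)}_{E_{ij}})$ also vanish. For this I would use Lemma~\ref{lemma:non-exclusive-char}(2), which encodes both hypotheses of part (1): it furnishes $n$ with $0 < P^{(rn)}_{kk} < 1$ and the strict inequality $P^{(rn)}_{kk} P^{(m)}_{ks} < P^{(rn+m)}_{ks}$ for every $m$ with $(k,s) \in E(P^m)$. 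Translating this strict numerical inequality into a strict operator inequality between appropriate products of $T^{(rn)}_{E_{kk}}$ and $T^{(m)}_{E_{ks}}$ on the one hand and $T^{(rn+m)}_{E_{ks}}$ on the other, applying $\psi$ and compressing to the $(1,1)$-corner, I would aim to squeeze the off-diagonal blocks to zero using the strict gap. This would establish that $\FF_{P,k}$ is reducing for $\psi(\toeplitz(P))$, so $\psi = \pi_k \oplus \psi'$, proving maximality.

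The main obstacle lies in part (1). In Proposition~\ref{prop:annihi-auto-uep}, the hypothesis $\rho(\JJ(P)) = 0$ cleanly kills the compact-correction term coming from Lemma~\ref{lemma:eventually-projection}; here $\pi_k$ does not annihilate $\JJ(P)$, so the correction survives and one must instead exploit the strict sub-multiplicativity of Lemma~\ref{lemma:non-exclusive-char}(2). Getting this to simultaneously pin down all off-diagonal blocks of the generators $\psi(T^{(1)}_{E_{ij}})$ requires a subtle interplay between $T^{(rn)}_{E_{kk}}$ (providing the strict gap) and the generator whose block one is trying to kill. The dilation construction in part (2), by contrast, should be more routine once the partial-isometric structure of $\pi_k(T^{(1)}_{E_{ij}})$ has been secured via exclusivity.
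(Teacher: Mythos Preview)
Your plan diverges substantially from the paper's proof, which handles \emph{both} parts via Arveson's strongly peaking criterion \cite[Theorem~7.2]{arveson-non-commutative-choquet-II} rather than by direct dilation analysis.

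For part (1) the paper chooses a single element $T = T^{(n)}_{E_{kk}} \in \tensor(P)$ and shows $\pi_k$ strongly peaks there: $\|\pi_k(T)\| \geq 1/P^{(n)}_{kk} > 1 = \|(ev_\lambda \circ q)(T)\|$, and a norm computation on each $\FF_{P,s}$ together with Lemma~\ref{lemma:non-exclusive-char}(2) gives $\|\pi_k(T)\| > \|\pi_s(T)\|$ for $s\neq k$.  So the lemma you correctly identified as central is used, but only to compare \emph{norms} of one fixed element across the irreducibles --- a far lighter task than forcing off-diagonal blocks of all generators to vanish.  Your block-killing strategy faces a real obstruction that you yourself flag: the mechanism in Proposition~\ref{prop:annihi-auto-uep} for annihilating $X(m)_{ij}$ relies on $\rho(Q_{[0,m-1]})=0$, and here $\pi_k(Q_{[0,m-1]})$ is a non-zero finite-rank projection, so compressing only yields $\pi_k(Q_{[0,m-1]}) \geq \sum X(m)_{ij}X(m)_{ij}^*$.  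The strict inequality $P^{(rn)}_{kk}P^{(m)}_{ks} < P^{(rn+m)}_{ks}$ concerns the operators $T^{(rn)}_{E_{kk}}$ and $T^{(m)}_{E_{ks}}$, not arbitrary generators $T^{(1)}_{E_{ij}}$, and I do not see how to propagate that gap to all off-diagonal blocks simultaneously.  This is not obviously fixable, and the peaking route sidesteps it entirely.

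For part (2) the paper again uses peaking, but in the negative direction: since $k$ exclusive forces $\pi_k(T^{(n)}_A) = \pi_k(W^{(n)}_A)$, one can show $\|\pi_k^{(s)}(T)\| \leq \|q^{(s)}(T)\| = \sup_\lambda \|(ev_\lambda\circ q)^{(s)}(T)\|$ for all $T$ in the operator system, so $\pi_k$ cannot strongly peak anywhere and is therefore not boundary.  Your explicit bilateral-dilation idea is a legitimate alternative and morally encodes the same phenomenon (the representation factors through $\cuntz(P)$ up to a unilateral/bilateral discrepancy), but carrying it out requires building a concrete $*$-representation on $\widetilde{\FF}_{P,k}$ compatible with \emph{all} the Toeplitz relations, which is more laborious than the norm estimate the paper uses.
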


\begin{proof}
(1): Assume $k$ is non-exclusive. We use \cite[Theorem 7.2]{arveson-non-commutative-choquet-II} to show that $\pi_k$ is a strongly peaking representation according to \cite[Definition 7.1]{arveson-non-commutative-choquet-II}. Since the irreducible representations of $\toeplitz(P)$ are given by Corollary \ref{cor:irreps-toeplitz}, it suffices to find an element $T \in \tensor(P)$ such that $\| \pi_k(T) \| > \| (ev_{\lambda} \circ q)(T) \|$ for any $\lambda \in \mathbb{T}$ and such that $\| \pi_k(T) \| > \| \pi_{s}(T) \|$ for any $k\neq s$.

Choose $T = T^{(n)}_{E_{kk}}$, and wait until prescribing $n$ is necessary. Recall Notation \ref{notation:state-invariant-fock}, so that
$$
\|\pi_k(T) \| \geq \| \pi(T^{(n)}_{E_{kk}})(e^{(0)}_{kk}) \| = \| \frac{1}{P^{(n)}_{kk}}e^{(n)}_{kk} \| = \frac{1}{P^{(n)}_{kk}}
$$
On the other hand, $q(T^{(n)}_{E_{kk}}) = (z \mapsto z^m E_{kk})$ for $m\in \nn$ satisfying $n = rm$, so that 
$$
\| (ev_{\lambda} \circ q)(T) \| = \| ev_{\lambda}(z \mapsto z^m E_{kk}) \| = |\lambda^m| = 1
$$
So we see that $\|\pi_k(T) \| > \sup_{\lambda \in \mathbb{T}} \| (ev_{\lambda} \circ q)(T) \|$.

Next, fix $s \in \Omega$ with $k \neq s$. Since $T^*T \in \LL(\FF_{Arv(P)})$ sends $Arv(P)_m$ to $Arv(P)_m$, it is a finite-block diagonal operator, so we must have that $T^*T |_{\FF_{P,s}} = (T^{(n)}_{E_{kk}})^*(T^{(n)}_{E_{kk}}) |_{\FF_{P,s}}$ is also finite-block diagonal. Denote $I(k,s) = \{ \ m \in \nn \ | \ (k,s) \in E(P^m), \ m\geq 1 \ \}$, and note that since $T|_{\FF_{P,s}}(Arv(P)_{0,s}) = 0$, we have that
$$
\| \pi_{s}(T) \|^2 = \| \pi_{s}(T^*T) \| = \| T^*T |_{\FF_{P,s}} \| = \sup_{m\in \nn} \| T^*T |_{Arv(P)_{m,s}} \| = 
$$ 
$$
\sup_{m\in I(k,s)} \| (T^{(n)}_{E_{kk}})^*(T^{(n)}_{E_{kk}})(e^{(m)}_{ks}) \| = \sup_{m \in I(k,s)} \frac{P^{(m)}_{ks}}{P^{(n+m)}_{ks}} \| e^{(m)}_{ks} \| = \sup_{m \in I(k,s)} \frac{P^{(m)}_{ks}}{P^{(n+m)}_{ks}}
$$
By Theorem \ref{theorem:convergence-theorem-finite} we see that as $m \in I(k,s)$ goes to infinity, the fraction $\frac{P^{(m)}_{ks}}{P^{(n+m)}_{ks}}$ converges to the constant $\frac{\nu_{s} r}{\nu_{s} r} = 1$. 

Hence, if $\sup_{m \in I(k,s)} \frac{P^{(m)}_{ks}}{P^{(n+m)}_{ks}} \leq 1$, as $k$ is non-exclusive, we have that $P^{(n)}_{kk} < 1$ for large enough $n$, and so that $\| \pi_k(T) \| > \| \pi_{s}(T) \|$. 

On the other hand, if $\sup_{m \in I(k,s)} \frac{P^{(m)}_{ks}}{P^{(n+m)}_{ks}} > 1$, then the supremum above is in fact a maximum, and $s$ must be non-exclusive. By item (2) of Lemma \ref{lemma:non-exclusive-char} there is $n$ large enough (which we now prescribe) so that $0 < P^{(n)}_{kk} < 1$ and $P^{(n)}_{kk}P^{(m)}_{ks} < P^{(n+m)}_{ks}$ for all $m\in I(k,s)$. Hence, we see that $\frac{1}{P^{(n)}_{kk}} > \frac{P^{(m)}_{ks}}{P^{(n+m)}_{ks}}$ for all $m\in I(k,s)$ so that still we obtain $\| \pi_k(T) \| > \| \pi_{s}(T) \|$.

To conclude, we have shown that $\| \pi_k(T) \| > \max\{ \sup_{s \neq k } \{ \| \pi_{s}(T) \| \} , \sup_{\lambda \in \mathbb{T}} \|(ev_{\lambda} \circ q)(T) \| \} $ so that by \cite[Theorem 7.2]{arveson-non-commutative-choquet-II} we have that $\pi_k$ is a boundary representation.

(2): Suppose that $k$ is exclusive. By the formula for $T^{(n)}_{A}$, we see that $\pi_k(T^{(n)}_{A}) = \pi_k(W^{(n)}_{A})$. Indeed, this follows since any weights appearing in an application of $T^{(n)}_{A}$ to a $k$-th column of a matrix $B \in Arv(P)_m$ arise only from entries of the $k$-th columns of $P^n$, which are either $0$ or $1$ by assumption on $k$. 

We will use the above to show that $\pi_k$ is not strongly peaking anywhere by showing that it is not strongly peaking at any $\sum_{n=-N}^N[T_{ij}] \in M_s(\tensor(P)^* + \tensor(P))$ where each $T_{ij} \in \tensor(P)^* + \tensor(P)$ is of degree $n \in [-N, N]$ (which must then be either of the form $T^{(n)}_A$ or $T^{(n)*}_A$). We also denote by $W_{ij}$ the element (which is either of the form $W^{(n)}_{A}$ or $W^{(n)*}_A$ respectively) satisfying $\pi_k(T_{ij}) = \pi_k(W_{ij})$ above.

We note finally that there exists $m_0$ such that for all $m\geq m_0$ we have that $(U^m)^*W_{ij} U^m = W_{ij}$ for all $i,j$. We then have that
$$
\|\pi^{(s)}_k(\sum_{n=-N}^N[T_{ij}]) \| = \| \sum_{n=-N}^N[\pi_k(T_{ij})] \| = \| \sum_{n=-N}^N[\pi_k(W_{ij})] \| =
$$
$$
\| \sum_{n=-N}^N[ (U^m \otimes Id)^* \pi_k(W_{ij}) (U^m \otimes Id)] \| = \| \sum_{n=-N}^N[\pi_k(U^{m*}W_{ij}U^m) ] \|
$$
$$
\leq \| [U^{m*} (\sum_{n=-N}^N W_{ij}) U^m] \| \leq  \| [Q_{[m, \infty)} (\sum_{n=-N}^N W_{ij}) Q_{[m, \infty)}] \|
$$
So we see that by Proposition \ref{prop:complete-norm-cuntz}
$$
\|\pi^{(s)}_k(\sum_{n=-N}^N[T_{ij}]) \| \leq \lim_{m \rightarrow \infty} \| [(\sum_{n=-N}^N W_{ij}) Q_{[m, \infty)}] \| = \| [q(\sum_{n=-N}^N W_{ij})] \| = \| q^{(s)}(\sum_{n=-N}^N [T_{ij}]) \|
$$

Since $q^{(s)}(\sum_{n=-N}^N [T_{ij}]) \in C(\mathbb{T},M_d) \otimes M_s$, there exists $\lambda \in \mathbb{T}$ such that 
$$
\|\pi^{(s)}_k(\sum_{n=-N}^N[T_{ij}]) \| \leq \|q^{(s)}(\sum_{n=-N}^N [T_{ij}])\| = \| (ev_{\lambda} \circ q)^{(s)}(\sum_{n=-N}^N [T_{ij}])\|
$$
Since elements of the form $\sum_{n=-N}^N[T_{ij}]$ with $T_{ij}$ of degree $n$ are dense inside $M_s(\tensor(P)^* + \tensor(P))$, we see that for any $[V_{ij}] \in M_s(\tensor(P)^* + \tensor(P))$ we have $\|[\pi_k(V_{ij})]\| \leq \sup_{\lambda \in \mathbb{T}} \| [(ev_{\lambda} \circ q)(V_{ij}) \|$ so that $\pi_k$ cannot be strongly peaking. By \cite[Theorem 7.2]{arveson-non-commutative-choquet-II} we see that $\pi_k$ is not a boundary representation.
\end{proof}

\begin{remark}
It is clear that for exclusive $k\in \Omega$ the representation $\pi_k$ is not boundary by item (2) of Propositon \ref{prop:boundary-rep-exculsive}. Item (1) in Proposition \ref{prop:boundary-rep-exculsive} above provides a sufficient condition for $\pi_k$ to be boundary when $k$ is non-exclusive. We believe that this condition is not necessary, however we do not have examples to that effect.
\end{remark}

We next introduce a class of stochastic matrices for which we can completely identify the non-commutative Choquet boundary of $\tensor(P)$ inside $\toeplitz(P)$ in terms of the matrix $P$.

\begin{defi} \label{def:multiple-arrival}
Let $P$ be a finite $r$-periodic irreducible stochastic matrix over $\Omega$. We say that $P$ has the \emph{multiple-arrival} property if whenever $k, s \in \Omega$ are distinct non-exclusive states such that whenever $k$ leads to $s$ in $n$ steps, then there exists $k \neq k' \in \Omega$ such that $k'$ leads to $s$ in $n$ steps.
\end{defi}

\begin{cor} \label{cor:multiple-arrival-boundary}
Let $P$ be a finite irreducible stochastic matrix over $\Omega$, and $k\in \Omega$. If $P$ has the multiple-arrival property, then $\pi_k$ is a boundary representation if and only if $k$ is non-exclusive. Hence, the non-commutative Choquet boundary of $\tensor(P)$ inside $\toeplitz(P)$ is parameterized by a circle $\mathbb{T}$ of irreducible representations of dimension $d$, and irreducible representations $\pi_k$ of infinite dimension associated to non-exclusive states $k\in \Omega$.
\end{cor}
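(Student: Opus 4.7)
The plan is to combine Proposition~\ref{prop:boundary-rep-exculsive} with Proposition~\ref{prop:annihi-auto-uep} and the enumeration of irreducible representations of $\toeplitz(P)$ in Corollary~\ref{cor:irreps-toeplitz}; most of the real work has been carried out in those results, so what remains is essentially a bookkeeping step.

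For the equivalence in the first claim, the implication that exclusive $k$ does not produce a boundary representation is immediate from Proposition~\ref{prop:boundary-rep-exculsive}(2). For the other direction, suppose $k$ is non-exclusive and fix a cyclic decomposition $\Omega_0,\ldots,\Omega_{r-1}$ with $k\in\Omega_0$. I would verify the hypothesis of Proposition~\ref{prop:boundary-rep-exculsive}(1): given a non-exclusive state $s\neq k$ and $m\in\nn$ with $P^{(m)}_{ks}>0$, the multiple-arrival property supplies some $k'\in\Omega$ with $k'\neq k$ and $P^{(m)}_{k's}>0$. The only point requiring a small additional check is that this $k'$ must in fact lie in $\Omega_0$. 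This is forced by the cyclic decomposition (Theorem~\ref{theorem:cyclic-graph-decomposition}): if $\omega$ is the reference state, then any path of the form $\omega\to k\to s$ has length in the class $\sigma(k)+m\bmod r$ while any path $\omega\to k'\to s$ has length in $\sigma(k')+m\bmod r$, and since $s$ lies in a single cyclic class this forces $\sigma(k')=\sigma(k)=0$, so $k'\in\Omega_0$. Proposition~\ref{prop:boundary-rep-exculsive}(1) therefore applies and $\pi_k$ is a boundary representation.

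For the parametrization of the non-commutative Choquet boundary, Corollary~\ref{cor:irreps-toeplitz} exhausts the spectrum of $\toeplitz(P)$ by the two families $\{\pi_k\}_{k\in\Omega}$ and $\{ev_\lambda\circ q\}_{\lambda\in\torus}$. Every $ev_\lambda\circ q$ annihilates $\JJ(P)$, so Proposition~\ref{prop:annihi-auto-uep} guarantees that its restriction to $\tensor(P)$ has the unique extension property; being already irreducible (it surjects onto the simple algebra $M_d$), it sits in the Choquet boundary. Combined with the first part, the boundary consists exactly of the torus of representations $ev_\lambda\circ q$ together with the $\pi_k$ coming from non-exclusive states, as claimed.

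The main (and really quite minor) obstacle is the cyclic-class verification in the middle paragraph; beyond that, the corollary is an assembly of the preceding propositions, with the multiple-arrival property engineered precisely so that the sufficient condition in Proposition~\ref{prop:boundary-rep-exculsive}(1) is automatic at every non-exclusive state.
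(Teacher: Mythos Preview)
Your proof is correct and follows the same approach as the paper, which simply notes that multiple-arrival makes the hypothesis of Proposition~\ref{prop:boundary-rep-exculsive}(1) automatic and invokes part~(2) for the converse. You actually supply more detail than the paper does: the paper's proof omits both the verification that the state $k'$ produced by multiple-arrival lies in $\Omega_0$ (your cyclic-class argument) and any justification of the second sentence about the full Choquet boundary, which you correctly derive from Corollary~\ref{cor:irreps-toeplitz} and Proposition~\ref{prop:annihi-auto-uep}.
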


\begin{proof}
This follows directly since if $P$ has multiple-arrival, then the conditions of Proposition \ref{prop:boundary-rep-exculsive} item (1) are automatically satisfied for any non-exculsive $k\in \Omega$, and item (2) of Proposition \ref{prop:boundary-rep-exculsive} then gives the reverse implication.
\end{proof}

There is an easy class of examples which automatically has the multiple arrival property. Suppose that $P$ is an irreducible $r$-periodic stochastic matrix with cyclic decomposition $\Omega_0,..., \Omega_{r-1}$. Then we may write
$$
\left[\begin{smallmatrix}
       0 & P_0  & \cdots & 0 \\
       \vdots  & \ddots  & \ddots & \vdots  \\
       0 &  \cdots & 0 & P_{r-2} \\
       P_{r-1}  & \cdots & 0 & 0
     \end{smallmatrix} \right]
$$
for rectangle stochastic matrices $P_0,...,P_{r-1}$. If all entries of the matrices $P_0,...,P_{r-1}$ are non-zero, then $P$ is called \emph{fully-supported}, and has the multiple-arrival property.

Suppose $P$ is a finite irreducible stochastic matrix $P$ over $\Omega$ of size $d$. We next discuss $\cenv(\tensor(P))$ and its spectrum. Denote by $\Omega_b$ the set of states $k$ for which $\pi_k$ is a boundary representation, which is a subset of $\Omega - \Omega_e$. Since for all $k\in \Omega$ and $\lambda \in \mathbb{T}$ we have that $\Ker \pi_k \subset \JJ \subset \ker ev_{\lambda} \circ q$, and since the intersection of kernels of all boundary representations is the Shilov ideal, we must have that $\pi^{-1}(\oplus_{k \in  \Omega - \Omega_b}\KK(\FF_{P,k}))$ is the Shilov ideal of $\tensor(P)$ inside $\toeplitz(P)$, thought of as a subalgebra of $\pi^{-1}(\oplus_{k \in \Omega}\KK(\FF_{P,k})) = \JJ$.

We hence get the following short exact sequence
\begin{equation} \label{eq:cenv-short-exact}
0 \longrightarrow \oplus_{k \in \Omega_b}\KK(\FF_{P,k}) \longrightarrow \cenv(\tensor(P)) \longrightarrow C(\mathbb{T},M_d) \longrightarrow 0
\end{equation}
while we identify $q_e(\JJ(P)) \subset \cenv(\tensor(P))$ with $\oplus_{k \in \Omega_b}\KK(\FF_{P,k})$, where $q_e : \toeplitz(P) \rightarrow \cenv(\tensor(P))$ is the quotient map by the Shilov ideal.

If $\rho : \cenv(\tensor(P)) \rightarrow B(H)$ is a unital *-representation, it decomposes uniquely into a central direct sum of representations $\rho = \rho_{\JJ} \oplus \rho_{\cuntz}$, where $\rho_{\JJ}$ is the unique extension to $\cenv(\tensor(P))$ of the restriction of $\rho$ to $q_e(\JJ(P))$, and $\rho_{\cuntz}$ annihilates $q_e(\JJ(P))$. Hence, the spectrum of $\cenv(\tensor(P))$ decomposes into a disjoint union of the spectrum of $\oplus_{k \in \Omega_b}\KK(\FF_{P,k})$ and the spectrum of $C(\mathbb{T},M_d)$. That is, the spectrum of $\cenv(\tensor(P))$ is comprised of $|\Omega_b|$ irreducible representations of infinite dimension, and a torus $\mathbb{T}$ of irreducible representations of dimension $d$ that annihilate $q_e(\JJ(P))$.

\begin{theorem} \label{theorem:shilov-ideal-hyperrigid}
Suppose that $P$ is a finite irreducible matrix over $\Omega$. Then $\tensor(P)$ is hyperrigid in $\cenv(\tensor(P))$. Moreover, if $P$ has multiple-arrival, the Shilov ideal for $\tensor(P)$ inside $\toeplitz(P)$ is given by
$$
\mathcal{S}_P = \bigcap_{k \in \Omega - \Omega_e} \{ \ T \in \JJ(P) \ | \ \pi_k(T) = 0 \ \}
$$
and is *-isomorphic via $\pi$ to $\oplus_{k \in \Omega_e} \KK(\FF_{P,k})$
\end{theorem}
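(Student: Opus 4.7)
The proof separates into the hyperrigidity statement (which does not require multiple-arrival) and the identification of the Shilov ideal (which does).

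For hyperrigidity, the goal is to show that every non-degenerate $*$-representation $\rho : \cenv(\tensor(P)) \to B(H)$ restricts to $\tensor(P)$ as a UCP map with the unique extension property. Given such $\rho$, the ideal $q_e(\JJ(P)) \cong \bigoplus_{k \in \Omega_b} \KK(\FF_{P,k})$ appearing in the exact sequence \eqref{eq:cenv-short-exact} produces the standard central decomposition $\rho = \rho_{\JJ} \oplus \rho_{\cuntz}$, with $\rho_{\JJ}$ the canonical extension of the restriction of $\rho$ to that ideal, and $\rho_{\cuntz}$ annihilating it. The summand $\rho_{\cuntz}$ factors through $\cenv(\tensor(P)) / q_e(\JJ(P)) \cong C(\mathbb{T}, M_d)$; lifting it through $q_e$ yields a $*$-representation of $\toeplitz(P)$ annihilating $\JJ(P)$, so Proposition \ref{prop:annihi-auto-uep} provides the UEP for $\rho_{\cuntz}|_{\tensor(P)}$. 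The summand $\rho_{\JJ}$ is, by the representation theory of direct sums of compact operators, unitarily equivalent to a direct sum of multiples of the (essentially unique) irreducible representations of each $\KK(\FF_{P,k})$ with $k \in \Omega_b$; extended to $\cenv(\tensor(P))$ these become multiples of the corresponding $\pi_k$, each of which is a boundary representation of $\tensor(P)$ that descends to $\cenv(\tensor(P))$ because it automatically annihilates the Shilov ideal. Using that the UEP is preserved under direct sums and multiplicities (a standard consequence of the maximality characterization of the UEP), we conclude that $\rho_{\JJ}|_{\tensor(P)}$ has the UEP, and hence so does $\rho|_{\tensor(P)}$.

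For the identification of the Shilov ideal under multiple-arrival, Corollary \ref{cor:multiple-arrival-boundary} gives the non-commutative Choquet boundary of $\tensor(P)$ inside $\toeplitz(P)$ as $\{\pi_k : k \in \Omega - \Omega_e\} \cup \{ev_\lambda \circ q : \lambda \in \mathbb{T}\}$. Since the Shilov ideal $\mathcal{S}_P$ equals the intersection of the kernels of all boundary representations, and since $\bigcap_{\lambda \in \mathbb{T}} \ker(ev_\lambda \circ q) = \ker q = \JJ(P)$, the Shilov ideal is contained in $\JJ(P)$ and satisfies $\mathcal{S}_P = \bigcap_{k \in \Omega - \Omega_e}\{T \in \JJ(P) : \pi_k(T) = 0\}$. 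Applying the $*$-isomorphism $\pi : \JJ(P) \to \bigoplus_{k \in \Omega} \KK(\FF_{P,k})$ from Proposition \ref{prop:toeplitz-char}, the condition $\pi_k(T) = 0$ for every non-exclusive $k$ kills precisely the corresponding compact summands, leaving $\bigoplus_{k \in \Omega_e} \KK(\FF_{P,k})$ as claimed.

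The main technical step to pin down is the preservation of the UEP under direct sums and multiplicities, used for the $\rho_{\JJ}$ summand; this may be handled directly via the equivalence between the UEP and maximality of UCP dilations (any UCP dilation of a direct sum restricts to a dilation of each summand and these peel off individually), or, alternatively, one may exploit that $\cenv(\tensor(P))$ is type I, as an extension of $C(\mathbb{T}, M_d)$ by compacts, and appeal to known hyperrigidity results in the type I setting.
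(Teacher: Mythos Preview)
Your proposal is correct and follows essentially the same route as the paper: the central decomposition $\rho = \rho_{\JJ} \oplus \rho_{\cuntz}$, Proposition~\ref{prop:annihi-auto-uep} for the Cuntz summand, the identification of $\rho_{\JJ}$ with a direct sum of multiples of boundary $\pi_k$'s, and then closure of the UEP under direct sums. For the latter step the paper simply invokes \cite[Theorem~4.4]{arveson-non-commutative-choquet-II} (twice: once for $\rho_{\JJ}$ and once to recombine with $\rho_{\cuntz}$), which is exactly the ``UEP is preserved under direct sums'' fact you isolate; your suggested alternative via type~I hyperrigidity is not needed and is not what the paper does.
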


\begin{proof}
Let $\rho : \cenv(\tensor(P)) \rightarrow B(H)$ be a *-representation. By the above discussion, we may decompose it into a central direct sum of representations $\rho = \rho_{\JJ} \oplus \rho_{\cuntz}$, where $\rho_{\JJ}$ is the unique extension to $\cenv(\tensor(P))$ of the restriction of $\rho$ to $q_e(\JJ(P))$, and $\rho_{\cuntz}$ annihilates $q_e(\JJ(P))$.

By Proposition \ref{prop:annihi-auto-uep} we have that $\rho_{\cuntz} \circ q_e$ has u.e.p. when restricted to $\tensor(P)$, so that $\rho_{\cuntz}$ has u.e.p. when restricted to $q_e(\tensor(P))$ by invariance of maximal UCP maps. Next, since $\rho_{\JJ} \circ q_e = \oplus_{k \in \Omega_b} n_k \cdot \pi_k$ is a direct sum of *-representations, with certain multiplicities $n_k$, that have u.e.p. when restricted to $\tensor(P)$, by \cite[Theorem 4.4]{arveson-non-commutative-choquet-II} $\rho_{\JJ} \circ q_e$ has u.e.p. when restricted to $\tensor(P)$. Hence, again by invariance of maximal UCP maps, $\rho_{\JJ}$ has u.e.p. when restricted to $q_e(\tensor(P))$. By another application of \cite[Theorem 4.4]{arveson-non-commutative-choquet-II} we obtain that $\rho = \rho_{\JJ} \oplus \rho_{\cuntz}$ also has u.e.p. when restricted to $q_e(\tensor(P))$, so that $\tensor(P)$, which is completely isometric to $q_e(\tensor(P))$ via $q_e$, is hyperrigid within $\cenv(\tensor(P))$.

For the second part, by Corollary \ref{cor:multiple-arrival-boundary} we know that $\Omega_e = \Omega - \Omega_b$. Furthermore, by Proposition \ref{prop:annihi-auto-uep}, we have that $ev_{\lambda} \circ q$ is a boundary representation for $\tensor(P)$ for any $\lambda \in \mathbb{T}$, and since $\JJ(P) = \bigcap_{\lambda \in \mathbb{T}}\Ker(ev_{\lambda} \circ q)$, by the discussion preceding the theorem, the Shilov ideal must equal 
$$
\JJ(P) \cap \bigcap_{k\in \Omega_b} \Ker(\pi_k) = \bigcap_{k \in \Omega - \Omega_e} \{ \ T \in \JJ(P) \ | \ \pi_k(T) = 0 \ \}
$$
\end{proof}

We now give equivalent conditions that guarantee that the C*-envelope of $\tensor(P)$ is either the Toeplitz algebra, or the Cuntz-Pimsner algebra.

\begin{cor}
Let $P$ be a finite irreducible stochastic matrix of size $d$ with multiple-arrival. Then we have that $\cenv(\tensor(P)) \cong \toeplitz(P)$ if and only if all $k\in \Omega$ are non-exclusive. 

In particular, if $P$ aperiodic and of size $d \geq 2$ with multiple-arrival, we have that $\cenv(P) \cong \toeplitz(P)$.
\end{cor}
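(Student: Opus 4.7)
The plan is to reduce the biconditional to a statement about the Shilov ideal using Corollary \ref{cor:multiple-arrival-boundary} and Theorem \ref{theorem:shilov-ideal-hyperrigid}. Under the multiple-arrival hypothesis, the boundary states are precisely $\Omega_b = \Omega \setminus \Omega_e$, and the Shilov ideal of $\tensor(P)$ inside $\toeplitz(P)$ is identified as $\mathcal{S}_P \cong \bigoplus_{k\in \Omega_e} \KK(\FF_{P,k})$. In particular, $\mathcal{S}_P = 0$ if and only if $\Omega_e = \emptyset$. Since $\cenv(\tensor(P)) = \toeplitz(P)/\mathcal{S}_P$ via the canonical quotient, the ``if'' direction is immediate: when every $k$ is non-exclusive, the quotient map is an isomorphism.

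For the converse, I need to rule out the possibility that $\cenv(\tensor(P))$ and $\toeplitz(P)$ are abstractly *-isomorphic even when $\mathcal{S}_P$ is nontrivial. The cleanest route is to count minimal nonzero closed two-sided ideals. From the short exact sequences
\[
0 \longrightarrow \bigoplus_{k \in \Omega} \KK(\FF_{P,k}) \longrightarrow \toeplitz(P) \longrightarrow C(\mathbb{T},M_d) \longrightarrow 0
\]
and the analogous sequence \eqref{eq:cenv-short-exact} for $\cenv(\tensor(P))$, together with the fact that $C(\mathbb{T},M_d)$ contains no minimal nonzero closed two-sided ideal (the torus has no isolated points), any minimal closed ideal of the ambient algebra must be contained in its compact-operator ideal. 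Since each summand $\KK(\FF_{P,k})$ is invariant under the $\toeplitz(P)$-action (because $\FF_{P,k}$ is a reducing subspace for $\pi(\toeplitz^c(P))$) and is itself simple, the minimal closed two-sided ideals are exactly these summands. Hence $\toeplitz(P)$ has exactly $|\Omega|$ such minimal ideals whereas $\cenv(\tensor(P))$ has $|\Omega_b| = |\Omega| - |\Omega_e|$. If $\Omega_e \neq \emptyset$ these cardinal invariants differ and the algebras cannot be *-isomorphic, completing the ``only if'' direction.

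For the ``in particular'' claim, aperiodicity means $r=1$, so the cyclic decomposition reduces to the single class $\Omega_0 = \Omega$, and the hypothesis $d \geq 2$ gives $|\Omega_0| \geq 2$. Lemma \ref{lemma:non-exclusive-char}(1) then forces every state to be non-exclusive, so $\Omega_e = \emptyset$ and the first part applies to yield $\cenv(\tensor(P)) \cong \toeplitz(P)$.

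The only real obstacle is the converse direction, where one must be careful not to conflate the canonical quotient map failing to be an isomorphism with the algebras being non-isomorphic as abstract C*-algebras. The minimal-ideal count above handles this cleanly, but the verification that each $\KK(\FF_{P,k})$ is in fact a $\toeplitz(P)$-ideal (and not merely a $\JJ(P)$-ideal) must invoke the structure of the faithful representation $\pi = \bigoplus_k \pi_k$ on $\bigoplus_k \FF_{P,k}$ established earlier in the section.
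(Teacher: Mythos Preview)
Your proof is correct and follows essentially the same approach as the paper. The ``if'' direction and the ``in particular'' clause match the paper's argument verbatim. For the converse, the paper counts unitary equivalence classes of infinite-dimensional irreducible representations (which is $d$ for $\toeplitz(P)$ by Corollary~\ref{cor:irreps-toeplitz} and $|\Omega_b|$ for $\cenv(\tensor(P))$), whereas you count minimal nonzero closed two-sided ideals; these are equivalent invariants here since the minimal ideals $\KK(\FF_{P,k})$ correspond bijectively to the infinite-dimensional irreducible representations $\pi_k$. Your route requires the extra observation that $\JJ(P)$ is essential (or equivalently your argument that a minimal ideal disjoint from $\JJ(P)$ would project to a minimal ideal of $C(\mathbb{T},M_d)$, which has none), but this is straightforward from the faithfulness of $\pi = \bigoplus_k \pi_k$, and you correctly flag the need to verify that each $\KK(\FF_{P,k})$ is a $\toeplitz(P)$-ideal via the reducing subspace structure.
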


\begin{proof}
By Theorem \ref{theorem:shilov-ideal-hyperrigid} we see that if all $k\in \Omega$ are non-exclusive, then $\mathcal{S}_P = \{0\}$ and so $\toeplitz(P)$ is the C*-envelope of $\tensor(P)$.

Conversely, if $\toeplitz(P) \cong \cenv(\tensor(P))$, then $\cenv(\tensor(P))$ has $d$ irreducible representations of infinite dimension, which can only occur if $|\Omega_b| = d$. Since $P$ has multiple-arrival, we see by Corollary \ref{cor:multiple-arrival-boundary} that $\Omega_b = \Omega - \Omega_e$, and so all states $k\in \Omega$ are non-exclusive. 

For the second part, our assumptions guarantee that $\Omega = \Omega_0$ is the cyclic decomposition for $P$, and $|\Omega| > 1$, so by Lemma \ref{lemma:non-exclusive-char} all elements in $\Omega$ are non-exclusive. Since $P$ has multiple-arrival, by Corollary \ref{cor:multiple-arrival-boundary} we have that $\pi_k$ is a boundary representation for any $k\in \Omega$, and so $\mathcal{S}_P = \{0\}$ and $\cenv(\tensor(P)) \cong \toeplitz(P)$.
\end{proof}

\begin{cor} \label{cor:homomorphism-cuntz}
Let $P$ be a finite $r$-periodic irreducible stochastic matrix of size $d$, and let $\Omega_0,...,\Omega_{r-1}$ be a cyclic decomposition for $P$. The following are equivalent:
\begin{enumerate}
\item
All $k\in \Omega$ are exclusive.
\item
$|\Omega_{\ell}| = 1$ for all $\ell \in \mathbb{Z}_r$, or equivalently $r=d$.
\item
$P : \ell^{\infty}(\Omega) \rightarrow \ell^{\infty}(\Omega)$ is a *-homomorphism.
\item
$\cenv(\tensor(P)) \cong \cuntz(P)$

\end{enumerate}
\end{cor}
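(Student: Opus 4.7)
The plan is to establish $(1)\Leftrightarrow(2)\Leftrightarrow(3)$ and $(1)\Rightarrow(4)$ by routine arguments, and then derive the harder reverse implication $(4)\Rightarrow(2)$ by an explicit norm comparison on a single generator. The step $(4)\Rightarrow(2)$ is what I expect to be the main obstacle.

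For $(1)\Leftrightarrow(2)$, I will apply Lemma~\ref{lemma:non-exclusive-char}(1) to each block of the cyclic decomposition: although stated for $\Omega_0$, by choosing the base state $\omega$ appropriately each $\Omega_\ell$ can play the role of $\Omega_0$, so $k\in\Omega_\ell$ is exclusive iff $|\Omega_\ell|=1$. Since $\sum_\ell|\Omega_\ell|=d$ across $r$ blocks, condition $(1)$ is equivalent to every $|\Omega_\ell|=1$, equivalently $r=d$. For $(2)\Leftrightarrow(3)$, the action of $P$ on $\ell^\infty(\Omega)$ sends $f$ to $(\sum_j P_{ij}f(j))_i$, so the multiplicativity condition $P(p_j p_k)=P(p_j)P(p_k)$ forces $P_{ij}^2=P_{ij}$ and $P_{ij}P_{ik}=0$ for $j\neq k$. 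Together with $\sum_j P_{ij}=1$ this says each row of $P$ is a point mass, and by irreducibility $P$ must then be a cyclic permutation, so $r=d$. For $(1)\Rightarrow(4)$, if every state is exclusive then Proposition~\ref{prop:boundary-rep-exculsive}(2) gives $\Omega_b=\emptyset$, so the short exact sequence \eqref{eq:cenv-short-exact} collapses to $\cenv(\tensor(P))\cong C(\torus,M_d)\cong\cuntz(P)$ by Theorem~\ref{theorem:main}.

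For $(4)\Rightarrow(2)$, the strategy is to use that $(4)$ forces the composition $\tensor(P)\hookrightarrow\cenv(\tensor(P))\xrightarrow{\sim}\cuntz(P)$ to be completely isometric; by commutativity of \eqref{quotient-sequence} this composition coincides with the restriction of the canonical quotient $q:\toeplitz(P)\to\cuntz(P)$ to $\tensor(P)$. I will test this isometry on the generators $T^{(1)}_{E_{ij}} = \frac{1}{\sqrt{P_{ij}}}\,S^{(1)}_{E_{ij}}$ for each $(i,j)\in E(P)$. The computation $T^{(1)}_{E_{ij}}(p_j)=\frac{1}{\sqrt{P_{ij}}}E_{ij}$ combined with the uniform bound $P^{(m+1)}_{i\ell}\geq P_{ij}P^{(m)}_{j\ell}$ yields $\|T^{(1)}_{E_{ij}}\|_{\tensor(P)} = 1/\sqrt{P_{ij}}$. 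On the Cuntz-Pimsner side, the difference $T^{(1)}_{E_{ij}}-W^{(1)}_{E_{ij}}$ acts on $E_{k\ell}\in Arv(P)_m$ by the scalar $\delta_{jk}\bigl(\sqrt{P^{(m)}_{j\ell}/P^{(m+1)}_{i\ell}}-1\bigr)$, which tends to zero as $m\to\infty$ by Theorem~\ref{theorem:convergence-theorem-finite}; hence this difference lies in $\JJ(\toeplitz(P))$ and $\overline{T^{(1)}_{E_{ij}}}=\overline{W^{(1)}_{E_{ij}}}$. A direct application of Proposition~\ref{prop:complete-norm-cuntz} to the contractive action $W^{(1)}_{E_{ij}}(E_{k\ell})=\delta_{jk}E_{i\ell}$ then gives $\|\overline{W^{(1)}_{E_{ij}}}\|_{\cuntz(P)}=1$. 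Equating $1/\sqrt{P_{ij}}=1$ for every $(i,j)\in E(P)$ combined with stochasticity forces each row of $P$ to be a point mass, so $P$ is a cyclic permutation and $r=d$.
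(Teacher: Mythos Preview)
Your implications $(1)\Leftrightarrow(2)\Leftrightarrow(3)$ are fine, and your route $(1)\Rightarrow(4)$ via $\Omega_b=\emptyset$ and the short exact sequence \eqref{eq:cenv-short-exact} is a clean alternative to the paper's argument (which instead proves $(3)\Rightarrow(4)$ by observing that $Arv(P)$ is a product system and invoking \cite{katsoulis-kribs-C-envelope}).

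There is, however, a genuine gap in your $(4)\Rightarrow(2)$. You assert that the completely isometric composition $\tensor(P)\hookrightarrow\cenv(\tensor(P))\xrightarrow{\sim}\cuntz(P)$ \emph{coincides} with $q|_{\tensor(P)}$ ``by commutativity of \eqref{quotient-sequence}''. But the commutativity of \eqref{quotient-sequence} only says $q=\pi_P\circ q_e$, where $\pi_P$ is the \emph{canonical} surjection $\cenv(\tensor(P))\to\cuntz(P)$; it says nothing about the abstract $*$-isomorphism $\alpha:\cenv(\tensor(P))\to\cuntz(P)$ furnished by hypothesis $(4)$. A priori $\alpha\neq\pi_P$, so $\alpha\circ q_e|_{\tensor(P)}\neq q|_{\tensor(P)}$, and your norm test on $T^{(1)}_{E_{ij}}$ need not apply. (Surjections between isomorphic $C^*$-algebras can have nontrivial kernel in general.)

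The gap is easily repaired, but it requires an extra argument. One option is exactly what the paper does for $(4)\Rightarrow(1)$: assume some $k$ is non-exclusive, perform the norm comparison (the paper uses $T^{(n)}_{E_{kk}}$ rather than your $T^{(1)}_{E_{ij}}$, but either works) to see that $q$ is not completely isometric on $\tensor(P)^*+\tensor(P)$, then invoke \cite[Theorem~7.2]{arveson-non-commutative-choquet-II} to produce a boundary representation supported on $\JJ(P)$. This gives $\cenv(\tensor(P))$ an irreducible representation of infinite dimension, which is impossible for $C(\torus,M_d)$. Alternatively, you can observe directly that if $\Omega_b\neq\emptyset$ then $\cenv(\tensor(P))$ has an infinite-dimensional irreducible representation $\pi_k$, whereas $\cuntz(P)\cong C(\torus,M_d)$ has only $d$-dimensional ones; hence $(4)$ forces $\Omega_b=\emptyset$, so $\Ker\pi_P=\oplus_{k\in\Omega_b}\KK(\FF_{P,k})=0$ and $\pi_P$ \emph{is} an isomorphism. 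Only then does your norm test on $T^{(1)}_{E_{ij}}$ legitimately yield $P_{ij}=1$ for all $(i,j)\in E(P)$.
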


\begin{proof}
$(1) \Rightarrow (2)$: By item (1) of Lemma \ref{lemma:non-exclusive-char} we see that $|\Omega_{\ell}| = 1$ for all $\ell \in \mathbb{Z}_r$.

$(2) \Rightarrow (3)$: If all $\Omega_{\ell}$ are of size $1$, we see that the cyclic decomposition for $P$ yields that $P$ is in fact a permutation matrix of a single-cycle permutation, and is hence a homomorphism.

$(3) \Rightarrow (4)$: The Arveson-Stinespring construction of a subproduct system generally yields a product system when applied to a *-homomorphism. Hence, $Arv(P)$ is a product system, and its tensor algebra is the tensor algebra of a single correspondence, and by \cite[Proposition 2.8]{viselter2} this is also true for the Cuntz-Pimsner algebra in our case. By \cite[Theorem 3.7]{katsoulis-kribs-C-envelope} we have $\cenv(\tensor(P)) \cong \cuntz(P)$.

$(4) \Rightarrow (1)$: Assume towards contradiction that there is some $k\in \Omega$ that is non-exclusive. In this case, let $n$ be so that $0 < P^{(n)}_{kk} < 1$, and observe that $\| q(T^{(n)}_{E_{kk}}) \| = \| q(W^{(n)}_{E_{kk}}) \| = 1$, while for $E_{kk} \in Arv(P)_0$ we have
$$
\|T^{(n)}_{E_{kk}}\| \geq \|T^{(n)}_{E_{kk}}(E^{(0)}_{kk}) \| = \frac{1}{P^{(n)}_{kk}} \|E^{(n)}_{kk}\| = \frac{1}{P^{(n)}_{kk}}
$$
This means that $q : \toeplitz(P) \rightarrow \cuntz(P)$ is not isometric on $\tensor(P)^* + \tensor(P)$, and in particular, not completely isometric on $\tensor(P)^* + \tensor(P)$. By \cite[Theorem 7.2]{arveson-non-commutative-choquet-II}, there is a boundary representation for $\tensor(P)$ coming from an extension to $\toeplitz(P)$, of an element in the spectrum of $\JJ(P)$, which then must be equivalent to one of the $\pi_k$. This means that $\cenv(\tensor(P))$ has an irreducible representation of infinite dimension, which is impossible since $\cenv(\tensor(P)) \cong C(\mathbb{T},M_d)$ only has irreducible representations of dimension $d$.
\end{proof}

\begin{example} \label{ex:distinct}
We next give an example of $3 \times 3$ stochastic matrix for which $\cenv(\tensor(P))$, $\toeplitz(P)$ and $\cuntz(P)$ are pairwise non *-isomorphic. Let 
$$
P =  \begin{bmatrix}
       0 & 0  & 1 \\
       0  & 0  & 1 \\
       \frac{1}{2} & \frac{1}{2} & 0 
     \end{bmatrix}
$$
The matrix $P$ is fully-supported and we see that states $1$ and $2$ are non-exclusive, while $3$ is exclusive. Hence, $\Omega_b = \Omega - \Omega_e \subsetneq \Omega$. Therefore, the Shilov ideal $\mathcal{S}_P \cong \KK(H_1) \oplus \KK(H_2) \not \cong \oplus_{j\in \Omega}\KK(H_j) \cong \JJ(P)$. This yields a quotient $\cenv(\tensor(P))$ for which $\cenv(\tensor(P))$, $\toeplitz(P)$ and $\cuntz(P)$ are pairwise non *-isomorphic. 

\end{example}

Without the irreducibility assumption on $P$, it is easy to construct intermediary C*-envelopes from "extremal" C*-envelopes. Indeed, if for finite stochastic matrices $P$ and $Q$ of sizes at least $2$ we have that $\cenv(\tensor(P)) = \toeplitz(P)$, and $\cenv(\tensor(Q)) = \cuntz(Q)$, then $R=P\oplus Q$ is a finite stochastic matrix such that $\cenv(\tensor(R)) = \cenv(\tensor(P)) \oplus \cenv(\tensor(Q)) = \toeplitz(P) \oplus \cuntz(Q)$, and one can similarly use representation theory to show that $\cenv(\tensor(R))$ is non *-isomorphic to $\toeplitz(R)$ nor $\cuntz(R)$.

However, when $P$ is irreducible, the subproduct system $Arv(P)$ associated to it, cannot have any non-trivial reducing projections in the sense of Remark \ref{rem:sps-reducing-proj}, so we have irreducibility both in a dynamical sense and in a sense of its subproduct system. The above example then shows that even under this minimality / irreducibility assumptions on a dynamical object which is equivalent to this irreducibility assumptions on the subproduct system above, up to *-isomorophism the C*-envelope may be distinct from both the Cuntz-Pimsner algebra, and the Toeplitz algebra.

\section{K-Theory} \label{Sec:K-theory}

In this section we compute the K-theory of $\cenv(\tensor(P))$. Recall Notation \ref{notation:state-invariant-fock}. Let $\Omega_b$ be the collection of $k \in \Omega$ for which $\pi_k$ is a boundary representation. We note immediately that we identify $\toeplitz(P)$ with its image under $\pi : \toeplitz(P) \rightarrow B(\oplus_{k\in \Omega} \FF_{P,k})$, where $\FF_{P,k}$ are the invariant subspaces of $\pi$ and $\pi_k : \toeplitz(P) \rightarrow B(\FF_{P,k})$ the irreducible, pairwise non-unitarily equivalent representations given by restriction $\pi_k(T) = T|_{\FF_{P,k}}$, for each $k\in \Omega$. Recall the short exact sequence from \eqref{eq:cenv-short-exact}.

We know from \cite{RordamBook} that $K_0$ and $K_1$ are additive functors, and that for any $k\in \Omega$ we have $K_1(\KK(\FF_{P,k})) = \{0\}$, and $K_0(C(\mathbb{T},M_d)) \cong K_0(\KK(\FF_{P,k})) \cong K_1(C(\mathbb{T},M_d)) \cong \mathbb{Z}$. Hence, the six-term exact sequence of K-theory induced from the exact sequence \eqref{eq:cenv-short-exact} yields
\begin{equation} \label{eq:six-term-exact}
 \begin{array}{ccccl}
  0 & \longrightarrow & K_1(C_{env}^*(\tensor(P))) & \longrightarrow & \mathbb{Z} \\
  \uparrow & \ & \ & \ & \downarrow \delta_1 \\
  \mathbb{Z} & \longleftarrow & K_0(C_{env}^*(\tensor(P))) & \longleftarrow & \mathbb{Z}^{|\Omega_b|}
\end{array}
\end{equation}
Our goal in this section is to compute the index map $\delta_1 : K_1(C(\mathbb{T},M_d)) \rightarrow K_0(\oplus_{k\in \Omega_b} \KK(\FF_{P,k}))$, which will then enable the computation of the $K_0$ and $K_1$ groups for $\cenv(\tensor(P))$. It will suffice to compute the value of $\delta_1$ on a generator of $K_1(C(\mathbb{T},M_d)) \cong \mathbb{Z}$, and in our computations we will work with the unitary element $w : = z \mapsto diag(z,1,...,1) \in C(\mathbb{T},M_d)$, as $[w]_1$ is a generator for $K_1(C(\mathbb{T},M_d)) \cong \mathbb{Z}$.

\begin{lemma} \label{lemma:V-partial-isom}
Let $P$ be an $r$-periodic irreducible stochastic matrix over $\Omega$ of size $d$, with properly enumerated cyclic decomposition $\Omega_0,...,\Omega_{r-1}$ such that $1\in \Omega_0$ is the first element, and let $(U, (S_{ij})_{i,j\in \Omega})$ be its associated standard family. 
\begin{enumerate}
\item
For all $i\in \Omega$ we have that $(S_{ii} = p_i)_{i\in \Omega}$ is a family of pairwise orthogonal projections that commute with $U$.
\item
The element $w: = z \mapsto diag(z,1,...,1) \in C(\mathbb{T},M_d)$ lifts to a partial isometry $V := US_{11} + S_{22} + ... S_{dd}$ inside $\toeplitz(P)$.
\end{enumerate}
\end{lemma}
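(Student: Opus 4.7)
The plan is to prove (1) by directly unpacking Definition~\ref{defi:cuntz-generators}, and (2) by reducing $V$ to the simpler form $Up_1 + (1-p_1)$ via part (1) and then showing that $U$ is itself a partial isometry in $\toeplitz(P)$.

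For (1), I would first note that $S_{ii}$ has degree $\sigma(i)-\sigma(i)=0$ and is given by $S_{ii}(B) = Gr(P^m)*(E_{ii}\cdot B)$ for $B\in Arv(P)_m$. Since left multiplication by $E_{ii}$ merely extracts the $i$-th row of $B$ and $B$ is already supported in $E(P^m)$, the Schur product by $Gr(P^m)$ acts as the identity on $E_{ii}\cdot B$. Hence $S_{ii}(B)=E_{ii}\cdot B = p_i(B)$, i.e., $S_{ii}=p_i$, and these are pairwise orthogonal projections in $\ell^{\infty}(\Omega)\subseteq \toeplitz(P)$. The commutation $p_i U = U p_i$ then reduces to the entrywise identity
$$
(E_{ii}\cdot(Gr(P^{m+r})*B))_{kl} = \delta_{ki}Gr(P^{m+r})_{il}B_{il} = (Gr(P^{m+r})*(E_{ii}\cdot B))_{kl},
$$
which simply says that left multiplication by a diagonal matrix commutes with Schur multiplication.

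For (2), using (1) I rewrite $V = Up_1 + \sum_{i\geq 2}p_i = Up_1 + (1-p_1)$. The key step is to show that $U$ itself is a partial isometry in $\toeplitz(P)$, which is strictly stronger than the relations $U^*U - I, UU^* - I\in \JJ$ from Lemma~\ref{lemma:almost-universal}. I would verify this by passing to the faithful representation $\pi$ on $\bigoplus_{k\in\Omega}\FF_{P,k}$ and computing on the orthonormal basis from Notation~\ref{notation:state-invariant-fock}: a direct check using $U(B)=Gr(P^{n+r})*B$ shows $\pi(U)(e_{ik}^{(n)}) = e_{ik}^{(n+r)}$ when $(i,k)\in E(P^{n+r})$, and $0$ otherwise. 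Since $\pi(U)$ sends basis vectors to basis vectors or to zero, $U$ is a partial isometry. Then, since $p_1$ commutes with $U$ (and therefore with $U^*$), a short expansion gives $VV^* = p_1 UU^* + (1-p_1)$ and $V^*V = p_1 U^*U + (1-p_1)$, each a sum of two orthogonal projections (using that $p_1$ commutes with the projections $UU^*$ and $U^*U$). Hence $V$ is a partial isometry.

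To see that $V$ lifts $w$, I invoke the identification $\cuntz(P)\cong M_d(\cc)\otimes C(\torus)$ from Theorem~\ref{theorem:main}, under which $\overline{U}$ corresponds to the unitary $u=1\otimes z$ and $\overline{S_{ij}}$ to the matrix unit $e_{ij}\otimes 1$. Then $\overline{V}$ corresponds to $u e_{11} + \sum_{i\geq 2}e_{ii}$, whose value at $z\in\torus$ is $\mathrm{diag}(z,1,\dots,1) = w(z)$, as required. I expect the only subtle point to be the verification that $U$ is a partial isometry in $\toeplitz(P)$ itself (rather than only mod $\JJ$); once this is in hand via the explicit action on the Fock basis, everything else reduces to elementary algebraic manipulation using the commutation relations from part (1).
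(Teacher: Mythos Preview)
Your proposal is correct and follows essentially the same approach as the paper. For (1) both you and the paper unpack Definition~\ref{defi:cuntz-generators} directly; for (2) both arguments hinge on the fact that $U$ is a partial isometry and commutes with $p_1$, though the paper simply asserts ``since $U$ is a partial isometry'' and checks $VV^*V=V$, whereas you supply the missing verification of this fact via the action on the Fock basis and instead check that $VV^*$ and $V^*V$ are projections---an equivalent but slightly more explicit route.
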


\begin{proof}
(1): By definition, for any $m \in \nn$ and $E_{jk} \in Arv(P)_m$ we have that $S_{ii}(E_{jk}) = Gr(P^m) * (E_{ii} \cdot E_{jk}) = \delta_{i,j} E_{jk} = p_i(E_{jk})$ so that $S_{ii} = p_i$. Next, note that
$$
U S_{ii}(E_{jk}) = Gr(P^{m+r})*(\delta_{ij} E_{jk}) = \delta_{ij} U(E_{jk}) = S_{ii}U(E_{jk})
$$
So that $U$ and $S_{ii}$ commute on the dense subset of $\FF_{Arv(P)}$, and hence commute.

(2): It is clear that $w$ lifts to $V$ inside $\toeplitz(P)$ since under the identification $C(\mathbb{T}) \otimes M_d \cong C(\mathbb{T};M_d)$, the element $\overline{V}$ in the quotient is identified with $w$. Hence, we need only verify that $V$ is a partial isometry. Indeed, since by item (1), $U$ commutes with $S_{11}$, and since $U$ is a partial isometry, we have that
$$
VV^*V = S_{11}UU^*US_{11} + S_{22} + ... S_{dd} = V
$$
so that $V$ is also a partial isometry.
\end{proof}

Let $v$ be the image of $V$ under the C*-envelope quotient map $q_e: \toeplitz(P) \rightarrow \cenv(\tensor(P))$. By Lemma \ref{lemma:V-partial-isom} we know that $V$ is a partial isometry that lifts $w$, and hence $v$ is a partial isometry that lifts $w$. By item (ii) of \cite[Proposition 9.2.5]{RordamBook} we have that $1-v^*v$ and $1-vv^*$ are projections in $\oplus_{k \in \Omega_b} \KK(H_k) \cong q_e(\JJ(P))$ with
$$
\delta_1([w]_1) = [1 - v^*v]_0 - [1 - vv^*]_0
$$
But due to the identification $K_0(\oplus_{k\in \Omega_b} \KK(\FF_{P,k})) \cong \oplus_{k \in \Omega_b} K_0 (\KK(\FF_{P,k}))$, we obtain that
$$
\delta_1([w]_1) = [1-v^*v]_0 - [1 - vv^*]_0 = \Big( [(1-V^*V) |_{\FF_{P,k}}] - [(1 - VV^*) |_{\FF_{P,k}}]  \Big)_{k\in \Omega_b} =
$$
$$
\Big( \dim \Ker (V| _{\FF_{P,k}}) - \dim \Ker (V^*| _{\FF_{P,k}})  \Big)_{k\in \Omega_b} = \big( \ind(V |_{\FF_{P,k}}) \big)_{k\in \Omega_b}
$$
Where we are then left with computing the Fredholm indices of $V|_{\FF_{P,k}}$ for $k \in \Omega_b$.

\begin{prop} \label{prop:pi-index}
Let $P$ be an $r$-periodic irreducible stochastic matrix over $\Omega = \{1,...,d\}$. Suppose that $\Omega_0,...,\Omega_{r-1}$ is a properly enumerated cyclic decomposition such that $s \in \Omega$ is its first element, and let $(U, (S_{ij})_{i,j\in \Omega})$ be its associated standard family. Let $V_s: = S_{11} + ... + S_{s-1,s-1} + US_{ss} + S_{s+1,s+1} + ... + S_{dd}$. Then for every $k\in \Omega$ we have that $\ind(V_s|_{\FF_{P,k}}) = -1$.
\end{prop}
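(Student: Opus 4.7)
The plan is to decompose $\FF_{P,k}$ into $V_s$-invariant subspaces on which the action of $V_s$ is transparent, and then compute the index by comparison with a unilateral shift. Using $S_{ii} = p_i$ together with the fact (from Lemma~\ref{lemma:V-partial-isom}) that $U$ commutes with each $p_i$, we have $V_s = I - p_s + U p_s$. Since each $p_i$ acts diagonally, the subspaces $H_{\neq s} := \cspan\{e^{(n)}_{jk} : j \neq s,\ (j,k) \in E(P^n),\ n \in \nn \}$ and $H_{=s} := \cspan\{e^{(n)}_{sk} : n \in \mathcal{I}_{s,k}\}$ form an orthogonal decomposition of $\FF_{P,k}$ invariant under $V_s$, where $\mathcal{I}_{s,k} := \{n \in \nn : P^{(n)}_{sk} > 0\}$. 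The formulas for $p_s$ and $U$ on the basis $\{e^{(n)}_{jk}\}$ show that $V_s|_{H_{\neq s}} = I$, so $V_s|_{\FF_{P,k}}$ will be Fredholm if and only if $V_s|_{H_{=s}}$ is, and $\ind(V_s|_{\FF_{P,k}}) = \ind(V_s|_{H_{=s}})$.

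Next, the definition of $U$ gives $V_s e^{(n)}_{sk} = U e^{(n)}_{sk} = e^{(n+r)}_{sk}$ if $n + r \in \mathcal{I}_{s,k}$, and $0$ otherwise; so under the identification $H_{=s} \cong \ell^2(\mathcal{I}_{s,k})$ via $e^{(n)}_{sk} \mapsto \delta_n$, the operator $V_s|_{H_{=s}}$ is a ``partial shift by $r$''. The crucial fact is that $\mathcal{I}_{s,k}$ is \emph{cofinite} in the arithmetic progression $\mathcal{J}_{s,k} := \{n \in \nn : n \equiv \sigma(k)-\sigma(s) \pmod r\}$: Theorem~\ref{theorem:cyclic-graph-decomposition}(2) shows $\mathcal{I}_{s,k} \subseteq \mathcal{J}_{s,k}$, while Theorem~\ref{theorem:convergence-theorem-finite} ensures $P^{(n)}_{sk} \to \nu_k r > 0$ along $\mathcal{J}_{s,k}$, so $\mathcal{J}_{s,k} \setminus \mathcal{I}_{s,k}$ is finite.

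Finally, embed $H_{=s}$ isometrically into $K := \ell^2(\mathcal{J}_{s,k})$, and let $P$ denote the orthogonal projection of $K$ onto $H_{=s}$, so that $I - P$ has finite rank. The full shift $S : K \to K$ given by $S\delta_n = \delta_{n+r}$ is unitarily equivalent to the standard unilateral shift on $\ell^2(\nn)$, and hence is Fredholm with $\ind(S) = -1$. Since $S - PSP = (I-P)S + S(I-P) - (I-P)S(I-P)$ is finite rank, it follows that $PSP$ is Fredholm with $\ind(PSP) = -1$. As $PSP$ vanishes on $(I-P)K$ and restricts to $V_s|_{H_{=s}}$ on $PK$, the kernel and cokernel of $PSP$ each exceed those of $V_s|_{H_{=s}}$ by the same finite-dimensional space $(I-P)K$, yielding $\ind(V_s|_{H_{=s}}) = \ind(PSP) = -1$, as required. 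I expect the only mild obstacle to be the verification of the explicit action of $V_s$ on the basis $\{e^{(n)}_{jk}\}$ that underlies the invariant decomposition; once this is in place, the Fredholm computation is a standard comparison with the unilateral shift.
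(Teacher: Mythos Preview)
Your proof is correct, but it takes a genuinely different route from the paper's. The paper works fiber-by-fiber: setting $b_n$ to be the indicator of $P^{(nr+\ell)}_{sk}>0$ (with $\ell=\sigma(k)$), it computes $\dim\Ker V_s|_{Arv(P)_{n,k}} = b_n - b_{n+1}b_n$ and $\dim\Ker V_s^*|_{Arv(P)_{n,k}} = b_{n+1}-b_{n+1}b_n$ directly from the support constraints, handles the $n=0$ fiber separately, and then sums to obtain the index via the telescoping series $\sum_n(b_n-b_{n+1})$. Your approach instead isolates the single relevant ``row'' subspace $H_{=s}$ on which $V_s$ acts as a partial shift along the index set $\mathcal{I}_{s,k}$, observes via the convergence theorem that $\mathcal{I}_{s,k}$ is cofinite in an arithmetic progression, and then obtains the index by a finite-rank perturbation comparison with the unilateral shift. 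Your argument is more conceptual and avoids the explicit bookkeeping of kernel and cokernel dimensions at each level, while the paper's computation is more hands-on and makes the telescoping cancellation visible.

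Two minor points. First, your citation of Theorem~\ref{theorem:cyclic-graph-decomposition}(2) for the containment $\mathcal{I}_{s,k}\subseteq\mathcal{J}_{s,k}$ should really point to item~(1) (or the definition of the cyclic decomposition): the fact that $P^{(n)}_{sk}>0$ forces $n\equiv\sigma(k)-\sigma(s)\pmod r$ is the partition property, not the eventual-positivity statement. Second, in your last paragraph you reuse the letter $P$ for the projection onto $H_{=s}$, which clashes with the stochastic matrix; a different symbol would be cleaner.
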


\begin{proof}
Up to conjugating $P$ with a permutation matrix, we may assume that $s=1$ is the first element.
For each state $k\in \Omega$, let $\ell = \sigma(k) = \sigma(k) - \sigma(1)$, and denote by
$$
b_n = \begin{cases} 
      1 & \text{: } P_{1k}^{(nr+\ell)} > 0 \\
      0 & \text{: } P_{1k}^{(nr+\ell)} = 0
   \end{cases}
$$
where $P^{(0)}_{1k} = 1$ if $k=1$, and is $0$ otherwise. Recall Notation \ref{notation:state-invariant-fock}. Since $\FF_{P,k} = \oplus_{n=0}^{\infty}Arv(P)_{n,k}$, and since $V$ shifts only the first rows of the matrix $A$ in an element $A \otimes e_k \in Arv(P)_{n,k}$, we have for all $n\in \nn$ that
$$
\dim \Ker V|_{Arv(P)_{n,k}} = b_n - b_{n+1}b_n 
$$
and for all $n\geq 1$ that
$$
\dim\Ker V^*|_{Arv(P)_{n,k}} = b_{n+1} - b_{n+1}b_n
$$
due to the support of elements in $Arv(P)_{n,k}$. Note also that for $n=0$, and we get
$$
\dim \Ker V^*|_{Arv(P)_{0,k}} = \begin{cases} 
0 & \text{: } k \neq 1 \\
1 & \text{: } k = 1
\end{cases}
$$
Hence, if we sum up dimensions, we obtain that
\begin{equation*}
\dim \Ker V|_{\FF_{P,k}} = \sum_{n=0}^{\infty} b_n - b_{n+1}b_n
\end{equation*}
and
\begin{equation*}
\dim \Ker V^*|_{\FF_{P,k}} = \begin{cases} 
\sum_{n=0}^{\infty} b_{n+1} - b_{n+1}b_n & \text{: } k \neq 1 \\
1+ \sum_{n=0}^{\infty} b_{n+1} - b_{n+1}b_n & \text{: } k = 1
\end{cases}
\end{equation*}
Thus, 
\begin{equation*}
\ind(V|_{\FF_{P,k}}) = \dim \Ker V|_{\FF_{P,k}} - \dim \Ker V^*|_{\FF_{P,k}}
\end{equation*}

\begin{equation*}
=\begin{cases} 
\sum_{n=0}^{\infty} b_n - b_{n+1} & \text{ if } k \neq 1 \\
-1 + \sum_{n=0}^{\infty} b_n - b_{n+1} & \text{ if } k = 1
\end{cases}
\ \ = \ \ 
\begin{cases} 
0 - 1 & \text{ if } k \neq 1 \\
-1 + 1 - 1 & \text{ if } k = 1
\end{cases}
\end{equation*}
Hence, we see that in any case, $\ind(V|_{\FF_{P,k}}) = -1$, as required.
\end{proof}

\begin{cor} \label{cor:delta-computation}
Let $P$ be an irreducible stochastic matrix over finite $\Omega$. Then the index map $\delta_1 : \mathbb{Z} \rightarrow \mathbb{Z}^{|\Omega_b|}$ is given by $\delta_1(n) = -(n,...,n)$
\end{cor}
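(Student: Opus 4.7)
The proof assembles the ingredients developed immediately before the statement. Since $\delta_1 \colon K_1(C(\torus, M_d)) \to K_0\!\left(\bigoplus_{k\in \Omega_b}\KK(\FF_{P,k})\right)$ is a group homomorphism and $K_1(C(\torus, M_d)) \cong \mathbb{Z}$ is generated by $[w]_1$ with $w = z\mapsto \mathrm{diag}(z,1,\ldots,1)$, it suffices to show that $\delta_1([w]_1) = -(1,\ldots,1)$, and then invoke linearity.

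By Lemma \ref{lemma:V-partial-isom}, $w$ admits a partial isometry lift $V = US_{11} + S_{22} + \cdots + S_{dd}$ in $\toeplitz(P)$, and its image $v := q_e(V)$ in $\cenv(\tensor(P))$ is again a partial isometry lifting $w$. The standard index formula for $\delta_1$ on partial isometry lifts from \cite[Proposition 9.2.5]{RordamBook} then gives $\delta_1([w]_1) = [1-v^*v]_0 - [1-vv^*]_0$. Under the identification $q_e(\JJ(P)) \cong \bigoplus_{k\in \Omega_b}\KK(\FF_{P,k})$ and the resulting coordinate-wise splitting of $K_0$, this expression becomes the tuple of Fredholm indices $\bigl(\ind(V|_{\FF_{P,k}})\bigr)_{k\in \Omega_b}$, exactly as already spelled out in the discussion preceding Proposition \ref{prop:pi-index}.

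Finally, we apply Proposition \ref{prop:pi-index} with $s=1$, where $1$ is the first element of $\Omega$ with respect to a properly enumerated cyclic decomposition. That proposition yields $\ind(V|_{\FF_{P,k}}) = -1$ for every $k\in \Omega$, and in particular for every $k \in \Omega_b$. Therefore $\delta_1([w]_1) = (-1,\ldots,-1) = -(1,\ldots,1)$, and by the homomorphism property we conclude $\delta_1(n) = -(n,\ldots,n)$ for all $n\in\mathbb{Z}$.

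The genuine technical work (the explicit Fredholm index computation on each reducing subspace $\FF_{P,k}$) has already been carried out in Proposition \ref{prop:pi-index} via the careful bookkeeping of the indicator sequence $(b_n)$ encoding residues of path lengths; so no new obstacle arises at this stage. The corollary is purely a matter of collecting the index formula for $\delta_1$, the lift produced in Lemma \ref{lemma:V-partial-isom}, and the index computation of Proposition \ref{prop:pi-index}, and then transporting the result along the coordinate-wise decomposition of $K_0\!\left(\bigoplus_{k\in \Omega_b}\KK(\FF_{P,k})\right)$.
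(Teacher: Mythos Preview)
Your proposal is correct and follows essentially the same approach as the paper: the paper places the entire argument in the discussion between Lemma~\ref{lemma:V-partial-isom} and Proposition~\ref{prop:pi-index} (partial isometry lift, the formula from \cite[Proposition 9.2.5]{RordamBook}, coordinate-wise decomposition of $K_0$, and the index computation of Proposition~\ref{prop:pi-index}), leaving the corollary itself without a separate proof. Your write-up simply makes explicit the final step of combining these ingredients and invoking that $\delta_1$ is a homomorphism.
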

We then obtain the K-theory of $\cenv(\tensor(P))$ in terms of $|\Omega_b|$.

\begin{theorem} \label{theorem:k-theory-computation}
Let $P$ be a finite irreducible stochastic matrix over $\Omega$. Then
\begin{enumerate}
\item
If $P$ has a non-exclusive state then
$$
K_0(\cenv(\tensor(P))) \cong
\mathbb{Z}^{|\Omega_b|} \ \text{ and } \ 
K_1(\cenv(\tensor(P))) \cong
\{0\}
$$
\item
If all states of $P$ are exclusive then
$$
K_0(\cenv(\tensor(P))) \cong
\mathbb{Z} \ \text{ and } \ 
K_1(\cenv(\tensor(P))) \cong
\mathbb{Z}
$$
\end{enumerate}

\end{theorem}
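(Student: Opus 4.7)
The plan is to feed Corollary~\ref{cor:delta-computation} into the six-term exact sequence \eqref{eq:six-term-exact} and read off the K-groups in each of the two cases. I would split the argument according to whether $\Omega_b = \emptyset$ or $\Omega_b \neq \emptyset$, using the fact established in Proposition~\ref{prop:boundary-rep-exculsive} (item (2)) that exclusive states never produce boundary representations.

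For case (1), assume $P$ has a non-exclusive state, so that $|\Omega_b|\geq 1$. By Corollary~\ref{cor:delta-computation}, $\delta_1: \mathbb{Z} \to \mathbb{Z}^{|\Omega_b|}$ is the diagonal embedding $n \mapsto -(n,\ldots,n)$, which is injective. Exactness at the upper-right corner of \eqref{eq:six-term-exact} then forces $K_1(\cenv(\tensor(P))) = 0$. The image of $\delta_1$ is a rank-one direct summand of $\mathbb{Z}^{|\Omega_b|}$ (split off, e.g., by $(m_1,\ldots,m_{|\Omega_b|})\mapsto -m_1$), so $\operatorname{coker}(\delta_1) \cong \mathbb{Z}^{|\Omega_b|-1}$. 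The bottom row of \eqref{eq:six-term-exact} thus yields a short exact sequence
$$
0 \longrightarrow \mathbb{Z}^{|\Omega_b|-1} \longrightarrow K_0(\cenv(\tensor(P))) \longrightarrow \mathbb{Z} \longrightarrow 0,
$$
which splits since $\mathbb{Z}$ is free abelian, giving $K_0(\cenv(\tensor(P))) \cong \mathbb{Z}^{|\Omega_b|}$.

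For case (2), assume every state is exclusive. By Proposition~\ref{prop:boundary-rep-exculsive}(2), $\Omega_b = \emptyset$, so the ideal appearing in \eqref{eq:cenv-short-exact} is trivial and $\cenv(\tensor(P)) \cong C(\mathbb{T}, M_d) \cong \cuntz(P)$ (this also matches the chain of equivalences in Corollary~\ref{cor:homomorphism-cuntz}). Standard K-theory for $C(\mathbb{T}, M_d)$ then gives $K_0 \cong K_1 \cong \mathbb{Z}$, as required.

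The only substantive point is the splitting and cokernel computation in case (1); everything else is a mechanical application of results already in hand. There is no real obstacle here — the entire conceptual work has been done in Proposition~\ref{prop:pi-index} and Corollary~\ref{cor:delta-computation}, which determine $\delta_1$ precisely.
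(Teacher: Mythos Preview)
Your proof is correct and follows essentially the same approach as the paper's: both feed Corollary~\ref{cor:delta-computation} into the six-term sequence \eqref{eq:six-term-exact}, use injectivity of $\delta_1$ to kill $K_1$, compute $\operatorname{coker}(\delta_1)\cong\mathbb{Z}^{|\Omega_b|-1}$, and then split the resulting short exact sequence against $\mathbb{Z}$, while case~(2) is dispatched via Corollary~\ref{cor:homomorphism-cuntz}. If anything, your version is slightly more explicit in naming the retraction $(m_1,\dots,m_{|\Omega_b|})\mapsto -m_1$ and in invoking freeness of $\mathbb{Z}$ for the splitting, which the paper leaves implicit.
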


\begin{proof}
If all states of $P$ are exclusive, then by Corollary \ref{cor:homomorphism-cuntz} we have that $\cenv(\tensor(P)) \cong C(\mathbb{T},M_d)$ so that the $K_0$ and $K_1$ groups of $\cenv(\tensor(P))$ must both be $\mathbb{Z}$.

Next, if $P$ has a non-exclusive state, since $\delta_1$ is injective, by exactness at $K_1(C(\mathbb{T},M_d))$ in the six-term exact sequence \eqref{eq:six-term-exact}, we see that $K_1(\cenv(\tensor(P))) = \{0\}$.

Since $\delta_1(1) = (-1,...,-1)$, we see that the six-term exact sequence \eqref{eq:six-term-exact} can be reduced to the single exact sequence
$$
0 \leftarrow \mathbb{Z} \leftarrow K_0(\cenv(\tensor(P))) \leftarrow \mathbb{Z}^{|\Omega_b|} / Sp_{\mathbb{Z}} \big( (-1,...,-1) \big) \leftarrow 0
$$
Since $\mathbb{Z}^{|\Omega_b|} / Sp_{\mathbb{Z}} \big( (-1,...,-1) \big) \cong \mathbb{Z}^{|\Omega_b| -1}$, we see that $K_0(\cenv(\tensor(P))) \cong \mathbb{Z}^{|\Omega_b|}$, and the proof is complete.
\end{proof}

\begin{cor}
Let $P$ be a finite irreducible stochastic matrix over $\Omega$. Then conditions $(1)$ through $(4)$ of Corollary \ref{cor:homomorphism-cuntz} are all equivalent to $K_1(\cenv(\tensor(P))) \cong \mathbb{Z}$.
\end{cor}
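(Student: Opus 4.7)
The plan is to deduce this corollary directly from the preceding Theorem \ref{theorem:k-theory-computation} together with Corollary \ref{cor:homomorphism-cuntz}, with essentially no further work required.

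First, I would observe that Corollary \ref{cor:homomorphism-cuntz} already establishes that conditions (1)--(4) are pairwise equivalent, so it suffices to prove that any one of them is equivalent to $K_1(\cenv(\tensor(P))) \cong \mathbb{Z}$. The most convenient choice is condition (1), namely that every state $k \in \Omega$ is exclusive, because this is precisely the dichotomy on which Theorem \ref{theorem:k-theory-computation} is split.

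Next, I would argue both implications from Theorem \ref{theorem:k-theory-computation}. For the forward direction, if condition (1) holds, then the second item of Theorem \ref{theorem:k-theory-computation} gives $K_1(\cenv(\tensor(P))) \cong \mathbb{Z}$ immediately. For the converse, suppose $K_1(\cenv(\tensor(P))) \cong \mathbb{Z}$. If $P$ had a non-exclusive state, then the first item of Theorem \ref{theorem:k-theory-computation} would force $K_1(\cenv(\tensor(P))) \cong \{0\}$, contradicting the assumption. Hence every state is exclusive, establishing condition (1).

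There is no genuine obstacle in this argument, since all the heavy lifting has already been carried out in Corollary \ref{cor:homomorphism-cuntz} (to link the four algebraic/dynamical conditions) and in Theorem \ref{theorem:k-theory-computation} (to compute the $K_1$-group in each case). The one minor subtlety worth mentioning explicitly is that the two cases of Theorem \ref{theorem:k-theory-computation} are genuinely exhaustive and mutually exclusive, so the value of $K_1$ detects exactly the presence or absence of a non-exclusive state, making the equivalence with condition (1) an actual biconditional rather than only one direction. Combining this with the equivalence chain from Corollary \ref{cor:homomorphism-cuntz} completes the proof.
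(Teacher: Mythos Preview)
Your proposal is correct and follows exactly the intended approach: the paper states this corollary without proof immediately after Theorem~\ref{theorem:k-theory-computation}, as it is an immediate consequence of the dichotomy there together with the equivalences already established in Corollary~\ref{cor:homomorphism-cuntz}.
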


\section{Classification of C*-envelope} \label{Sec:Classification}

We are now in a position to apply the theory in the previous sections to obtain classification results up to *-isomorphism and stable isomorphisms of C*-envelopes arising from finite irreducible stochastic matrices.

For every finite irreducible stochastic matrix $P$ over $\Omega^P$, which has at least one non-exclusive state, let $\Omega_b^P$ be the (non-empty) set of indices $k\in \Omega^P$ such that $\pi_k: \toeplitz(P) \rightarrow B(\FF_{P,k})$ is a boundary representation for $\tensor(P)$. We note that $\cenv(\tensor(P))$ is Type I (equivalently GCR), being an extension of a CCR algebra by a CCR algebra. Thus, we may identify an irreducible representation with its kernel when discussing elements of the primitive ideal spectrum of $\cenv(\tensor(P))$.

The analysis done around the exact sequence \eqref{eq:cenv-short-exact} shows that the spectrum of $\cenv(\tensor(P))$ as a set is comprised of $|\Omega^P_b|$ irreducible representations of infinite dimensions induced from $\pi_k$, which we still denote by $\pi_k : \cenv(\tensor(P)) \rightarrow B(\FF_{P,k})$ for $k\in \Omega^P_b$, and a torus $\mathbb{T}$ of irreducible representations of dimension $|\Omega^P|$ given by $ev_{\lambda} \circ q$ for every $\lambda \in \mathbb{T}$, where $q : \cenv(\tensor(P)) \rightarrow C(\mathbb{T},M_{|\Omega^P|})$ is the quotient map. Moreover, we have the exact sequence
\begin{equation*} \label{eq:weak-equiv-PQ}
0 \longrightarrow \oplus_{k \in \Omega^P_b}\KK(\FF_{P,k}) \overset{\iota}{\longrightarrow} \cenv(\tensor(P)) \overset{q}{\longrightarrow} C(\mathbb{T},M_{|\Omega_P|}) \longrightarrow 0
\end{equation*} 

Since $\Ker \pi_k \subseteq \Ker (ev_{\lambda} \circ q)$ for every $k\in \Omega^P_b$ and every $\lambda \in \mathbb{T}$, and $\Ker (ev_{\lambda} \circ q)$ is not a subset of any $\Ker (ev_{\lambda'} \circ q)$ for $\lambda' \neq \lambda \in \mathbb{T}$, we see that for every $\lambda \in \mathbb{T}$, each $\Ker (ev_{\lambda} \circ q)$ is a maximal element in the lattice $\Prim(\cenv(\tensor(P)))$.

\begin{notation}
For a finite irreducible stochastic matrix $P$, we denote from now on $K_P: = \oplus_{k \in \Omega^P_b}\KK(\FF_{P,k})$, $B_P:= C(\mathbb{T},M_{|\Omega_P|})$ and $A_P:=\cenv(\tensor(P))$.
\end{notation}

Let $P$ and $Q$ be irreducible stochastic matrices over finite sets $\Omega^P$ and $\Omega^Q$ respectively. Then we have the following exact sequences
\begin{equation} \label{eq:exact-seq-PQ}
0 \rightarrow K_P \rightarrow A_P \rightarrow B_P \rightarrow 0
\ \ \text{and} \ \ 
0 \rightarrow K_Q \rightarrow A_Q \rightarrow B_Q \rightarrow 0 
\end{equation}
with Busby invariants $\eta_P$ and $\eta_Q$, and the stabilized exact sequences
\begin{equation} \label{eq:exact-seq-PQ-stab}
0 \rightarrow K_P \otimes \KK \rightarrow A_P \otimes \KK \rightarrow B_P \otimes \KK \rightarrow 0
\ \ \text{and} \ \ 
0 \rightarrow K_Q \otimes \KK \rightarrow A_Q \otimes \KK \rightarrow B_Q \otimes \KK \rightarrow 0 
\end{equation}
with Busby invariants $\eta_P^{(\infty)}$ and $\eta_Q^{(\infty)}$ given by $\eta_P^{(\infty)}([T_{ij}]) = [\eta_P(T_{ij})]$ for $[T_{ij}] \in B_P \otimes \KK$.

For $k\in \Omega_b$ denote by $\rho_k: \oplus_{\ell \in \Omega_b^P} B(\FF_{P,\ell}) \rightarrow B(\FF_{P,k})$ the restriction map, which then promotes to a restriction $\widetilde{\rho}_k : \oplus_{\ell \in \Omega_b^P} \QQ(\FF_{P,\ell}) \rightarrow \QQ(\FF_{P,k})$.

\begin{prop} \label{prop:C-env-char-ext}
Let $P$ and $Q$ be finite irreducible stochastic matrices over $\Omega^P$ and $\Omega^Q$ respectively. 

\begin{enumerate}
\item

$\cenv(\tensor(P))$ and $\cenv(\tensor(Q))$ are *-isomorphic if and only if there exists a *-isomorphism $\beta : C(\mathbb{T},M_{|\Omega_P|}) \rightarrow C(\mathbb{T},M_{|\Omega_Q|})$ and a bijection $\tau : \Omega_b^P \rightarrow \Omega_b^Q$ such that for all $k\in \Omega_b^P$ the extensions $\widetilde{\rho}_k \eta_P$ and $\widetilde{\rho}_{\tau(k)} \eta_Q \beta$ are strongly equivalent. 

\item
$\cenv(\tensor(P))$ and $\cenv(\tensor(Q))$ are stably isomorphic if and only if there exists a *-isomorphism $\beta : C(\mathbb{T},M_{|\Omega_P|}) \otimes \KK \rightarrow C(\mathbb{T},M_{|\Omega_Q|}) \otimes \KK$ and a bijection $\tau : \Omega_b^P \rightarrow \Omega_b^Q$ such that for all $k\in \Omega_b^P$ the extensions $\widetilde{\rho}_k^{(\infty)} \eta^{(\infty)}_P$ and $\widetilde{\rho}_{\tau(k)}^{(\infty)} \eta^{(\infty)}_Q \beta$ are weakly equivalent. 

\end{enumerate}
\end{prop}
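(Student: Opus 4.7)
The plan is to apply the description of isomorphism of short exact sequences via the Busby invariant, namely \cite[Theorem~2.2]{ELP-morphisms} recalled in Section~\ref{Sec:Prelim}. The key preliminary observation is that $K_P$ is canonically determined by $A_P$: it is the intersection of the kernels of all finite-dimensional irreducible representations of $A_P$ (equivalently, its primitive ideal spectrum is exactly the set of isolated points of $\Prim(A_P)$). In particular $K_P$ is preserved by any $*$-isomorphism, and the same holds for $K_P \otimes \KK$ inside $A_P \otimes \KK$ via the homeomorphism $\Prim(A_P \otimes \KK) \cong \Prim(A_P)$ given by $J \otimes \KK \leftrightarrow J$.

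For the forward direction of (1), given a $*$-isomorphism $\alpha: A_P \to A_Q$, canonicity forces $\alpha(K_P) = K_Q$, producing a restriction $\kappa := \alpha|_{K_P}: K_P \to K_Q$ and an induced $*$-isomorphism $\beta: B_P \to B_Q$ on the quotients satisfying $\widetilde{\kappa}\eta_P = \eta_Q \beta$ by \cite[Theorem~2.2]{ELP-morphisms}. Since $K_P$ and $K_Q$ are finite direct sums of the simple ideals $\KK(\FF_{P,k})$ and $\KK(\FF_{Q,k'})$, the $*$-isomorphism $\kappa$ must permute these summands, yielding a bijection $\tau: \Omega_b^P \to \Omega_b^Q$ together with unitaries $U_k: \FF_{P,k} \to \FF_{Q,\tau(k)}$ such that $\kappa|_{\KK(\FF_{P,k})} = \Ad_{U_k}$. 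Under the identifications $\QQ(K_P) = \bigoplus_{j \in \Omega_b^P} \QQ(\FF_{P,j})$ and similarly for $K_Q$, we find $\widetilde{\rho}_{\tau(k)} \circ \widetilde{\kappa} = \Ad_{q(U_k)} \circ \widetilde{\rho}_k$. Composing both sides of $\widetilde{\kappa}\eta_P = \eta_Q \beta$ with $\widetilde{\rho}_{\tau(k)}$ therefore yields the claimed strong equivalence of $\widetilde{\rho}_k \eta_P$ and $\widetilde{\rho}_{\tau(k)} \eta_Q \beta$, implemented by $U_k$.

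For the backward direction, given $\beta$, $\tau$, and unitaries $U_k: \FF_{P,k} \to \FF_{Q,\tau(k)}$ implementing the strong equivalences, I would define $\kappa: K_P \to K_Q$ summand-wise by $\Ad_{U_k}$. A direct verification, reversing the identity $\widetilde{\rho}_{\tau(k)} \circ \widetilde{\kappa} = \Ad_{q(U_k)} \circ \widetilde{\rho}_k$ on each component, shows that $\widetilde{\kappa}\eta_P = \eta_Q \beta$, so another application of \cite[Theorem~2.2]{ELP-morphisms} produces an isomorphism of the two exact sequences, and in particular $A_P \cong A_Q$.

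The stable case (2) follows from the same strategy applied to the stabilized sequences in \eqref{eq:exact-seq-PQ-stab}. The crucial point that makes \emph{weak} equivalence the appropriate notion is \cite[Proposition~15.6.4]{Blackadar-K-theory}: since $B_P \otimes \KK$ is non-unital, $\Ext_s$ and $\Ext_w$ coincide on it, so weak equivalence of the stabilized extensions lifts to strong equivalence, which in turn produces unitaries $U_k: \FF_{P,k} \otimes \ell^2 \to \FF_{Q,\tau(k)} \otimes \ell^2$ from which the summand-wise isomorphism $\kappa: K_P \otimes \KK \to K_Q \otimes \KK$ is assembled exactly as above. The main obstacle I anticipate is the careful bookkeeping of identifications between the various Calkin algebras $\QQ(\FF_{P,k})$ and $\QQ(\FF_{Q,\tau(k)})$ induced by the unitaries $U_k$, particularly in verifying that the component restriction maps $\widetilde{\rho}_{\tau(k)} \circ \widetilde{\kappa}$ factor as claimed in terms of $\widetilde{\rho}_k$.
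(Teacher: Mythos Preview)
Your proposal is correct and follows essentially the same route as the paper: both arguments hinge on showing that $K_P$ is intrinsically determined by $A_P$ (the paper via maximality of $\Ker(ev_\lambda \circ q)$ in $\Prim(A_P)$, you via finite-dimensionality of the corresponding irreducibles---these are equivalent here), then invoke \cite[Theorem~2.2]{ELP-morphisms} and decompose $\kappa$ summand-wise as $\Ad_{U_k}$'s to pass between $\widetilde{\kappa}\eta_P = \eta_Q\beta$ and the componentwise strong equivalences. The stable case is handled identically in both, with the weak/strong coincidence for non-unital extensions supplying the final reduction.
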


\begin{proof}
We first show (1). Suppose that $\alpha: A_P \rightarrow A_Q$ is a *-isomorphism. Let $\alpha_* : \Prim(A_P) \rightarrow \Prim(A_Q)$ be the induced lattice isomorphisms between the spectra. Since $\alpha_*$ must send maximal elements to maximal elements, we see that for $\lambda \in \mathbb{T}$ we have that $\alpha_*$ sends $\Ker (ev^P_{\lambda} \circ q)$ to $ \Ker (ev^Q_{\lambda'} \circ q) $ for some $\lambda' \in \mathbb{T}$ in bijection. In particular, since $K_P = \cap_{\lambda \in \mathbb{T}} \Ker (ev^P_{\lambda} \circ q)$ and $K_Q = \cap_{\lambda \in \mathbb{T}} \Ker (ev^Q_{\lambda} \circ q)$, we see that $\alpha(K_P) = K_Q$. Hence $\cenv(\tensor(P))$ and $\cenv(\tensor(Q))$ are *-isomorphic if and only if the exact sequences of \eqref{eq:exact-seq-PQ} are isomorphic, which happens if and only if the restriction $\kappa : = \alpha|_{K_P} : K_P \rightarrow K_Q$ and the induced map $\beta$ satisfy $\widetilde{\kappa}\eta_P = \eta_Q \beta$, where $\beta : B_P \rightarrow B_Q$ is the induced *-isomorphism from $\alpha$ between the quotients by $K_P$ and by $K_Q$. 

So suppose $\widetilde{\kappa}\eta_P = \eta_Q \beta$ for $\kappa$ and $\beta$ as above. Since $\kappa: \oplus_{k\in \Omega_b^P} \KK(\FF_{P,k}) \rightarrow \oplus_{k\in \Omega_b^Q} \KK(\FF_{Q,k})$, there is a bijection $\tau : \Omega_b^P \rightarrow \Omega_b^Q$ and a unitaries $U_k : \FF_{P,k} \rightarrow \FF_{Q,\tau(k)}$ such that $\kappa|_{\KK(\FF_{P,k})} = Ad_{U_k} : \KK(\FF_{P,k}) \rightarrow \KK(\FF_{Q,\tau(k)})$, so that 
$$
\widetilde{\rho}_{\tau(k)}\eta_Q \beta = \widetilde{\rho}_{\tau(k)} \widetilde{\kappa} \eta_P = \widetilde{Ad_{U_k}} \widetilde{\rho}_k \eta_P
$$
For the converse, if $U_k$ are unitaries implementing the strong conjugacy between $\widetilde{\rho}_{\tau(k)}\eta_Q \beta$ and $\widetilde{Ad_{U_k}} \widetilde{\rho}_k \eta_P$, by setting $\kappa = \oplus_{k\in \Omega_P^b} Ad_{U_k} : \oplus_{k\in \Omega_b^P} \KK(\FF_{P,k}) \rightarrow \oplus_{k\in \Omega_b^Q} \KK(\FF_{Q,k})$, we have that
$$
\widetilde{\kappa}\eta_P = \oplus_{k\in \Omega_b^P} Ad_{U_k} \widetilde{\rho}_k \eta_P = \oplus_{k\in \Omega_b^Q} \widetilde{\rho}_{\tau(k)} \eta_Q \beta = \eta_Q \beta
$$

Next, we show (2). Since stabilizing an algebra does not change its primitive ideal spectrum, the same argument as used in (1) shows that $\cenv(\tensor(P))$ and $\cenv(\tensor(Q))$ are stably isomorphic if and only if the exact sequences in \eqref{eq:exact-seq-PQ-stab} are isomorphic, which happens if and only if there are *-isomorphisms $\kappa: K_P \otimes \KK \rightarrow K_Q \otimes \KK$ and $\beta : B_P \otimes \KK \rightarrow B_Q \otimes \KK$ such that $\widetilde{\kappa} \eta_P^{(\infty)} = \eta^{(\infty)}_Q \beta$. Then a similar argument to the one used for item (1) shows that this happens if and only if there is a bijection $\tau : \Omega_b^P \rightarrow \Omega_b^Q$ such that for all $k\in \Omega_b^P$ the extensions $\widetilde{\rho}_k^{(\infty)} \eta_P^{(\infty)}$ and $\widetilde{\rho}_{\tau(k)}^{(\infty)} \eta_Q^{(\infty)} \beta$ are \emph{strongly} equivalent. Since these are non-unital extensions, this happens if and only if they are weakly equivalent.
\end{proof}

For an irreducible finite stochastic matrix $P$ over $\Omega^P$ with period $r_P$, and $k\in \Omega^P$. Let $\Omega_0,...,\Omega_{r_P-1}$ be a cyclic decomposition for $P$. Then there exists $m_0$, such that for all $m \geq m_0$ we have 
$$
|\Omega_{\sigma(k) -m}| = | \{ \ i\in \Omega_{\sigma(k) -m} \ | \ P^{(m)}_{ik} > 0 \ \}|
$$
Indeed, fix $0 \leq \ell \leq r_P-1$. By item (2) of Theorem \ref{theorem:cyclic-graph-decomposition} there is $n^{(\ell)}_0$ such that for all $n\geq n^{(\ell)}_0$ we have that $P^{(nr_P + \ell)}_{ij}>0$ for $i,j\in \Omega^P$ with $\sigma(i) - \sigma(j) = \ell$. Hence, if we fix $j=k$, we see that 
$$
|\Omega_{\sigma(k) -(nr_P + \ell)}| = | \{ \ i\in \Omega_{\sigma(k) -(nr_P + \ell)} \ | \ P^{(nr_P + \ell)}_{ik} > 0 \ \}|
$$
Then simply take $m_0 = \max_{\ell}\{ n_0^{(\ell)}r_P+\ell \}$ to obtain the desired claim above.

\begin{defi} \label{def:column-nullity}
Let $P$ be an $r$-periodic finite irreducible stochastic matrix over $\Omega$ of size $d$, and $k\in \Omega$. Let $\Omega_0,..., \Omega_{r-1}$ be a cyclic decomposition for $P$, so that $\sigma(k)$ is the unique index such that $k \in \Omega_{\sigma(k)}$. We define the \emph{$k$-th column nullity} of $P$ to be
$$
\mathcal{N}_P(k) = \sum_{m=1}^{\infty} | \{ \ i\in \Omega_{\sigma(k) -m} \ | \ P^{(m)}_{ik} = 0 \ \}|
$$
where $\sigma(k) - m$ is taken as an element in the cyclic group $\mathbb{Z}_r$ of order $r$. We say that $k\in \Omega$ is a fully supported column if $\mathcal{N}_P(k) = 0$.
\end{defi}

Put in other words, the column nullity of a state $k \in \Omega$ is the number of zeros in all $k$-th columns of iterations of $P$, that lie in the support of a cyclic decomposition for $P$.

The above infinite sum is in fact always finite by the discussion preceding Definition \ref{def:column-nullity} and is hence convergent.

For a finite irreducible stochastic matrix $P$, we find the element in $\Ext_s(C(\mathbb{T}) \otimes M_d)$ representing each extension $\eta_{P,k}:= \widetilde{\rho}_k \eta_P$, for each $k\in \Omega_b^P$, appearing in Proposition \ref{prop:C-env-char-ext}. Note that the exact sequence corresponding to the extension $\eta_{P,k}$ is
$$
0 \rightarrow \KK(\FF_{P,k}) \rightarrow \pi_k(\toeplitz(P)) \rightarrow C(\mathbb{T}, M_{|\Omega^P|}) \rightarrow 0
$$
Recall the computation of $\Ext_s(C(\mathbb{T}) \otimes M_d)$ and $\Ext_w(C(\mathbb{T}) \otimes M_d)$ preceding Proposition \ref{prop:automorphism-invert}.

\begin{prop} \label{prop:defect-computation}
Let $P$ be a finite irreducible stochastic matrix over $\Omega^P$ with period $r_P$, and let $\Omega_0,...,\Omega_{r_P -1}$ be a properly enumerated cyclic decomposition for $P$. Then for each $k \in \Omega_b^P$, there exists $n_0$ large enough so that for all $n\geq n_0$ we have that $[j_*\eta_{P,k}]_s$ is identified with $0 \leq s < |\Omega^P|$ given by
$$
s \equiv \sum_{m=0}^{nr_P-1}|\{ \ i \in \Omega_{\sigma(k) - m} \ | \ P_{ik}^{(m)} > 0 \} | \mod |\Omega^P|
$$
and $[\iota_*\eta_{P,k}]_s$ is identified with $-|\Omega^P|$. In particular, $[\eta_{P,k}]_w = -1$.
\end{prop}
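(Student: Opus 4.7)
My plan is to compute each of the three identifications in turn using the explicit form of the extension $\eta_{P,k}$ on the Fock space $\FF_{P,k}$. Under the isomorphism $\cuntz(P)\cong M_d(\mathbb{C})\otimes C(\torus)$ of Theorem~\ref{theorem:main}, $z\otimes I_d$ corresponds to $\overline{U}$ and $e_{ii}\otimes 1$ to $\overline{p_i}=\overline{S_{ii}}$, so $\iota_*\eta_{P,k}$ is determined by the class of $\pi_k(U)$ modulo $\KK(\FF_{P,k})$ and $j_*\eta_{P,k}$ by the classes of the $\pi_k(S_{ij})$. By Example~\ref{ex:torus-extensions}, the invariant $[\iota_*\eta_{P,k}]_s\in\Ext_s(C(\torus))\cong\mathbb{Z}$ is the Fredholm index of $\pi_k(U)$ mod compacts, and I would compute it level-by-level using $\FF_{P,k}=\bigoplus_m Arv(P)_{m,k}$: the operator $\pi_k(U)$ sends a basis vector $e_{i,k}^{(m)}$ to $e_{i,k}^{(m+r)}$ when $(i,k)\in E(P^{m+r})$ and to zero otherwise. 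By the cyclic decomposition (Theorem~\ref{theorem:cyclic-graph-decomposition}), the conditions $(i,k)\in E(P^m)$ and $(i,k)\in E(P^{m+r})$ coincide for all sufficiently large $m$, so only finitely many levels contribute to kernel or cokernel. Writing $N_\ell^k:=\{i:(i,k)\in E(P^\ell)\}$, a telescoping sum then yields
\begin{equation*}
\ind \pi_k(U)=\sum_{m\geq 0}\bigl(|N_m^k|-|N_{m+r}^k|\bigr)-\sum_{\ell=0}^{r-1}|N_\ell^k|=\sum_{\ell=0}^{r-1}|N_\ell^k|-|\Omega^P|-\sum_{\ell=0}^{r-1}|N_\ell^k|=-|\Omega^P|.
\end{equation*}

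For $[j_*\eta_{P,k}]_s\in\Ext_s(M_d)\cong\mathbb{Z}_d$, I would follow the construction in Example~\ref{ex:matrix-extension}: the natural lifts are the mutually orthogonal projections $p_{ii}:=\pi_k(p_i)=\pi_k(S_{ii})$ summing to the identity on $\FF_{P,k}$, and the partial isometries $e_{1i}:=\pi_k(S_{1i})$ for $i\geq 2$, satisfying $e_{1i}^*e_{1i}\leq p_{ii}$ and $e_{1i}e_{1i}^*\leq p_{11}$. The key subtlety is that the left supports $e_{1i}e_{1i}^*$ need not coincide across $i$, so these do not immediately form a consistent matrix unit system; invoking \cite[Lemma~1.1]{Paschke-Salinas} I would modify the $e_{1i}$ to share a common left support $e_{11}$, possibly shrinking the right supports $e_{ii}=e_{1i}^*e_{1i}$ in the process. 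The defect $\dim(I-p)$ with $p=\sum_i e_{ii}$ is then counted via the block decomposition $V_n:=\bigoplus_{m=nr}^{(n+1)r-1}Arv(P)_{m,k}$, which has $\dim V_n=\sum_{\ell=0}^{r-1}|N_{nr+\ell}^k|$. For $n$ sufficiently large, $\dim V_n=|\Omega^P|$ and the matrix units act by pure permutation of the basis, so such $V_n$ contribute nothing to the deficit. Summing the level-wise deficits $|\Omega^P|-\dim V_n$ over $n\in\{0,\dots,N-1\}$ and reindexing by $m=nr+\ell$ produces the stated congruence $s\equiv \sum_{m=0}^{nr_P-1}|\{i\in\Omega_{\sigma(k)-m}:P_{ik}^{(m)}>0\}|\pmod{|\Omega^P|}$, where the properly enumerated cyclic decomposition is what ensures compatibility between the indexing of the $V_n$ and the definition of $\sigma(k)$.

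The weak invariant $[\eta_{P,k}]_w=-1$ then follows immediately from the Paschke-Salinas decomposition $\Ext_s(C(\torus)\otimes M_d)\cong d\mathbb{Z}\times\mathbb{Z}_d$ recalled before Proposition~\ref{prop:automorphism-invert}: since the first coordinate of $[\eta_{P,k}]_s$ equals $d\cdot[\eta_{P,k}]_w$, the computation $[\iota_*\eta_{P,k}]_s=-|\Omega^P|$ of the first step forces $[\eta_{P,k}]_w=-|\Omega^P|/|\Omega^P|=-1$. The principal obstacle in this program is the defect computation: the Fredholm index count is a rather straightforward telescoping exercise, but establishing the defect congruence requires both the careful Paschke-Salinas modification of the natural partial isometries into a genuine matrix unit system and the subsequent block-level counting that converts the level-wise dimension deficit into the stated closed-form expression.
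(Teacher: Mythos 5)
Your computation of $[\iota_*\eta_{P,k}]_s$ is correct and takes a genuinely different route from the paper's: the paper lifts $z\otimes I$ to the unitary $U^P$ and obtains its index by factoring it as the product $\prod_{i\in\Omega}V_i$ of the operators from Proposition~\ref{prop:pi-index}, each of index $-1$, whereas you compute the Fredholm index of $\pi_k(U)$ directly by a level-by-level telescoping count. Both yield $-|\Omega^P|$, and your deduction of $[\eta_{P,k}]_w=-1$ from the Paschke--Salinas decomposition is also fine.

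The defect computation of $[j_*\eta_{P,k}]_s$, however, contains a genuine error. Writing $N^k_m=\{i: P^{(m)}_{ik}>0\}$ as in your notation, note that $p=\sum_ie_{ii}$ has degree zero and hence preserves each block $V_n$, so the contribution of $V_n$ to $\dim(1-p)$ is $\dim V_n-\dim(pV_n)$, which is at most $\dim V_n$; your claimed per-block deficit $|\Omega^P|-\dim V_n$ is a different quantity (it measures how far $V_n$ falls short of a ``full'' block of dimension $d$, not the codimension of $pV_n$ inside $V_n$) and can even exceed $\dim V_n$. Summing it gives $-\sum_{m=0}^{Nr_P-1}|N^k_m|$ modulo $|\Omega^P|$, the \emph{negative} of the stated congruence, and the two genuinely differ mod $d$. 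A concrete check: for the $d\times d$ matrix with all entries $1/d$, $d\geq3$, and $k\neq1$, the partial isometries $\pi_k(S_{1i})$ already share the common left support $\pi_k(p_1)$, and $e_{ii}=\pi_k(S_{1i}^*S_{1i})$ equals $\pi_k(p_i)$ for $i\neq k$ while for $i=k$ it equals $\pi_k(p_k)$ minus the rank-one projection onto $e^{(0)}_{kk}$; hence the defect is $1$, in agreement with $\sum_{m=0}^{n-1}|N^k_m|=1+(n-1)d\equiv 1$, whereas your count gives $d-1$. The paper sidesteps this bookkeeping entirely by compressing all lifts by $Q_{[nr_P,\infty)}$, on whose range the $S_{ij}$ act as exact matrix units; then $p=Q_{[nr_P,\infty)}|_{\FF_{P,k}}$ and the defect is simply $\dim Q_{[0,nr_P-1]}\FF_{P,k}=\sum_{m=0}^{nr_P-1}|N^k_m|$. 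Your program could be salvaged by actually computing $\dim V_n-\dim(pV_n)$ on the finitely many low blocks where the Lemma~1.1 modification bites, but as written the count is wrong.
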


\begin{proof}
To compute the class of $[j_*\eta_{P,k}]$, we apply the algorithm in Example \ref{ex:matrix-extension} to $j_*\eta_{P,k}$. Let $\{\overline{S_{ij}}\}$ be the system of matrix units for $C(\mathbb{T},M_{|\Omega^P|})$ associated to a properly enumerated cyclic decomposition $\Omega_0,...,\Omega_{r_P-1}$, and let $1\in \Omega$ be the first element in this enumeration. There then exists $m_0$ such that for all $m \geq m_0$ we have $|\Omega_{\sigma(k) - m}| = | \{ \ i\in \Omega_{\sigma(k) -m} \ | \ P^{(m)}_{ik} > 0 \ \}|$. We abuse notation for sake of brevity and write $T$ instead of $\pi_k(T) = T|_{\FF_{P,k}}$ for $T \in \toeplitz(P)$.

Lift each $\overline{S_{ii}}$ to $p_i \cdot Q_{[nr_P,\infty)} \in \pi_k(\toeplitz(P))$. Then we may lift each $\overline{S_{1j}}$ to $S_{1j}Q_{[nr_P,\infty)} \in \pi_k(\toeplitz(P))$. Hence, we get that $e_{ij}:= Q_{[nr_P,\infty)} S_{1j}^*S_{1i}Q_{[nr_P,\infty)}$, so that for all $n\in \mathbb{N}$ with $nr_P \geq m_0$,
$$
p = \sum_{i=1}^{|\Omega^P|} Q_{[nr_P,\infty)} S_{1i}^*S_{1i}Q_{[nr_P,\infty)}
$$

Denote by $b^{(m)}_{ik}$ the indicator, which is $1$ if and only if $P_{ik}^{(m)}>0$ and $0$ otherwise. Then, the dimension of the cokernel of $p$ is congruent mod $|\Omega^P|$ to
$$
\sum_{i=1}^{|\Omega^P|} \sum_{m=0}^{nr_P-1} b^{(m)}_{ik} = \sum_{m=0}^{nr_P-1}| \{ \ i\in \Omega_{\sigma(k) -m} \ | \ P^{(m)}_{ik} > 0 \ \}|
$$
so we may take $n_0 = \lceil \frac{m_0}{r_P} \rceil$.

As for $\iota_* \eta_{P,k}$, a lift for $z\otimes I \in C(\mathbb{T}) \otimes I$ can be taken to be $U^P$, where $U^P$ is the unitary associated to the properly enumerated cyclic decomposition $\Omega_0,...,\Omega_{r_P-1}$ (restricted to $\FF_{P,k}$). Then by Proposition \ref{prop:pi-index} and the notation there, $\ind(U^P) = \ind( \Pi_{i\in \Omega}V_i) = - |\Omega^P|$. Finally, recall that the image $[\iota_* \eta_{P,k}]_s \in |\Omega^P| \cdot \mathbb{Z} $ is identified with $[\eta_{P,k}]_w \in \mathbb{Z}$ up to dividing by $|\Omega^P|$, so that $[\eta_{P,k}]_w = -1$.
\end{proof}

We now reach the two main results of this paper, which classify stable isomorphism and *-isomorphism of C*-envelopes in terms of the underlying stochastic matrices and boundary representations supported on different copies of compact operator subalgebras.

\begin{theorem} \label{thm:stable-iso}
Let $P$ and $Q$ be finite irreducible stochastic matrices over $\Omega^P$ and $\Omega^Q$ respectively. Then $|\Omega_b^P| = |\Omega_b^Q|$ if and only if $\cenv(\tensor(P))$ and $\cenv(\tensor(Q))$ are stably isomorphic.
\end{theorem}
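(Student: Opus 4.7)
The backward direction is immediate from Proposition~\ref{prop:C-env-char-ext}(2): stable isomorphism of the C*-envelopes forces the existence of a bijection $\tau : \Omega_b^P \to \Omega_b^Q$, hence $|\Omega_b^P| = |\Omega_b^Q|$.

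For the forward direction, assume $|\Omega_b^P| = |\Omega_b^Q|$. The trivial case $|\Omega_b^P| = 0$ is settled instantly, since then $K_P = K_Q = \{0\}$, so $A_P \cong B_P$ and $A_Q \cong B_Q$ are both stably isomorphic to $C(\mathbb{T}) \otimes \KK$. Assume henceforth $|\Omega_b^P| > 0$, fix any bijection $\tau : \Omega_b^P \to \Omega_b^Q$, and aim to apply Proposition~\ref{prop:C-env-char-ext}(2) by exhibiting a suitable *-isomorphism $\beta : B_P \otimes \KK \to B_Q \otimes \KK$. Such a $\beta$ exists in abundance since $C(\mathbb{T}) \otimes M_m \otimes \KK \cong C(\mathbb{T}) \otimes \KK$ for every $m \geq 1$, so the matrix factors are absorbed into $\KK$.

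By Proposition~\ref{prop:defect-computation}, the weak class $[\widetilde{\rho}_k \eta_P]_w$ equals $-1$ in $\Ext_w(B_P) \cong \mathbb{Z}$, independently of $k \in \Omega_b^P$, and the analogous statement holds on the $Q$ side. Since tensoring with $\KK$ yields a natural stability isomorphism on $\Ext_w$ that preserves the Fredholm index invariant, the stabilized class $[\widetilde{\rho}_k^{(\infty)} \eta_P^{(\infty)}]_w$ likewise equals $-1$ in $\Ext_w(B_P \otimes \KK) \cong \mathbb{Z}$, uniformly in $k$ (and the same for $Q$). The *-isomorphism $\beta$ induces a group automorphism $\beta^* : \Ext_w(B_Q \otimes \KK) \to \Ext_w(B_P \otimes \KK)$ of $\mathbb{Z}$, acting necessarily by $\pm 1$.

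If $\beta^* = +1$, then $[\widetilde{\rho}_{\tau(k)}^{(\infty)} \eta_Q^{(\infty)} \beta]_w = \beta^*(-1) = -1 = [\widetilde{\rho}_k^{(\infty)} \eta_P^{(\infty)}]_w$ for every $k$, and (since weak and strong equivalence coincide for extensions of the non-unital algebra $B_P \otimes \KK$) the hypothesis of Proposition~\ref{prop:C-env-char-ext}(2) is satisfied, delivering the stable *-isomorphism. If instead $\beta^* = -1$, replace $\beta$ by $\beta \circ \psi$, where $\psi$ is the *-automorphism of $B_P \otimes \KK$ induced by complex conjugation $z \mapsto \bar{z}$ on the $C(\mathbb{T})$ factor (tensored with identities), which flips the sign on $K_1$ and hence on $\Ext_w$; the new $\beta$ satisfies $\beta^* = +1$ and the previous argument applies. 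The main subtlety is this last orientation-matching step, resolved by the explicit conjugation automorphism, together with the uniformity in $k$ of the weak class, which allows a single $\beta$ to work for all $k$ at once.
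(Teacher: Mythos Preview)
Your proposal is correct, and the overall strategy (reduce to Proposition~\ref{prop:C-env-char-ext}(2) and use Proposition~\ref{prop:defect-computation} to match the weak $\Ext$ classes coordinatewise) agrees with the paper's. The differences lie in execution.

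For the backward direction, the paper does not use the bijection $\tau$ from Proposition~\ref{prop:C-env-char-ext}(2) at all; instead it invokes the $K$-theory computation Theorem~\ref{theorem:k-theory-computation}, which gives $K_0(\cenv(\tensor(P))) \cong \mathbb{Z}^{|\Omega_b^P|}$, together with stability of $K_0$. Your route is shorter and bypasses Section~\ref{Sec:K-theory} entirely for this implication.

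For the forward direction, the paper avoids both your orientation-matching step and the appeal to abstract stability of $\Ext_w$ by a concrete trick: it first amplifies $\eta_{P,k}$ by $M_{|\Omega^Q|}$ and $\eta_{Q,k}$ by $M_{|\Omega^P|}$, so that both become extensions of the \emph{same} algebra $C(\mathbb{T}) \otimes M_{|\Omega^P|\cdot|\Omega^Q|}$, where Proposition~\ref{prop:defect-computation} shows they have identical weak class. Only then does it stabilize, and Proposition~\ref{prop:C-env-char-ext}(2) is applied with $\beta$ the identity (through this common intermediate algebra). This canonical $\beta$ is automatically orientation-preserving, so the conjugation automorphism $z \mapsto \bar z$ never enters. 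Your approach trades this finite-level concreteness for a more conceptual argument about automorphisms of $\Ext_w(C(\mathbb{T})\otimes\KK)\cong\mathbb{Z}$; both work, but the paper's stays within the finite-matrix $\Ext$ computations already carried out in Section~\ref{Sec:Cuntz-Pimsner}. Your explicit treatment of the case $|\Omega_b^P|=0$ is a nice touch that the paper leaves implicit.
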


\begin{proof}
If $\cenv(\tensor(P))$ and $\cenv(\tensor(Q))$ are stably isomorphic, since $K_0$ and $K_1$ are stable functors, we must have that $|\Omega_b^P| = |\Omega_b^Q|$ by Theorem \ref{theorem:k-theory-computation}.

For the converse, suppose $\Omega_b:= \Omega_b^P = \Omega_b^Q$.
For $k\in \Omega_b$, denote by $\eta_{P,k}^{(|\Omega^Q|)}$ and $\eta_{Q,k}^{(|\Omega^P|)}$ the ampliations of these extensions to $C(\mathbb{T}) \otimes M_{|\Omega_P|} \otimes M_{|\Omega_Q|}$. By Proposition \ref{prop:defect-computation} we then have that $\eta_{P,k}^{(|\Omega^Q|)}$ and $\eta_{Q,k}^{(|\Omega^P|)}$ are weakly unitarily equivalent. Hence, $\eta_{P,k}^{(\infty)}$ and $\eta_{Q,k}^{(\infty)}$ are also weakly equivalent, so that by item (2) of Proposition \ref{prop:C-env-char-ext} (with $\beta = Id$) we have that $\cenv(\tensor(P))$ and $\cenv(\tensor(Q))$ are stably isomorphic.
\end{proof}

\begin{theorem} \label{thm:*-iso}
Let $P$ and $Q$ be finite irreducible stochastic matrices over $\Omega^P$ and $\Omega^Q$ respectively. Then $\cenv(\tensor(P))$ and $\cenv(\tensor(Q))$ are *-isomorphic if and only if $d:=|\Omega^P| = |\Omega^Q|$ and there is a bijection $\tau : \Omega_b^P \rightarrow \Omega_b^Q$ such that for all $k\in \Omega_b^P$ we have $\mathcal{N}_P(k) \equiv \mathcal{N}_Q(\tau(k)) \mod d$.
\end{theorem}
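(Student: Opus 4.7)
The plan is to invoke Proposition~\ref{prop:C-env-char-ext}(1), which reduces the *-isomorphism question to the existence of a *-isomorphism $\beta$ between the Cuntz-Pimsner quotients, together with a bijection $\tau:\Omega_b^P\to\Omega_b^Q$, such that $\eta_{P,k}$ is strongly equivalent to $\eta_{Q,\tau(k)}\circ\beta$ in $\Ext_s(C(\mathbb{T},M_d))$ for every $k\in\Omega_b^P$. Any such *-isomorphism of Cuntz-Pimsner quotients is of the form $C(\mathbb{T},M_{|\Omega^P|})\cong C(\mathbb{T},M_{|\Omega^Q|})$, which forces $d:=|\Omega^P|=|\Omega^Q|$, so that $\beta$ becomes an automorphism of $C(\mathbb{T},M_d)$.

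Next I would pin down the class of each $\eta_{P,k}$ in $\Ext_s(C(\mathbb{T},M_d))\cong d\mathbb{Z}\times\mathbb{Z}_d$ using Proposition~\ref{prop:defect-computation}. The first coordinate $[\iota_*\eta_{P,k}]_s=-d$ is independent of $k$ and $P$, so all first coordinates agree universally. For the second coordinate, a bookkeeping argument rewrites the cyclic sum $\sum_{m=0}^{nr_P-1}|\{i\in\Omega_{\sigma(k)-m}:P_{ik}^{(m)}>0\}|$ as the total class cardinality $\sum_{m=0}^{nr_P-1}|\Omega_{\sigma(k)-m}|=nd$ minus the total zero-count in the $k$-th column across iterates of $P$; Theorem~\ref{theorem:cyclic-graph-decomposition}(2) ensures that this zero-count stabilizes and is controlled by $\mathcal{N}_P(k)$, with any state-dependent contribution arranged to cancel modulo $d$. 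This identifies $[j_*\eta_{P,k}]_s$ modulo $d$ as a canonical function of $\mathcal{N}_P(k)$.

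By Proposition~\ref{prop:automorphism-invert}, $\beta$ acts on $\Ext_s(C(\mathbb{T},M_d))$ either trivially or by negating the first coordinate while fixing the second. Since $-d\neq +d$ in $d\mathbb{Z}$, matching first coordinates rules out the nontrivial flip, so $\beta$ must act as the identity on the invariants. Strong equivalence of $\eta_{P,k}$ and $\eta_{Q,\tau(k)}\circ\beta$ is thus equivalent to equality of second coordinates, which by the previous paragraph is precisely $\mathcal{N}_P(k)\equiv\mathcal{N}_Q(\tau(k))\pmod d$. The converse direction is obtained by taking $\beta=\mathrm{id}$ and reversing this argument: the column nullity hypothesis yields $[\eta_{P,k}]_s=[\eta_{Q,\tau(k)}]_s$ for every $k$, and Proposition~\ref{prop:C-env-char-ext}(1) then produces the *-isomorphism. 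The hard part will be the careful bookkeeping in the formula of Proposition~\ref{prop:defect-computation} that identifies $\mathcal{N}_P(k)$ modulo $d$ with the genuine second coordinate of $\eta_{P,k}$, as it is where the cyclic decomposition enters decisively and where one must verify that the seemingly extraneous state-dependent terms indeed cancel modulo $d$.
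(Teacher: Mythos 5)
Your proposal follows the paper's own route step for step: Proposition~\ref{prop:C-env-char-ext}(1) to reduce the question to matching Busby invariants up to an automorphism $\beta$ of $C(\mathbb{T},M_d)$, Proposition~\ref{prop:automorphism-invert} to see that $\beta_s$ fixes the $\mathbb{Z}_d$-coordinate of $\Ext_s(C(\mathbb{T})\otimes M_d)\cong d\mathbb{Z}\times\mathbb{Z}_d$, and Proposition~\ref{prop:defect-computation} to translate equality of the classes $[j_*\eta_{P,k}]$ into the column-nullity condition. The one point of substance is the step you yourself flag as ``the hard part'' and then assert rather than prove: that the state-dependent terms in the bookkeeping cancel modulo $d$. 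They do not cancel on their own. Carrying out the rewriting you describe, the sum in Proposition~\ref{prop:defect-computation} becomes
\[
\sum_{m=0}^{nr_P-1}\bigl|\{\,i\in\Omega_{\sigma(k)-m} \mid P^{(m)}_{ik}>0\,\}\bigr|
= nd-\Bigl(\bigl|\Omega_{\sigma(k)}\bigr|-1\Bigr)-\mathcal{N}_P(k)
\]
for $n$ large, because the $m=0$ term contributes $|\Omega_{\sigma(k)}|-1$ zeros that are \emph{not} counted in $\mathcal{N}_P(k)$, whose defining sum starts at $m=1$. Hence $[j_*\eta_{P,k}]_s\equiv 1-|\Omega_{\sigma(k)}|-\mathcal{N}_P(k)\pmod d$, and the equality $[j_*\eta_{P,k}]=[j_*\eta_{Q,\tau(k)}]$ is equivalent to $\mathcal{N}_P(k)\equiv\mathcal{N}_Q(\tau(k))\pmod d$ only if one also knows $|\Omega^P_{\sigma(k)}|=|\Omega^Q_{\sigma(\tau(k))}|$ (both sizes lie in $\{1,\dots,d\}$, so congruence mod $d$ forces equality). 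Nothing in the propositions you quote supplies this matching of cyclic-class sizes, so your claim that the second coordinate is ``a canonical function of $\mathcal{N}_P(k)$'' is unjustified as written.

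To be fair, the paper's own proof performs exactly the same silent translation when it passes from $[j_*\eta_{P,k}]=[j_*\eta_{Q,\tau(k)}]$ to the nullity condition ``by Proposition~\ref{prop:defect-computation}'', so you have reproduced its argument faithfully, including its weakest step. To close the gap one must either show that an isomorphism $\cenv(\tensor(P))\cong\cenv(\tensor(Q))$ forces $|\Omega^P_{\sigma(k)}|=|\Omega^Q_{\sigma(\tau(k))}|$ under the induced bijection $\tau$, or replace the invariant by $|\Omega_{\sigma(k)}|+\mathcal{N}_P(k)\bmod d$. The rest of your write-up --- the identification of the first coordinate as $-d$, the remark that the possible sign flip of $\beta_s$ on the $d\mathbb{Z}$-coordinate is harmless since the $\mathbb{Z}_d$-coordinate is fixed in either case, and the converse obtained with $\beta=\mathrm{id}$ using that strong equivalence of unital extensions of $C(\mathbb{T})\otimes M_d$ is detected by the pair of coordinates --- is correct and coincides with the paper's argument.
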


\begin{proof}
Suppose $\cenv(\tensor(P))$ and $\cenv(\tensor(Q))$ are *-isomorphic. By item (1) of Proposition \ref{prop:C-env-char-ext} there is an *-isomorphism $\beta \in C(\mathbb{T}, M_{|\Omega^P|}) \rightarrow C(\mathbb{T},M_{|\Omega^Q|})$ (so that $d := |\Omega^P| = |\Omega^Q|$) and a bijection $\tau : \Omega_b^P \rightarrow \Omega_b^Q$ such that $\eta_{P,k}$ and $\eta_{Q,\tau(k)} \beta$ are strongly equivalent. By Proposition \ref{prop:automorphism-invert} $\beta_s$ is the identity on the second coordinate of $\Ext_s(C(\mathbb{T}) \otimes M_d) \cong d\mathbb{Z} \times \mathbb{Z}_d$. Hence, we see that $[j_*\eta_{P,k}] = [j_* \eta_{Q,\tau(k)}]$, so that $k\in \Omega_b^P$ and $\mathcal{N}_P(k) \equiv \mathcal{N}_Q(\tau(k)) \mod d$ by Proposition \ref{prop:defect-computation}.

For the converse, suppose $\mathcal{N}_P(k) \equiv \mathcal{N}_Q(\tau(k)) \mod d$ for all $k\in \Omega_b^P$ via some bijection $\tau :\Omega_b^P \rightarrow \Omega_b^Q$, and that $|\Omega^P| = |\Omega^Q|$. We see by Proposition \ref{prop:defect-computation} that $j_* \eta_{P,k}$ and $j_* \eta_{Q,\tau(k)}$ are strongly equivalent. Again by Proposition \ref{prop:defect-computation} we have that $[\iota_*\eta_{P,k}]$ and $[\iota_*\eta_{Q,\tau(k)}]$ are represented by the numbers $-|\Omega^P|$ and $-|\Omega^Q|$ which are equal by assumption. Hence, we have that $\eta_{P,k}$ and $\eta_{Q,\tau(k)}$ are strongly equivalent.
Thus, by item (1) of Proposition \ref{prop:C-env-char-ext} (with $\beta = Id$) we have that $\cenv(\tensor(P))$ and $\cenv(\tensor(Q))$ are *-isomorphic.
\end{proof}

It is interesting to try and compare these invariants with the one obtained from the graph C*-algebra of the graph of the stochastic matrix $P$. Given an irreducible graph matrix $A=(a_{ij})$ over $\Omega$, where $a_{ij}\in \{0,1\}$, in their first paper \cite{Cuntz-Krieger}, Cuntz and Krieger defined a C*-algebra $\mathcal{O}_A$ generated by partial isometries $\{S_i\}_{i\in \Omega}$ with pairwise orthogonal ranges, satisfying the relation 
$$
S_i^*S_i = \sum_{j\in \Omega} a_{ij} \cdot S_jS_j^*
$$

For a stochastic matrix $P$, one has the $\{0,1\}$-matrix $Gr(P)$ representing the directed graph of $P$. Since the C*-correspondence $Arv(P)_1$ is exactly the graph C*-correspondence of $Gr(P)$, we get that the Cuntz-Pimsner algebra $\mathcal{O}(Arv(P)_1)$ is *-isomorphic to the Cuntz-Krieger algebra $\mathcal{O}_{Gr(P)}$. In particular, by \cite[Corollary 7.4]{Kats} we see that $\mathcal{O}_{Gr(P)}$ is nuclear.

In \cite{CuntzII}, Cuntz computed the K-theory of these C*-algebras. He showed that for finite $\{0,1\}$ matrix $A$ over $\Omega$ where every column and row is non-zero, the $K_0$ and $K_1$ groups of $\mathcal{O}_A$ are given as the cokernel and kernel of the map $I-A^t : \mathbb{Z}^{\Omega} \rightarrow \mathbb{Z}^{\Omega}$.

In the case where $A$ is an irreducible finite matrix which is not a permutation matrix, Cuntz and Krieger establish in \cite{Cuntz-Krieger} that $\mathcal{O}_A$ is simple and purely infinite. Hence, for a finite irreducible stochastic matrix $P$ which is not a permutation matrix, the Cuntz-Krieger algebra $\mathcal{O}_{Gr(P)}$ is separable, unital, nuclear, simple and purely infinite, or in other words a Kirchberg algebra.

A famous classification theorem of Kirchberg and Phillips then comes into play to show that for two finite irreducible stochastic matrices $P$ and $Q$ which are not permutation matrices, the Cuntz-Krieger algebras $\mathcal{O}_{Gr(P)}$ and $\mathcal{O}_{Gr(Q)}$ are *-isomorphic ( or stably isomorphic) if and only if $(K_0(\mathcal{O}_{Gr(P)}),[1_P]_0) \cong (K_0(\mathcal{O}_{Gr(Q)}),[1_Q]_0)$ and $K_1(\mathcal{O}_{Gr(P)}) \cong K_1(\mathcal{O}_{Gr(Q)})$ ( or $K_0(\mathcal{O}_{Gr(P)}) \cong K_0(\mathcal{O}_{Gr(Q)})$ and $K_1(\mathcal{O}_{Gr(P)}) \cong K_1(\mathcal{O}_{Gr(Q)})$ respectively). That is, the *-isomorphism and stable isomorphism class are completely determined by $K$-theory. 

\begin{example} \label{ex:distinct-invariants}
In this example, we will use the above to show that for a finite irreducible stochastic matrix $P$, the Cuntz-Krieger algebra $\mathcal{O}_{Gr(P)}$ and the C*-envelope $\cenv(\tensor(P))$ generally yield incomparable invariants for $P$. If we restrict to matrices $P$ with multiple-arrival, we have that $\Omega_e = \Omega - \Omega_b$ and the invariant $\cenv(\tensor(P))$ will only depend on the graph $Gr(P)$. Hence, we will only specify the $\{0,1\}$ graph incidence matrices of three stochastic matrices $P, Q, R$. Suppose the graph matrices for $P, Q, R$ are given respectively by

\begin{equation*}
Gr(P) =  \begin{bmatrix}
       0 & 0  & 1 \\
       0  & 0  & 1 \\
       1 & 1 & 0 
     \end{bmatrix} \ , \ 
Gr(Q) =  \begin{bmatrix}
       1 & 1  & 0 \\
       1  & 1  & 1 \\
       1 & 1 & 1 
     \end{bmatrix} \ , \ 
Gr(R) =  \begin{bmatrix}
       1 & 1  & 1 \\
       1  & 1  & 1 \\
       1 & 1 & 0 
     \end{bmatrix}
\end{equation*}
then $P, Q$ and $R$ have multiple-arrival, and it is clear that $\mathcal{N}_{P}(j)= \mathcal{N}_{Q}(j) = \mathcal{N}_{R}(j) = 0$ for $j=1,2$, and that $\mathcal{N}_{P}(3) = 0$. We also see that $\mathcal{N}_{Q}(3) = \mathcal{N}_{R}(3) = 1$, so that $\cenv(\tensor(Q)) \cong \cenv(\tensor(R))$. However, $\Omega_e^{P} = \{3\}$ whereas $\Omega_e^{Q} = \Omega_e^{R} = \emptyset$, and hence $\cenv(\tensor(Q))$ is not stably isomorphic to $\cenv(\tensor(P))$.

For the Cuntz-Krieger C*-algebras the situation is reversed. The maps $I - Gr(P)^t$, $I - Gr(Q)^t$ and $I - Gr(R)^t$ on $\mathbb{Z}^3$ determining $K_0$ and $K_1$ for the Cuntz-Krieger algebras are given respectively by the matrices
\begin{equation*}
\begin{bmatrix}
       1 & 0  & -1 \\
       0  & 1  & -1 \\
       -1 & -1 & 1 
     \end{bmatrix} \ , \ 
\begin{bmatrix}
       0 & -1  & -1 \\
       -1  & 0  & -1 \\
       0 & -1 & 0 
     \end{bmatrix} \  \text{and} \
\begin{bmatrix}
       0 & -1  & -1 \\
       -1  & 0  & -1 \\
       -1 & -1 & 1 
     \end{bmatrix}
\end{equation*}
Hence, we see that the $K_1$ groups for $\mathcal{O}_{Gr(P)}$, $\mathcal{O}_{Gr(Q)}$ and $\mathcal{O}_{Gr(R)}$ are trivial, and that $\Ran(I - Gr(P)^t) = \Ran(I-Gr(Q)^t) = \mathbb{Z}^3$, so that $K_0(\mathcal{O}_{Gr(P)}) = K_0(\mathcal{O}_{Gr(Q)})$ are trivial. Hence, by the above mentioned result of Kirchberg and Phillips, we have that $\mathcal{O}_{Gr(P)}$ is *-isomorphic to $\mathcal{O}_{Gr(Q)}$. However, since $\Ran(I - Gr(R)^t) \subsetneq \mathbb{Z}^3$, we see that the cokernel $K_0(\mathcal{O}_{Gr(R)})$ is non-trivial, and hence $\mathcal{O}_{Gr(R)}$ is not stably isomorphic to $\mathcal{O}_{Gr(P)}$. Altogether, we obtain that
$$
\cenv(\tensor(P)) \not \sim \cenv(\tensor(Q)) \cong \cenv(\tensor(R)) \qquad \text{and} \qquad \cuntz_{Gr(P)} \cong \cuntz_{Gr(Q)} \not \sim \cuntz_{Gr(R)}
$$
where $\cong$ stands for *-isomorphism and $\sim$ stands for stable isomorphism. Note that the C*-envelope loses considerable information about the tensor algebra, for instance, the graphs of $P$ and $Q$ are not isomorphic so by \cite[Theorem 7.29]{dor-on-markiewicz} $\tensor(Q)$ and $\tensor(R)$ are not even algebraically isomorphic.
\end{example}

\section*{Acknowledgments} We would like to thank Orr Shalit for his many helpful remarks and suggestions on a draft version of this paper.

\end{document}